\newtheorem{thm}{Theorem}[section]
\newtheorem{lem}[thm]{Lemma}
\newtheorem{pro}[thm]{Proposition}
\newtheorem{defn}[thm]{Definition}
\theoremstyle{definition}
\newcommand{\N}{\mathbb N}
\newcommand{\R}{\mathbb R}
\renewcommand{\u}{\mathbf u}
\renewcommand{\l}{\lambda}
\newcommand{\F}{\mathbf F}
\newcommand{\U}{\mathcal U}
\newcommand{\e}{\epsilon}
\newcommand{\0}{\mathbf 0}
\newcommand{\tr}{\operatorname{tr}}
\newcommand{\rank}{\operatorname{rank}}
\newcommand{\supp}{\operatorname{supp}}
\newcommand{\dist}{\operatorname{dist}}
\newcommand{\cof}{\operatorname{cof}}
\newcommand{\wcon}{\rightharpoonup}
\newcommand{\imply}{\Longrightarrow}
\numberwithin{equation}{section}
\begin{document}

\author{Seonghak Kim}
\address{Department of Mathematics\\ College of Natural Sciences\\
Kyungpook National University\\
Daegu 41566, Republic of Korea}
\email{shkim17@knu.ac.kr}

 \author{Baisheng Yan}
 \address{Department of Mathematics\\ Michigan State University\\ East Lansing, MI 48824, USA}
   \email{yanb@msu.edu}

\title[Integral convexity, variational solutions  and  semigroups]{On integral convexity, variational solutions\\  and  nonlinear semigroups}

\subjclass[2010]{34G25, 35K87,  47H05, 47H20, 49J45}
\keywords{Integral convexity, variational solutions,  semigroups,  monotonicity conditions}

 \begin{abstract}
 In this paper we provide a different approach for  existence of the variational solutions of the gradient flows associated to  functionals on Sobolev spaces studied in \cite{BDDMS20}. The crucial condition is  the convexity of the functional under which we show that  the variational solutions coincide with the solutions generated by the nonlinear semigroup associated to the functional. For integral functionals of the form  $\mathbf F(u)=\int_\Omega f(x,Du(x)) dx,$ where $f(x,\xi)$ is  $C^1$ in $\xi$,  we also make some remarks on  the connections between convexity of $\F$  (called the integral convexity of $f$) and  certain  monotonicity conditions of the gradient map $D_\xi f.$ In particular, we provide an example to show that even for functions of the simple form $f=f(\xi)$, the usual quasimonotonicity of $D_\xi f$  is not  sufficient  for the integral convexity of $f.$
    \end{abstract}

\maketitle

\section{Introduction}

Let $n,N\ge 1$ be integers and denote by $\R^{N\times n}$  the usual Euclidean space of $N\times n$ real matrices with   inner product
$
\xi:\eta=\tr(\xi^T\eta)
$ and norm $|\xi|=(\xi:\xi)^{1/2}$ for all $\xi,\eta\in \R^{N\times n}.$

We  consider the  variational integral functional
\begin{equation}\label{fun-1}
\mathbf F(u)=\int_\Omega f(x,u(x),Du(x))\,dx,
 \end{equation}
where  $\Omega$ is a bounded domain in $\R^n$, $u=(u^1,\cdots,u^N)\colon \Omega \to\R^N$ is a vector-valued function belonging to some Sobolev space and $f\colon \Omega\times \R^N\times \R^{N\times n}\to  \R$ is a  Carath\'eodory function; namely, $f(x,z,\xi)$ is continuous in $(z,\xi)\in \R^N\times \R^{N\times n}$ for almost every  $x\in\Omega$ and measurable in $x\in\Omega$ for all $(z,\xi)\in \R^N\times \R^{N\times n}.$  Here $
 Du(x)=(\frac{\partial u^i}{\partial {x_j}})\in \R^{N\times n}$ denotes the  Jacobian matrix of weak derivatives of $u$ at $x$.

 Let $1\le p\le \infty$ be given. Under certain growth assumptions on function $f(x,z,\xi)$ (not to be specified here), the functional $\F$ will take the extended values in $\bar\R=\R\cup\{+\infty\}$ on the Sobolev space $W^{1,p}(\Omega;\R^N).$   Given a function $u_*\in L^2(\Omega;\R^N)\cap W^{1,p}(\Omega;\R^N)$  satisfying  $\F(u_*)<\infty,$ we  study  the functional $\F(u)$ on the Dirichlet class $u_*+W^{1,p}_0(\Omega;\R^N)$  with boundary datum $u_*.$

 Note that the function $
 \tilde f(x, z, \xi)=f(x,u_*(x)+ z, Du_*(x)+ \xi)
$ is  Carath\'eodory  on $\Omega\times \R^N\times \R^{N\times n}$ and that $\F(u)=\tilde \F(\tilde u),$ where
 $u=u_*+\tilde u$   and  $\tilde \F(\tilde u)$ is the variational integral  functional on $\tilde u\in W^{1,p}_0(\Omega;\R^N)$ defined by function $\tilde  f;$ furthermore, $\tilde \F$ satisfies that $\tilde \F(0)<\infty$ and takes  the extended values in $\bar \R$ on the space $W_0^{1,p}(\Omega;\R^N).$ Therefore, we  may assume the boundary datum $u_*=0$ and study the functional $\F(u)$ on  $W_0^{1,p}(\Omega;\R^N)$ for a general Carath\'eodory function $f(x,z,\xi).$

 In fact, we will start with an abstract functional
\begin{equation}\label{abs-F}
 \F\colon W^{1,p}_0(\Omega;\R^N)\to \bar\R\;\; \mbox{with}\;\; \F(0)<\infty,
 \end{equation}
  instead of the specific integral form (\ref{fun-1}).
   We are interested in  certain  evolution solutions  associated to such a functional $\F$  on the space $L^2(\Omega;\R^N)\cap W_0^{1,p}(\Omega;\R^N).$

 First, we recall the following definition given in \cite[Definition 2.1]{BDDMS20}.

  \begin{defn} \label{v-sol} Let $T>0$, $\Omega_T=\Omega\times (0,T)$ and $u_0\in L^2(\Omega;\R^N).$ A measurable function $u\colon \Omega_T\to\R^N$ in the class
  \[
  C^0([0,T];L^2(\Omega;\R^N))\cap L^p(0,T; W_0^{1,p}(\Omega;\R^N))
  \]
   is called  a  {\bf variational solution} {\rm (of  the gradient flow)} associated to the  functional $\F$ as in (\ref{abs-F}) with initial datum $u(0)=u_0$ provided that the variational inequality
\begin{equation}\label{var-sol}
 \begin{split} \int_0^\tau \F(u)\,dt \le &\int_0^\tau \F(v)\, dt + \iint_{\Omega_\tau}  \partial_t v \cdot (v-u)\,dxdt \\
 &+ \frac12 \|v(\cdot,0)-u_0\|_{L^2(\Omega)}^2 - \frac12 \|v(\cdot,\tau)-u(\cdot,\tau)\|_{L^2(\Omega)}^2\end{split}
 \end{equation}
 holds for all $\tau\in (0,T]$ and  $ v\in  L^p(0,T; W_0^{1,p}(\Omega;\R^N))$ with $\partial_t v\in L^2(\Omega_T;\R^N)$ and $v(0) \in L^2(\Omega;\R^N).$
  \end{defn}

 We now review some known results proved in \cite{BDDMS20}. Let $1<p<\infty.$ It is proved in {\cite[Theorem 6.1]{BDDMS20} that if    $\F$ is coercive and sequentially weakly  lower semicontinuous  on $W_0^{1,p}(\Omega;\R^N)$ and satisfies  that for  every initial datum $u_0\in L^2(\Omega;\R^N)\cap W_0^{1,p}(\Omega;\R^N)$, a  variational solution of the gradient flow associated to the functional $\F$   exists  in the sense of Definition \ref{v-sol}, then $\F$ must be convex on $L^2(\Omega;\R^N)\cap W_0^{1,p}(\Omega;\R^N).$
 Conversely, if $\F$ is  sequentially weakly  lower semicontinuous on $W_0^{1,p}(\Omega;\R^N)$ and  convex on $L^2(\Omega;\R^N)\cap W_0^{1,p}(\Omega;\R^N)$ and satisfies the following conditions:
\begin{eqnarray}
 & \F(u)<\infty \quad   \forall\, u\in C_0^\infty(\Omega;\R^N),\label{ass-0}\\
 &\F(u)\ge \nu \|Du\|_{L^p(\Omega)}^p -L\quad \forall\, u\in W^{1,p}_0(\Omega;\R^N),
\label{strong-coer}
\end{eqnarray}
 where $\nu$ and $L$ are some positive constants,  then it is proved  in  \cite[Theorem 7.3]{BDDMS20}  that     for every initial datum $u_0\in L^2(\Omega;\R^N)$, a variational solution of  the  gradient flow associated to the functional $\F$ exists.  Moreover, the uniqueness of variational solutions is also proved in \cite[Theorem 7.4]{BDDMS20} if $\F$ is coercive and  sequentially weakly  lower semicontinuous on $W_0^{1,p}(\Omega;\R^N)$ and  convex on $L^2(\Omega;\R^N)\cap W_0^{1,p}(\Omega;\R^N)$ and satisfies (\ref{ass-0}).

In this paper, we aim to provide a different approach for  the study of  variational solutions under the crucial convexity assumption of the functional $\F$ on $L^2(\Omega;\R^N)\cap W_0^{1,p}(\Omega;\R^N)$, based on  the {\em nonlinear semigroup} theory (see, e.g., \cite{AMRT, Bar,Br,Ev-b}). We also make some remarks about  the convexity of functional $\F$ on $W_0^{1,p}(\Omega;\R^N)$ when $\F$ is in some special integral forms of (\ref{fun-1}).

For this purpose, we introduce some  sets associated to the functional $\F$ as given in (\ref{abs-F}).  First, let
 \[
  D(\F) =\{u\in L^2(\Omega;\R^N)\cap W_0^{1,p}(\Omega;\R^N): \F(u)<\infty\};
 \]
then, by (\ref{abs-F}),    $D(\F)\ne\emptyset$ as $0\in D(\F).$  Now, given  $u\in D(\F),$ define
 \[
\mathcal F(u)  =\{v\in L^2(\Omega;\R^N): \F(w)\ge \F(u)+(v,w-u)_{L^2} \;\;\forall\, w\in D(\F)\}
 \]
 to be the {\em $L^2$-subdifferential set} of $\F$ at $u$ and
  \[
\mathcal G(u)  =\left \{v\in L^2(\Omega;\R^N):  \lim_{\tau\to 0} \frac{ \F(u+\tau w)-\F(u)}{\tau} =(v,w)_{L^2} \;\; \forall\,  w\in D(\F)\right \}
 \]
 to be  the {\em $L^2$-gradient set} of $\F$ at $u.$ Finally define
\[
 D(\mathcal F) =\{u\in D(\F): \mathcal F(u)\ne \emptyset\}\;\;\mbox{and}\;\;  D(\mathcal G) =\{u\in D(\F): \mathcal G(u)\ne \emptyset\}.
 \]
  All these sets are considered as  subsets  of the Hilbert space $H=L^2(\Omega;\R^N)$, where the closure of a set $S$ in $H$ will be denoted by $\bar S.$

We remark that both the evolution inclusions
\[
u'(t)\in - \mathcal F (u(t)) \quad \mbox{and}\quad u'(t)\in -  \mathcal G (u(t))
\]
are usually called  an {\em  $L^2$-gradient flow} associated to the  functional $\F.$  In general,  a gradient flow associated to a given functional on a space depends heavily on the structure of the space and notion of the gradient used; we refer to \cite{AGS, San} for more expositions on the general gradient flows. In this paper, we shall focus only on  the gradient flow defined by the subdifferential set $\mathcal F(u).$ We refer to \cite{KY1,KY2, Ya1} for studies on weak solutions to the $L^2$-gradient flow $u'(t)\in -  \mathcal G (u(t))$  for certain nonconvex functionals using the convex integration method.

Our  key  observation   is  the following theorem that will be derived from the nonlinear semigroup theory.

 \begin{thm}\label{main-1} Let $1<p<\infty.$ Assume that $\F$ is coercive and lower semicontinuous on $W_0^{1,p}(\Omega;\R^N)$ and convex on $L^2(\Omega;\R^N)\cap W_0^{1,p}(\Omega;\R^N).$
 Then for each $u_0\in \overline{D(\F)},$  there exists a unique function $u\in C^0([0,\infty);L^2(\Omega;\R^N))$ with
 \[
 u'\in L^{\infty}(\delta,\infty;L^2(\Omega;\R^N))\quad\forall\,\delta>0
 \]
 and $u(t)\in D(\mathcal{F})$ for all $t>0$
that solves the Cauchy problem of gradient flow:
\begin{equation}\label{CP-0}
 \begin{cases}  u'(t)\in -\mathcal F(u(t)), \quad \mbox{a.e.\;$t>0$},\\
 u(0)=u_0.
 \end{cases}
\end{equation}

Moreover, the  solution  $u$ satisfies the following properties:
\begin{itemize}
\item[(i)] $\F(u)\in L^1(0,T)$ for all $T>0.$
\item[(ii)] $\F(u):(0,\infty)\to\R$ is nonincreasing.
\item[(iii)] $\|u'\|_{L^\infty(\delta,\infty;L^2(\Omega;\R^N))}\le \|A^0(u(\delta))\|_{L^2(\Omega)}$ for all $\delta>0,$ where,  for each $w\in D(\mathcal{F})$,  $A^0(w)\in \mathcal{F}(w)$ is  the unique minimizer of $\|z\|_{L^2(\Omega)}$ over the closed and convex subset $\mathcal{F}(w)$ of $L^2(\Omega;\R^N)$.
\item[(iv)] $-u'(t)=A^0(u(t))$ for a.e. $t>0.$
\item[(v)] If $u_0\in D(\F),$ then $u'\in L^2(0,T;L^2(\Omega;\R^N))$ and $\F(u)\in W^{1,1}(0,T)$ for all $T>0.$
\item[(vi)] If $u_0\in D(\mathcal{F}),$ then $u'\in L^\infty(0,\infty;L^2(\Omega;\R^N)).$
\end{itemize}
  \end{thm}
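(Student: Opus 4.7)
The plan is to realize the $L^2$-gradient flow $u'\in -\mathcal{F}(u)$ as a Cauchy problem for the subdifferential of a proper, convex, lower semicontinuous functional on the Hilbert space $H=L^2(\Omega;\R^N)$, and then to invoke the classical Brezis--Komura theorem on semigroups generated by subdifferentials (see, e.g., \cite{Br}). Concretely, I would define $\varphi\colon H\to\bar\R$ by $\varphi(u)=\F(u)$ for $u\in D(\F)$ and $\varphi(u)=+\infty$ otherwise (in particular whenever $u\notin W_0^{1,p}(\Omega;\R^N)$). The hypothesis $\F(0)<\infty$ makes $\varphi$ proper, and the assumed convexity of $\F$ on $L^2\cap W_0^{1,p}$, together with the convention $\varphi\equiv+\infty$ off this set, makes $\varphi$ convex on $H$.

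The first substantive step is to verify that $\varphi$ is lower semicontinuous on $H$. Given $u_n\to u$ in $L^2$ with $\liminf \varphi(u_n)=\ell<\infty$, I would extract a subsequence realizing the liminf; coercivity of $\F$ on $W_0^{1,p}$ then yields a uniform $W^{1,p}$-bound, so along a further subsequence $u_n\wcon u$ in $W_0^{1,p}$ (the $L^2$ limit identifies the weak $W^{1,p}$ limit by uniqueness of limits). Since $\F$ is convex and strongly lower semicontinuous on $W_0^{1,p}$, a Mazur-type argument upgrades it to sequential weak lower semicontinuity there, and hence $\varphi(u)=\F(u)\le \ell$. I expect this to be the main obstacle, since it is the one place where the coercivity hypothesis is genuinely used---to lift the $W_0^{1,p}$-level assumptions into lower semicontinuity on the larger Hilbert space $L^2$. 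A routine check then identifies $\mathcal{F}(u)$ with the Hilbert subdifferential $\partial\varphi(u)$: for $w\notin D(\varphi)$ the inequality $\varphi(w)\ge \varphi(u)+(v,w-u)_{L^2}$ is automatic, so restricting test vectors to $D(\F)$ loses no information. Consequently $D(\mathcal{F})=D(\partial\varphi)$ and (\ref{CP-0}) is precisely the evolution equation $u'(t)\in -\partial\varphi(u(t))$ in $H$.

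At this point the Brezis--Komura theorem applies directly: for every $u_0\in\overline{D(\partial\varphi)}$ it produces a unique strong solution $u\in C^0([0,\infty);H)$ with $u(t)\in D(\partial\varphi)$ for all $t>0$, the pointwise identification $-u'(t)=A^0(u(t))$ for a.e.\ $t>0$, and the sharp bound $\|u'\|_{L^\infty(\delta,\infty;H)}\le \|A^0(u(\delta))\|_{L^2}$; moreover, $\varphi\circ u$ is nonincreasing and locally integrable on $(0,\infty)$, one has $u'\in L^2_{\mathrm{loc}}$ with $\varphi\circ u\in W^{1,1}_{\mathrm{loc}}$ whenever $u_0\in D(\varphi)$, and $u'\in L^\infty(0,\infty;H)$ whenever $u_0\in D(\partial\varphi)$. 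Since $\overline{D(\partial\varphi)}=\overline{D(\varphi)}=\overline{D(\F)}$ in $H$, the stated initial-data range matches, and translating these conclusions back via $\varphi=\F$ yields exactly the existence and uniqueness assertions along with all of the properties (i)--(vi). In short, after Step 1 the theorem reduces to a direct citation of the classical subdifferential semigroup theory.
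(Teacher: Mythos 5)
Your proposal matches the paper's argument essentially step for step: you define the same extended functional $\varphi$ (the paper's $I$ in \eqref{fun-I}), prove its lower semicontinuity on $L^2$ by the same mechanism (coercivity gives a $W^{1,p}$-bound, extract a weak $W_0^{1,p}$-limit and identify it with the $L^2$-limit, then Mazur plus convexity upgrades the assumed strong lower semicontinuity of $\F$ on $W_0^{1,p}$ to what is needed), identify $\partial\varphi=\mathcal{F}$ and $D(\partial\varphi)=D(\mathcal{F})$ as in \eqref{fun-I-1}, and then invoke the Brezis--Komura subdifferential semigroup theory for all of (i)--(vi). The only difference is presentational: the paper's Section~\ref{s-2} is a self-contained redevelopment of exactly the semigroup results (Theorems~\ref{G-flow} and~\ref{main-semi-g}) that you cite from the literature.
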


We denote  the unique  function $u$ in the theorem by $u(t)=S(t)u_0.$ It is shown  that $\{S(t)\}_{t\ge 0}$ is in fact a $C^0$-semigroup of contractions  on $\overline{D(\F)}.$  Theorem \ref{main-1} and further properties of the semigroup solution $u(t)=S(t)u_0$ will be established   through the general nonlinear semigroup theory of which  a complete and self-contained review  is given  in Section \ref{s-2} for the convenience of the reader.

 From Theorem \ref{main-1} and the properties of the semigroup $\{S(t)\}_{t\ge 0}$, we establish the following main existence and asymptotic convergence results.

\begin{thm}\label{main-2}  Let $1<p<\infty.$  Let $\F$ satisfy   (\ref{strong-coer}) and  be  lower semicontinuous on $W_0^{1,p}(\Omega;\R^N)$ and convex on $L^2(\Omega;\R^N)\cap W_0^{1,p}(\Omega;\R^N).$ Let $u_0\in \overline{D(\F)}.$
Then the function $u(t)=S(t)u_0$ is a unique variational solution  of the gradient flow associated to  the functional $\F$ with initial datum $u_0$ in the sense of Definition \ref{v-sol} for all $T>0.$
Furthermore,  the following statements are true:
\begin{itemize}
\item[(i)] $u\in L^\infty(\delta,\infty; W^{1,p}_0(\Omega;\R^N))$ for all $\delta>0.$

\item[(ii)]   If  $u_0\in D(\F),$ then $u\in L^\infty(0,\infty; W^{1,p}_0(\Omega;\R^N))$.

\item[(iii)]   If $\F$  satisfies  (\ref{ass-0}), then  $\overline{D(\F)}=L^2(\Omega;\R^N)$.

\item[(iv)]  If $p\ge \frac{2n}{n+2},$ then     $u(t) \rightharpoonup u^*$ in $L^2(\Omega;\R^N)$ as $t\to\infty$
for some minimizer $u^*$ of $\F$ on $W_0^{1,p}(\Omega;\R^N).$ Moreover, if  $p> \frac{2n}{n+2}$, then  $
u(t)\to  u^*$   in $L^2(\Omega;\R^N)$ and $\F(u(t)) \to  \F(u^*)$ as $t\to\infty.$
\end{itemize}
\end{thm}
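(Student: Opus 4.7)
The strategy is to take the semigroup solution $u(t)=S(t)u_0$ produced by Theorem \ref{main-1} as the candidate variational solution and then use the stronger coercivity bound (\ref{strong-coer}) to extract the additional properties (i)--(iv). By Theorem \ref{main-1}(iv), for a.e.\ $t>0$ one has $-u'(t)\in\mathcal{F}(u(t))$, whose defining property reads
\[
\F(w)\ge \F(u(t))-(u'(t),w-u(t))_{L^2}\qquad\forall\,w\in D(\F).
\]
To verify (\ref{var-sol}) I would apply this pointwise with $w=v(\cdot,t)$ for an admissible test function $v$ as in Definition \ref{v-sol} (the inequality being automatic on the set where $\F(v(\cdot,t))=+\infty$), integrate over $[0,\tau]$, and rewrite
\[
(u'(t),v(t)-u(t))_{L^2}=(\partial_t v(t),v(t)-u(t))_{L^2}-\frac{1}{2}\frac{d}{dt}\|v(t)-u(t)\|_{L^2}^2.
\]
Integrating the total derivative in time produces exactly the boundary terms $\frac{1}{2}\|v(\cdot,0)-u_0\|_{L^2}^2$ and $-\frac{1}{2}\|v(\cdot,\tau)-u(\cdot,\tau)\|_{L^2}^2$ of (\ref{var-sol}).

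Uniqueness is the main technical obstacle here, since hypothesis (\ref{ass-0}) is not assumed and \cite[Theorem~7.4]{BDDMS20} cannot be invoked directly. Given two variational solutions $u_1,u_2$ sharing the initial datum $u_0$, my plan is to regularize each one in time by Steklov averages $u_1^\epsilon,u_2^\epsilon$ (for which $\partial_t u_i^\epsilon\in L^2(\Omega_T;\R^N)$), substitute $v=u_2^\epsilon$ in the variational inequality for $u_1$ and $v=u_1^\epsilon$ in that for $u_2$, add the two inequalities, use convexity of $\F$ so that the $\F$-terms combine favorably, and pass to the limit $\epsilon\to 0$ to obtain $\|u_1(\cdot,\tau)-u_2(\cdot,\tau)\|_{L^2}^2\le 0$, hence $u_1\equiv u_2$.

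Properties (i) and (ii) follow from Theorem \ref{main-1}. For (i), since $u(\delta)\in D(\mathcal{F})\subset D(\F)$, the monotonicity in Theorem \ref{main-1}(ii) gives $\F(u(t))\le \F(u(\delta))<\infty$ for all $t\ge\delta$, and then (\ref{strong-coer}) bounds $\|Du(t)\|_{L^p}$ uniformly on $[\delta,\infty)$. For (ii), when $u_0\in D(\F)$ the chain-rule identity $\frac{d}{dt}\F(u(t))=-\|u'(t)\|_{L^2}^2$ (available by Theorem \ref{main-1}(v)) yields $\F(u(t))\le\F(u_0)$ for all $t\ge 0$, so the same $L^p$-bound on $Du(t)$ extends to all of $[0,\infty)$. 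Property (iii) is immediate: (\ref{ass-0}) gives $C_0^\infty(\Omega;\R^N)\subset D(\F)$, which is dense in $L^2(\Omega;\R^N)$.

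For (iv), coercivity (\ref{strong-coer}) and lower semicontinuity produce, by the direct method, a minimizer $u^*$ of $\F$ on $W_0^{1,p}(\Omega;\R^N)$, so $\mathcal{F}^{-1}(0)\ne\emptyset$. When $p\ge\frac{2n}{n+2}$, the continuous Sobolev embedding $W_0^{1,p}(\Omega;\R^N)\hookrightarrow L^2(\Omega;\R^N)$ combined with (i) keeps $\{u(t)\}_{t\ge 1}$ bounded in $L^2$, and the Baillon--Brezis weak-convergence theorem for gradient flows of proper convex lower semicontinuous functionals on Hilbert spaces yields $u(t)\rightharpoonup u^*$ in $L^2(\Omega;\R^N)$ for some minimizer $u^*$. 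When $p>\frac{2n}{n+2}$, the Rellich--Kondrachov embedding is compact, so this weak $L^2$-limit is in fact a strong $L^2$-limit; the monotonicity of $\F(u(t))$ combined with lower semicontinuity of $\F$ at $u^*$ then forces $\F(u(t))\to\F(u^*)$.
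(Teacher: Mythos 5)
Your existence argument and parts (i)--(iii) follow the paper's route essentially verbatim: testing the subdifferential inequality $\F(w)\ge\F(u(t))-(u'(t),w-u(t))_{L^2}$ with $w=v(\cdot,t)$, integrating, and using the coercivity (\ref{strong-coer}) together with the monotonicity of $t\mapsto\F(u(t))$ from Theorem~\ref{main-1}(ii) and (v) is exactly what Lemma~\ref{lem-11} and the proof of Proposition~\ref{pf-main-2} do. For uniqueness you take a genuinely different path. The paper compares an arbitrary variational solution $u$ against the approximate semigroup solutions $u_k(t)=S_0(t)u_k^0$ with $u_k^0\in D(\mathcal F)$, $u_k^0\to u_0$: since each $u_k$ is Lipschitz in $t$ with values in $L^2$ and bounded in $W_0^{1,p}$, it is an admissible competitor in (\ref{var-sol}), the $\F$-terms cancel via $-u_k'\in\mathcal F(u_k)$, and one reads off $\|u_k(\tau)-u(\tau)\|\le\|u_k^0-u_0\|\to0$. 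Your Steklov-average symmetric-testing scheme (essentially the strategy of \cite[Theorem~7.4]{BDDMS20}) can also work, and has the advantage of being intrinsic to the variational formulation, but it is technically heavier than you let on: you first need $\F(u_i)\in L^1(0,T)$ (from (\ref{strong-coer}) and $v=0$) before Jensen can control $\int\F(u_i^\epsilon)$, and the two cross terms $\iint\partial_t u_2^\epsilon\cdot(u_2^\epsilon-u_1)$ and $\iint\partial_t u_1^\epsilon\cdot(u_1^\epsilon-u_2)$ must be recombined carefully before the $\epsilon\to0$ limit can be taken. The paper's approach is shorter because it exploits the semigroup regularity already established in Theorem~\ref{main-1}.

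There is, however, a genuine gap in your treatment of the energy convergence in (iv). You claim that ``the monotonicity of $\F(u(t))$ combined with lower semicontinuity of $\F$ at $u^*$ forces $\F(u(t))\to\F(u^*)$.'' Lower semicontinuity gives the inequality in the wrong direction: from $u(t)\to u^*$ in $L^2$ and the (weak) lower semicontinuity of the extended functional $I$ on $L^2$, you get $\F(u^*)\le\liminf\F(u(t))=\ell$, where $\ell=\lim\F(u(t))$ exists by monotonicity. Since trivially $\ell\ge\F(u^*)$ as $u^*$ is a minimizer, this tells you nothing new. What you actually need is the reverse bound $\ell\le\F(u^*)$, and this comes from the subdifferential inequality, not from semicontinuity. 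The paper deduces it from $\F(u^*)\ge\F(u(t))-(u'(t),u^*-u(t))$ and the uniform bound $\|u'\|_{L^\infty(1,\infty;L^2)}\le\|A^0(u(1))\|$, sending $t\to\infty$. (An alternative is to integrate $\tfrac12\frac{d}{dt}\|u(t)-u^*\|^2\le\F(u^*)-\F(u(t))$ to see that $\F(u(t))-\F(u^*)$, being nonnegative, nonincreasing and integrable on $[1,\infty)$, must tend to $0$.) Either way, some use of $-u'\in\mathcal F(u)$ is indispensable; semicontinuity alone cannot close the argument.
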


This theorem will be proved in Section \ref{s-3} (see Proposition \ref{pf-main-2}). Note that our assumption is slightly weaker than that of \cite{BDDMS20} because we  assume that $\F$ is only  lower semicontinuous on $W_0^{1,p}(\Omega;\R^N).$  We also establish  the uniqueness of a variational solution under the assumption of the theorem.  Part (ii) of the theorem shows  that the result of \cite[Theorems 7.1]{BDDMS20} in fact holds  for all $u_0\in D(\F),$ not just for $u_0=u_*=0;$ part (iii)  recovers the result of  \cite[Theorems 7.3]{BDDMS20}; while part (iv)  follows from a  general theorem on the asymptotic convergence for semigroups (see Theorem \ref{asym-thm}).  Moreover, with part (iv) and Theorem \ref{main-1}(ii), we can see that  if $p> \frac{2n}{n+2},$ then  the energy  $\F(u)$ stabilizes to the minimum level $\F(u^*)$ along the solution $u=S(t)u_0$  as $t\to\infty;$ in fact, the energy function  $\F(u(t))$ is also convex (thus continuous) in $t\in (0,\infty)$ (see \cite[Theorem 3.2]{Br}).

We remark that the semigroup method adopted in this paper avoids  the {\em elliptic regularization} used in the proof of  \cite[Theorem 7.1]{BDDMS20} and the {\em time-discretization implicit Euler  scheme} used in that of  \cite[Theorem 7.3]{BDDMS20}.
However, in both the semigroup method of this paper  and the variational method of \cite{BDDMS20},  the   convexity of  the functional $\F$ on $L^2(\Omega;\R^N)\cap W_0^{1,p}(\Omega;\R^N)$ plays a crucial role.

Regarding  functionals  $\F$ of the variational integral form (\ref{fun-1}), the convexity of $\F$ has been discussed at length in \cite[Sections 4 and 5]{BDDMS20} under certain specific assumptions on  $f(x, z,\xi).$  To emphasize such convexity, as in the case of Morrey's {\em quasiconvexity} with respect to the weak lower semicontinuity  of integral functionals on Sobolev spaces (see, e.g.,  \cite{AF,Ba, D,  Mo}),  we  make the following definition.

 \begin{defn} Let $1\le p\le \infty.$
We say that  a Carath\'eodory function $f\colon \Omega\times \R^N\times \R^{N\times n}\to  \R$  is  {\bf $W^{1,p}_0$-integral convex} if the integral  functional $\F$ defined by (\ref{fun-1})  is convex on $W^{1,p}_0(\Omega;\R^N).$ When $p=\infty$, we simply say that  $f$ is {\bf integral convex.}
 \end{defn}

Observe that if $f(x,z,\xi)$ is $C^1$ in $(z,\xi)$ and satisfies certain growth conditions,  then $f$ is $W_0^{1,p}$-integral convex if and only if the function $h(t)=\frac{d}{dt} \F(u+t\phi)$ is nondecreasing   in $t\in\R$ for all $u,\phi\in W^{1,p}_0(\Omega;\R^N);$ this gives rise to  a  variational inequality ({\em monotonicity}) condition on the gradient map  $(D_z f, D_\xi f).$

For example, for  functions  $f=f(x,\xi)$, where $f$ is $C^1$ in $\xi$,  we have the following condition pertaining to the gradient map  $D_\xi f(x,\xi)\colon \Omega\times  \R^{N\times n}\to  \R^{N\times n}.$

\begin{defn} We say that a Carath\'eodory  map $A(x,\xi) \colon \Omega\times  \R^{N\times n}\to  \R^{N\times n}$ is  {\bf $W^{1,p}_0$-integral monotone} if the  inequality
  \begin{equation}\label{int-mono}
\int_\Omega [ A (x,D\psi+ D\phi)-A(x,D\psi)] :D\phi \,dx \ge 0
\end{equation}
holds for all $\psi,\, \phi\in W^{1,p}_0(\Omega;\R^N).$ When $p=\infty$, we simply say that  $A$ is {\bf integral monotone.}
 \end{defn}

We shall study  $W^{1,p}_0$-integral monotonicity further in Section \ref{s-4}, but here we discuss some consequences of this condition  for the special maps   $A=A(\xi)\colon  \R^{N\times n}\to  \R^{N\times n}.$  For example, in (\ref{int-mono}), let  $\phi,\psi \in C_0^\infty(\Omega;\R^N)$  with $D\psi=\xi$ on the support of $\phi,$ and we  easily see that the $W^{1,p}_0$-integral monotonicity of $A(\xi)$   implies the usual {\em quasimonotonicity} of $A(\xi)$:
\begin{equation}\label{q-mono-0}
\int_\Omega  A (\xi+ D\phi(x)) :D\phi(x) \,dx  \ge 0 \quad \forall\,  \xi\in \R^{N\times n},\; \phi\in C_0^\infty(\Omega;\R^N).
\end{equation}
 In the special case of $A(\xi)=Df(\xi),$ where $f$ is $C^1$, quasimonotonicity condition (\ref{q-mono-0}) implies that the function $h(t)= \int_\Omega f(\xi+tD\phi(x))dx $ satisfies that $h'(t)\ge 0$ for all $t\ge 0$; hence $h(1)\ge h(0)$, which gives the well-known  Morrey's  quasiconvexity for function $f$; that is,
\begin{equation}\label{qx}
\int_\Omega f(\xi+ D\phi(x))\,dx\ge f(\xi)|\Omega|\quad \forall\,  \xi\in \R^{N\times n},\; \phi \in C_0^\infty(\Omega;\R^N)
\end{equation}

We remark that quasimonotonicity  and quasiconvexity play an  important role in the calculus of variations and partial differential  equations; see,  e.g,  \cite{AF,Ba, BDM13, CZ92,  D, DMS11, Ev, Fu87, Ha95,   La96, Mo,  Zh86}.
Note that condition (\ref{qx}) (or even the stronger \emph{polyconvexity} of $f$)  does not imply (\ref{q-mono-0}) for $A=Df$ and thus does not imply the integral convexity of $f$; this can be seen by the polyconvex function $f(\xi)=(\det \xi)^2$ ($N=n\ge 2$) that fails to satisfy  (\ref{q-mono-0}) for $A=Df$  (see, e.g., \cite[Theorem 3.7]{Ya2}).
It is also remarked  that the polyconvex functions on $\R^{2\times 2}$ implicitly constructed in \cite{Ya1} cannot be integral convex either,  because their gradient flows have infinitely many Lipschitz  solutions.

On the other hand, integral convexity is in general not sufficient to guarantee polyconvexity  either. For example, we consider an example by Serre \cite{Se}. Let
\[
\tilde{f}(\xi)=(\xi_{11}-\xi_{32}-\xi_{23})^2+(\xi_{12}-\xi_{31}+\xi_{13})^2 +(\xi_{21}-\xi_{31}-\xi_{13})^2 +\xi_{22}^2 +\xi_{33}^2
\]
for all $\xi=(\xi_{ij})\in\R^{3\times 3}.$ We can choose a number $\epsilon>0$ such that
\[
\tilde{f}(a\otimes b)\ge \epsilon|a\otimes b|^2\quad\forall \,  a, b\in\R^3.
\]
Then it can be shown that the quadratic function $f(\xi)=\tilde{f}(\xi)-\epsilon|\xi|^2$  is rank-one convex (thus also quasiconvex) but not polyconvex (see \cite[Theorem 5.25]{D}). It is easily seen that the quasiconvexity of any  quadratic function implies the integral convexity of the function; thus,  this function $f$ is  integral convex.

Consequently,  polyconvexity is neither necessary nor sufficient for  integral convexity in general.
In this paper, we show that  quasimonotonicity  is not a sufficient condition for integral convexity either.
For this purpose,  we  consider the special  function:
\begin{equation}\label{fun-g}
g(\xi)= |\xi|^4 + k (\det \xi)^2 \quad \forall\, \xi\in \R^{2\times 2},
\end{equation}
where $k\in \R$ is a constant; see, e.g., \cite{CZ92, DDGR, Ha95}.
It has been proved in \cite{CZ92}  that for all $k>4$  the function $g$ is not convex and that for all  $0\le k \le 8$ the gradient  map $A(\xi)=Dg(\xi)$ is  quasimonotone;  see also \cite{Ha95}.

Regarding the special function $g$, we have  the following result, which will be restated and  proved in  more details in Proposition \ref{counter}.

\begin{pro}\label{counter-0}  The function $g$ is  integral convex if and only if $-2\le k\le 4,$ in which case $g$ is in fact convex.
Consequently, for all $4<k\le 8$, $Dg$ is quasimonotone, but  $g$ is not integral convex.
\end{pro}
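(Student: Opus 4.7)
\noindent\emph{Proof proposal.} The plan is to prove that $W^{1,\infty}_0$-integral convexity of $g$ coincides with pointwise convexity, both being equivalent to $-2\le k\le 4$; the asserted consequence for $4<k\le 8$ then follows from the quasimonotonicity of $Dg$ on $[0,8]$ recorded in \cite{CZ92,Ha95}.

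Pointwise convexity is checked by a direct Hessian computation, which yields
\[
D^2g(\xi)(\eta,\eta)=8(\xi:\eta)^2+4|\xi|^2|\eta|^2+2k(\cof\xi:\eta)^2+4k\det\xi\det\eta.
\]
Using the invariance of $g$ under $\xi\mapsto R_1\xi R_2$ with $R_i\in SO(2)$, I may take $\xi=\operatorname{diag}(\sigma_1,\sigma_2)$; the form then decouples into independent pieces acting on the diagonal and off-diagonal entries of $\eta$. The off-diagonal piece is positive semidefinite iff $|k|\le 4$, while the diagonal piece yields a $2\times 2$ matrix whose nonnegative diagonal forces $k\ge -2$ (evaluate at $\sigma_2=0$) and whose nonnegative determinant reduces, with $\alpha=4+2k$, to $\alpha^2-8\alpha-48\le 0$, i.e.\ $k\le 4$. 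Hence $g$ is convex iff $-2\le k\le 4$, and in this range $\F$ is convex on $W^{1,\infty}_0(\Omega;\R^2)$ by Jensen's inequality applied to the integrand.

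For the reverse direction the plan is to exhibit, whenever $k\notin[-2,4]$, a line in $W^{1,\infty}_0(\Omega;\R^2)$ along which $\F$ fails to be convex. The main obstacle is that the natural diagonal-producing separable candidate $u_\lambda(x)=((1+\lambda)\chi(x_1),(\lambda-1)\omega(x_2))$, which for sawtooth $\chi,\omega$ makes $Du_\lambda$ diagonal with $\chi'^2=\omega'^2=1$ a.e.\ and gives the explicit polynomial $\F(u_\lambda)=(4+k)+(8-2k)\lambda^2+(4+k)\lambda^4$ with $\F''(0)=16-4k$, does not lie in $W^{1,\infty}_0((0,1)^2;\R^2)$: for instance $\chi(x_1)$ is nonzero on the top and bottom sides of the square. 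I would repair this via tensor products and high-frequency oscillation. On $\Omega=(0,1)^2$ let $\chi_N\in W^{1,\infty}_0((0,1))$ be a sawtooth of period $1/N$ with $|\chi_N'|=1$ a.e.\ and $\|\chi_N\|_\infty\le 1/(2N)$, and let $\tau_r\in W^{1,\infty}_0((0,1))$ be a plateau equal to $1$ on $[r,1-r]$ and linear at the ends. For $k>4$ take
\[
u_0=\bigl(\chi_N(x_1)\tau_r(x_2),\,-\chi_N(x_2)\tau_r(x_1)\bigr),\quad v=\bigl(\chi_N(x_1)\tau_r(x_2),\,\chi_N(x_2)\tau_r(x_1)\bigr),
\]
both in $W^{1,\infty}_0(\Omega;\R^2)$. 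On the interior square $K=[r,1-r]^2$ the cutoffs equal $1$ with zero derivative, so $Du_\lambda|_K=\operatorname{diag}((1+\lambda)\chi_N'(x_1),(\lambda-1)\chi_N'(x_2))$ and the earlier polynomial reappears, contributing $|K|(16-4k)<0$ to $\F''(0)$. On the boundary strip $T=\Omega\setminus K$ of measure $O(r)$, the potentially dangerous mixed derivatives are bounded by $\|\chi_N\|_\infty\cdot\|\tau_r'\|_\infty\le 1/(2Nr)$, so the choice $N\ge 1/r$ bounds $|Du_\lambda|$ and $|Dv|$ on $T$ by an absolute constant, making the $T$-contribution to $\F''(0)$ at most $O(r)$; taking $r$ small enough depending on $k$ makes the negative interior term dominate, giving $\F''(0)<0$. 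For $k<-2$ the analogous line $u_\lambda=(\lambda\chi_N(x_1)\tau_r(x_2),\chi_N(x_2)\tau_r(x_1))$ produces the interior contribution $|K|\,(1+(2+k)\lambda^2+\lambda^4)$ with $\F''(0)$-dominant term $|K|\cdot 2(2+k)<0$, again completed by the same $O(r)$ boundary estimate.

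In either case $\F$ is not convex along a line inside $W^{1,\infty}_0(\Omega;\R^2)$, so $g$ is not integral convex. Combining both directions closes the equivalence, and the stated consequence for $4<k\le 8$ drops out of the cited quasimonotonicity range for $Dg$.
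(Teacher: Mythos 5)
Your proof is correct, and for the decisive ``not integral convex'' direction it takes a genuinely different route from the paper. The paper identifies the quadratic form $\xi\mapsto D^2g(\xi)(\eta,\eta)$ at matched pairs $(D u, D\hat u)$ (resp.\ $(D\psi,D\phi)$), exhibits four explicit matrices $\xi_1,\dots,\xi_4$ at which the relevant form is negative, checks that $0$ lies in the lamination convex hull of $\{\xi_i\}$, and then invokes the convex integration lemma of M\"uller--\v Sver\'ak \cite{MSv2} to manufacture a piecewise affine $u\in W^{1,\infty}_0(\Omega;\R^2)$ with $Du$ concentrated near the $\xi_i$. You replace this entirely by an explicit tensor-product sawtooth construction: $\chi_N(x_1)\tau_r(x_2)$-type products make $Du_0$ and $Dv$ essentially diagonal on the core square $K=[r,1-r]^2$, where the pointwise formula $g\big(\operatorname{diag}((1+\lambda)\chi_N',(\lambda-1)\chi_N')\big)=(4+k)+(8-2k)\lambda^2+(4+k)\lambda^4$ makes the negative second derivative $16-4k$ manifest, and the cutoff strip is controlled by the $N\gtrsim 1/r$ frequency choice and a measure-$O(r)$ bound. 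This is more elementary and self-contained than the paper's argument (no lamination hulls, no convex integration), at the cost of carrying the cutoff bookkeeping; the paper's route is shorter once that machinery is available and also directly illustrates the role of the rank-one structure. Your convexity characterization also differs in style: the paper writes every $k\in[-2,4]$ as a convex combination of $k=4$ and $k=-2$ and verifies positivity of $D^2g$ at those endpoints via the conformal--anticonformal split $\xi=\xi^++\xi^-$, whereas you diagonalize $\xi$ by $SO(2)\times SO(2)$ and decouple the Hessian into diagonal and off-diagonal $2\times 2$ blocks, obtaining $|k|\le 4$ from the off-diagonal block and $-2\le k\le 4$ from the diagonal block; both give the same range. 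Both halves of your argument are sound.
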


Finally, we remark in passing that, in the class of functions of the simple form $f=f(\xi),$  the quadratic rank-one convex functions and the general convex functions modulo null-Lagrangians, as well as  all the convex combinations of them,  seem to be  the only functions known to be  integral convex;  it would be interesting to see if there are other types of  integral convex functions. We also summarize in Figure \ref{fig1} some relations between various convexities of $f=f(\xi)\in C^1(\R^{N\times n})$ and monotonicities of its gradient map $A=Df$ when $N,n\ge2$ (see \cite{BDDMS20,D}).

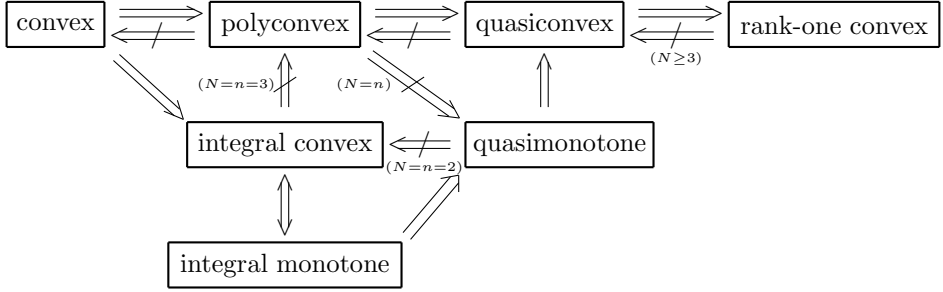
\begin{figure}[ht]
\begin{center}
\begin{tikzpicture}[scale =1]
    \draw[thick] (-0.1,5) -- (1.2,5);
    \draw[thick] (-0.1,4.4) -- (1.2,4.4);
    \draw[thick] (-0.1,4.4) -- (-0.1,5);
    \draw[thick] (1.2,4.4) -- (1.2,5);
    \draw[thick] (0.55,4.7) node[] {convex};
    \draw (1.4,4.9) -- (2.4,4.9);
    \draw (1.4,4.8) -- (2.4,4.8);
    \draw (1.4,4.5) -- (2.4,4.5);
    \draw (1.4,4.6) -- (2.4,4.6);
    \draw (2.4,4.85) node[] {$>$};
    \draw (1.4,4.55) node[] {$<$};
    \draw (1.9,4.55) node[] {$/$};
    \draw (1.4,4.3) -- (2.2,3+63/110);
    \draw (1.32,4.22) -- (2.13,3+133/275);
    \draw (2.17,3+190/275) -- (2.25,3+120/275);
    \draw (1.99,3+150/275) -- (2.25,3+120/275);
    \draw[thick] (2.6,5) -- (4.6,5);
    \draw[thick] (2.6,4.4) -- (4.6,4.4);
    \draw[thick] (2.6,4.4) -- (2.6,5);
    \draw[thick] (4.6,4.4) -- (4.6,5);
    \draw (3.6,4.7) node[] {polyconvex};
    \draw (4.8,4.9) -- (5.8,4.9);
    \draw (4.8,4.8) -- (5.8,4.8);
    \draw (4.8,4.5) -- (5.8,4.5);
    \draw (4.8,4.6) -- (5.8,4.6);
    \draw (5.8,4.85) node[] {$>$};
    \draw (4.8,4.55) node[] {$<$};
    \draw (5.3,4.55) node[] {$/$};
    %
    \draw (4.77,4.35) -- (5.85,3+81/140);
    \draw (4.7,4.25) -- (5.78,3+67/140);
    \draw (5.79,3+190/275) -- (5.92,3+125/275);
    \draw (5.63,3+135/275) -- (5.92,3+125/275);
    \draw (5.15,3.8) -- (5.45,4);
    \draw (4.65,3.9) node[] {$\scriptscriptstyle (N=n)$};
    \draw[thick] (6,5) -- (8.1,5);
    \draw[thick] (6,4.4) -- (8.1,4.4);
    \draw[thick] (6,4.4) -- (6,5);
    \draw[thick] (8.1,4.4) -- (8.1,5);
    \draw (7.05,4.7) node[] {quasiconvex};
    \draw (8.3,4.9) -- (9.3,4.9);
    \draw (8.3,4.8) -- (9.3,4.8);
    \draw (8.3,4.5) -- (9.3,4.5);
    \draw (8.3,4.6) -- (9.3,4.6);
    \draw (9.3,4.85) node[] {$>$};
    \draw (8.3,4.55) node[] {$<$};
    \draw (8.8,4.55) node[] {$/$};
    \draw (8.8,4.25) node[] {$\scriptscriptstyle (N\ge3)$};
    \draw[thick] (9.5,5) -- (12.3,5);
    \draw[thick] (9.5,4.4) -- (12.3,4.4);
    \draw[thick] (9.5,4.4) -- (9.5,5);
    \draw[thick] (12.3,4.4) -- (12.3,5);
    \draw (10.9,4.7) node[] {rank-one convex};
    \draw[thick] (2.3,3.4) -- (4.9,3.4);
    \draw[thick] (2.3,2.8) -- (4.9,2.8);
    \draw[thick] (2.3,2.8) -- (2.3,3.4);
    \draw[thick] (4.9,2.8) -- (4.9,3.4);
    \draw (3.6,3.1) node[] {integral convex};
    \draw (3.65,3.6) -- (3.65,4.2);
    \draw (3.55,3.6) -- (3.55,4.2);
    \draw (3.6,4.2) node[] {$\wedge$};
    \draw (3.45,3.8) -- (3.75,4);
    \draw (2.95,3.9) node[] {$\scriptscriptstyle (N=n=3)$};
    \draw[thick] (2.05,1.8) -- (5.15,1.8);
    \draw[thick] (2.05,1.2) -- (5.15,1.2);
    \draw[thick] (2.05,1.2) -- (2.05,1.8);
    \draw[thick] (5.15,1.2) -- (5.15,1.8);
    \draw (3.6,1.5) node[] {integral monotone};
    \draw (3.65,2) -- (3.65,2.6);
    \draw (3.55,2) -- (3.55,2.6);
    \draw (3.6,2.6) node[] {$\wedge$};
    \draw (3.6,1.99) node[] {$\vee$};
    %
    \draw (5.17,1.93) -- (5.8,2+1141/1700);
    \draw (5.27,1.84) -- (5.91,2+252/425);
    \draw (5.67,2.65) -- (5.91,2+59/85);
    \draw (5.91,2.47) -- (5.91,2+59/85);
    \draw[thick] (6,3.4) -- (8.5,3.4);
    \draw[thick] (6,2.8) -- (8.5,2.8);
    \draw[thick] (6,2.8) -- (6,3.4);
    \draw[thick] (8.5,2.8) -- (8.5,3.4);
    \draw (7.25,3.1) node[] {quasimonotone};
    \draw (7.1,3.6) -- (7.1,4.2);
    \draw (7,3.6) -- (7,4.2);
    \draw (7.05,4.2) node[] {$\wedge$};
    \draw (5.1,3.15) -- (5.8,3.15);
    \draw (5.1,3.05) -- (5.8,3.05);
    \draw (5.1,3.1) node[] {$<$};
    \draw (5.45,3.1) node[] {$/$};
    \draw (5.45,2.80) node[] {$\scriptscriptstyle (N=n=2)$};
    \end{tikzpicture}
\end{center}
\caption{Generalized convexities of $f=f(\xi)\in C^1(\R^{N\times n})$ and monotonicities of $A=Df$ for $N,n\ge2$}
\label{fig1}
\end{figure}

\section{A  Primer on Nonlinear Semigroup Theory}   \label{s-2}

In this section, we provide a self-contained review on the theory of  nonlinear semigroups generated by the subdifferential of  a convex and  lower semicontinuous functional on  Hilbert space.

All the results here can be found in the books \cite{Bar,  Br,  Ev-b}; see also \cite[Chapter 9]{AMRT}.  However,  we include the detailed  proofs for convenience of the reader.

Hereafter, $H$ will denote a real Hilbert space with inner product $(\cdot,\cdot)$ and norm $\|\cdot\|.$  We first recall some basic definitions and properties.

\begin{defn} Let  $I\colon H\to \bar \R =(-\infty,\infty].$
 \begin{itemize}
\item[(i)]   $I$ is called {\bf proper} if  $D(I)=\{u\in H:  I(u)<\infty\}\ne \emptyset.$
\item[(ii)]  $I$ is  called {\bf convex} if $I(\l u +(1-\l)v)\le \l I(u) +(1-\l)I(v)$ for all $u,v\in H$ and $\l\in [0,1].$
\item[(iii)]   $I$ is called  {\bf lower semicontinuous} if
\[
I(u)\le \liminf_{k\to\infty} I(u_k) \quad \mbox{whenever $u_k\to u$ in $H.$}
\]
\item[(iv)] For each $u\in H$,  define the  {\bf subdifferential set} of $I$ at $u$ by
 \[
\partial I(u):=\{v\in H : I(w)\ge I(u)+(v,w-u) \;\;\forall\, w\in H\},
\]
 and define
\[
D(\partial I)=\{u\in H: \partial I(u)\ne \emptyset\}.
\]
\end{itemize}
\end{defn}

The following properties are easily proved  from the definitions; we omit the proof.

\begin{lem}\label{0-lem} Let $I\colon H\to \bar \R $ be  proper and $u,\tilde{u}\in H$.  Then
\begin{enumerate}
\item[(i)] $D(\partial I)\subseteq D(I);$
\item[(ii)] $\partial I(u)$ is  convex and sequentially weakly closed in $H;$

\item[(iii)] \emph{(\mbox{{\bf Monotonicity}})} if $v\in \partial I(u)$ and $\tilde v\in \partial I(\tilde u),$ then
\[
(v-\tilde v, u-\tilde u) \ge 0;
\]
\item[(iv)] $I(u)=\min_H I$ if and only if $0\in \partial I(u).$
\end{enumerate}
\end{lem}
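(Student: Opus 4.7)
The plan is to verify each item directly from the definition of the subdifferential: every assertion amounts to rewriting or combining the single defining inequality
\[
I(w)\ge I(u)+(v,w-u)\quad\forall\, w\in H
\]
that characterises $v\in\partial I(u)$, so no machinery beyond basic Hilbert-space linearity will be needed.

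For (i), I would start with $v\in\partial I(u)$ and use properness of $I$ to pick some $w_0\in D(I)$. Evaluating the subgradient inequality at $w=w_0$ bounds $I(u)$ from above by the finite quantity $I(w_0)-(v,w_0-u)$, which gives $u\in D(I)$. (Equivalently, one may observe that if $I(u)=+\infty$, the inequality would force $I\equiv+\infty$, contradicting properness.) For (ii), convexity of $\partial I(u)$ is a one-line check: a convex combination of two subgradient inequalities is again a subgradient inequality, because $v\mapsto I(u)+(v,w-u)$ is affine in $v$. Sequential weak closedness is equally direct: if $v_k\in\partial I(u)$ and $v_k\rightharpoonup v$, then for each fixed $w\in H$ one has $(v_k,w-u)\to(v,w-u)$, and passing to the limit in $I(w)\ge I(u)+(v_k,w-u)$ yields $v\in\partial I(u)$.

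For (iii), I would write the defining inequality twice --- once with base point $u$ and test point $\tilde u$, and once with base point $\tilde u$ and test point $u$ --- namely
\[
I(\tilde u)\ge I(u)+(v,\tilde u-u),\qquad I(u)\ge I(\tilde u)+(\tilde v,u-\tilde u),
\]
and add them. By (i), both $I(u)$ and $I(\tilde u)$ are finite, so the $I$-terms cancel and one is left with $0\ge (\tilde v-v,u-\tilde u)$, which is exactly $(v-\tilde v,u-\tilde u)\ge 0$. For (iv), the equivalence is essentially a tautology: $0\in\partial I(u)$ says precisely that $I(w)\ge I(u)+(0,w-u)=I(u)$ for every $w\in H$, which is the statement that $u$ minimises $I$ over $H$.

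The only mildly subtle point anywhere is the bookkeeping of $\bar\R$-valued quantities in (i) and (iii), where one must be sure never to write $\infty-\infty$ when cancelling the $I$-terms; once (i) is in hand, this is automatic. Beyond that, the entire lemma is a sequence of one-line verifications, so I do not anticipate any genuine obstacle --- the whole proof should comfortably fit on half a page.
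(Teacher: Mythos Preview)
Your proposal is correct and proceeds exactly as the paper indicates: the paper itself omits the proof, noting only that ``the following properties are easily proved from the definitions,'' and your direct verification of each item from the subgradient inequality is precisely that. The one care point you flag about avoiding $\infty-\infty$ in (iii) via (i) is the only thing worth mentioning, and you handle it correctly.
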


\subsection{Proper, Convex and Lower Semicontinuous  Functionals}
In this subsection, we  assume that  $I\colon H\to \bar \R$ is  a proper, convex and lower semicontinuous functional.

\begin{lem}\label{pro-dense} $\overline{D(\partial I)} =\overline{D(I)}\ne\emptyset.$  Moreover, there exists  $C>0$ such that
\begin{equation}
I(u)\ge  -C -C \|u\| \quad \forall\, u\in H.
\label{coer-I}
\end{equation}
\end{lem}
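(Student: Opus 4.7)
The plan is to first establish the affine lower bound (\ref{coer-I}), since the density statement in part one will need it, and then to prove the density $\overline{D(\partial I)}=\overline{D(I)}$ by Moreau--Yosida regularization. Non-emptiness of $\overline{D(I)}$ is immediate from properness.

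For (\ref{coer-I}), I would apply the Hahn--Banach separation theorem in $H\times\R$ to the epigraph $E=\{(v,r)\in H\times\R:I(v)\le r\}$, which is nonempty, convex, and (by lower semicontinuity) closed. Fix any $u_0\in D(I)$; the point $(u_0,I(u_0)-1)$ lies outside $E$, so Hahn--Banach yields $(a,b)\in H\times\R$ and $\alpha\in\R$ with $(a,v)+br\ge\alpha>(a,u_0)+b(I(u_0)-1)$ for every $(v,r)\in E$. Letting $r\to\infty$ for a fixed $v\in D(I)$ forces $b\ge 0$; the strict inequality at $(u_0,I(u_0))$ then rules out $b=0$, so $b>0$. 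Taking $r=I(v)$ and dividing by $b$ yields $I(v)\ge -\frac{1}{b}(a,v)+\frac{\alpha}{b}$, and the Cauchy--Schwarz inequality produces constants giving the required bound $I(v)\ge -C-C\|v\|$ (the bound is automatic on $H\setminus D(I)$ since $I=+\infty$ there).

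For the density, $\overline{D(\partial I)}\subseteq\overline{D(I)}$ is immediate from Lemma \ref{0-lem}(i). For the reverse inclusion, fix $u\in D(I)$ and, for $\l>0$, consider the Moreau--Yosida problem of minimizing $\Phi_\l(v):=I(v)+\frac{1}{2\l}\|u-v\|^2$ over $v\in H$. The bound (\ref{coer-I}) shows that $\Phi_\l$ is bounded below and coercive, so a minimizing sequence is norm-bounded; extracting a weakly convergent subsequence and using that a convex lower semicontinuous functional is weakly lower semicontinuous (its epigraph is closed and convex, hence weakly closed), together with the strict convexity of the penalty, produces a unique minimizer $J_\l u\in D(I)$. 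A standard perturbation argument testing the minimality against $J_\l u+t(w-J_\l u)$ for $t\in(0,1]$, $w\in H$, and letting $t\to 0^+$ with the help of convexity of $I$ shows that $\frac{u-J_\l u}{\l}\in\partial I(J_\l u)$, so $J_\l u\in D(\partial I)$. Finally, the minimization inequality $I(J_\l u)+\frac{1}{2\l}\|u-J_\l u\|^2\le I(u)$ combined with (\ref{coer-I}) implies $\frac{1}{2\l}\|u-J_\l u\|^2\le I(u)+C+C\|J_\l u\|$; since $\|J_\l u\|\le\|u\|+\|u-J_\l u\|$, an elementary absorption argument then forces $\|u-J_\l u\|\to 0$ as $\l\to 0^+$, placing $u\in\overline{D(\partial I)}$.

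The main technical hurdle is the second step: showing that the resolvent $J_\l u$ exists, lies in $D(\partial I)$, and converges back to $u$. The existence and characterization require a careful interplay of the affine minorant (for coerciveness of $\Phi_\l$), the convex-analytic upgrade from norm to weak lower semicontinuity (for compactness of minimizing sequences), and the first-order optimality argument to recover the subgradient; the convergence $J_\l u\to u$ then follows from the affine minorant once more. Once these two steps are in place, the lemma follows directly.
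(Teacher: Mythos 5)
Your proof is correct, but it takes a genuinely different route from the paper's. You prove the affine minorant \eqref{coer-I} first, via Hahn--Banach separation of the closed convex epigraph of $I$ in $H\times\R$ from a point below it, and then use \eqref{coer-I} as the coercivity input for a standard Moreau--Yosida regularization argument: existence of the resolvent $J_\l u$ by the direct method, the first-order optimality condition to identify $\frac{u-J_\l u}{\l}\in\partial I(J_\l u)$, and an absorption estimate to show $J_\l u\to u$ as $\l\to 0^+$. The paper reverses the logical order. It first proves the density statement directly by projecting the point $[u, I(u)-\epsilon]$ onto the epigraph $K\subset H\times\R$ and extracting a subgradient from the projection's variational inequality; since this projection argument does not need the affine minorant, the paper then obtains \eqref{coer-I} essentially for free from a single subgradient inequality once $D(\partial I)\ne\emptyset$ is known. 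Both approaches are sound. Yours is the more standard textbook argument and is perhaps easier to motivate; the paper's epigraph-projection argument is slicker in that it avoids invoking Hahn--Banach and yields the minorant as a one-line corollary of density. It is worth noting that the Moreau--Yosida minimization you use here is precisely what the paper carries out one lemma later (Lemma~\ref{pro-onto}), where coercivity \eqref{coer-I} is already available; so your proposal effectively front-loads that technique.
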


\begin{proof}
Let  $u\in D(I)$ and $\epsilon >0.$  Let  $h=I(u)-\epsilon.$ Note that  $K=\{[v,s]\in H\times \R : I(v)\le s\}$  is a nonempty, closed and convex set in the Hilbert space $H\times \R.$
Let
$
 [q,k]=\mathbb P_K([u,h])\in K
$
be the projection of $[u,h]\in (H\times\R)\setminus K$ onto the set $K$;
this projection is characterized by $I(q)\le k$ and the variational inequality:
 \begin{equation}\label{C-1}
(p-q,u-q)+(I(p)+\lambda-k)(h-k) \le 0\quad  \forall\, p\in D(I),\;\forall\, \lambda\ge 0.
\end{equation}
In this inequality, letting $\lambda\to \infty$ with $p$ fixed yields  $k\ge h$, and  letting $p=q$ and $ \lambda=0$ gives
$
 (I(q)-k)(h-k)\le 0;
$
thus,
\begin{equation}\label{C-2}
 (I(q)-k)(h-k)= 0.
\end{equation}

We claim that $I(q)>h.$ Otherwise, $I(q)\le h$ would imply $ (I(q)-k)(h-k)\ge (h-k)^2$, and hence, by (\ref{C-2}), we would have  $h=k;$ this in turn by (\ref{C-1}) with $p=u$ would imply $u=q,$   yielding  a contradiction: $I(u)=I(q)\le k=h=I(u)-\epsilon.$

We now have
\[
k\ge I(q)>h=I(u)-\epsilon,
\]
which, by (\ref{C-2}), proves $I(q)=k.$ We thus rewrite   (\ref{C-1})  with $\lambda=0$ as
\[
I(p)\ge I(q) +  \Big(\frac{u-q}{k-h}, p-q\Big) \quad \forall\, p\in D(I).
\]
This shows that $\frac{u-q}{k-h} \in \partial I(q);$  so  $q\in D(\partial I).$ Furthermore,
taking $p=u$ and $\lambda=0$ in (\ref{C-1}) and rearranging, we have
\[
\|u-q\|^2\le\|u-q\|^2 + (I(u)-I(q))^2 \le \epsilon (I(u)-I(q))<\epsilon^2.
\]
This confirms that $\overline{D(\partial I)} =\overline{D(I)}\ne\emptyset.$

Finally, since $D(\partial I)\ne \emptyset,$ we can choose $q_0\in D(\partial I)$ and $p_0\in \partial I(q_0).$ Then
\[
I(u)\ge I(q_0)+(p_0,u-q_0)\ge -C-C\|u\| \quad \forall\, u\in H,
\]
where $C:=\max\{|I(q_0)|+\|p_0\|\|q_0\|,\|p_0\|\}+1.$
\end{proof}

\begin{lem}\label{phi} Given $w\in H$ and $\beta>0,$ let
$
\Phi(u)=\frac{\beta}{2} \|u-w\|^2  + I(u)$ for all $u\in H$. Then
\[
\partial \Phi(u)= \beta (u-w) +\partial I(u) \quad \forall\, u\in D(I).
\]
\end{lem}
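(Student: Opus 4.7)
The claim is a set-equality, so my plan is to verify the two inclusions separately. Note first that $D(\Phi)=D(I)$ because the quadratic term $\frac{\beta}{2}\|u-w\|^2$ is finite everywhere, so the subdifferentials are being compared on the common domain $D(I)$.

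The inclusion $\beta(u-w)+\partial I(u)\subseteq \partial \Phi(u)$ is the straightforward one. Given $v_0\in \partial I(u)$, I combine the subdifferential inequality $I(z)\ge I(u)+(v_0,z-u)$ with the elementary identity
\[
\tfrac{\beta}{2}\|z-w\|^2=\tfrac{\beta}{2}\|u-w\|^2+\beta(u-w,z-u)+\tfrac{\beta}{2}\|z-u\|^2,
\]
discarding the nonnegative $\tfrac{\beta}{2}\|z-u\|^2$ term to obtain $\Phi(z)\ge \Phi(u)+(\beta(u-w)+v_0,z-u)$ for every $z\in H$.

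The reverse inclusion $\partial \Phi(u)\subseteq \beta(u-w)+\partial I(u)$ is the part I expect to be the main obstacle, because the naive rearrangement of a $\Phi$-subdifferential inequality leaves behind an unwanted quadratic residual. Concretely, if $v\in \partial \Phi(u)$, writing $v_1:=v-\beta(u-w)$ and using the same identity for $\|z-w\|^2$, I arrive at
\[
I(z)\ge I(u)+(v_1,z-u)-\tfrac{\beta}{2}\|z-u\|^2 \quad \forall\, z\in H.
\]
This is not yet the required subdifferential inequality. The standard trick to absorb the quadratic term is to test against the convex combination $z_t:=u+t(z-u)$ with $t\in (0,1)$. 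When $z\in D(I)$, convexity of $I$ yields $I(z_t)\le (1-t)I(u)+tI(z)<\infty$, and substituting $z_t$ in place of $z$ in the displayed inequality (with $u\in D(I)$ by hypothesis) gives
\[
(1-t)I(u)+tI(z)\ge I(z_t)\ge I(u)+t(v_1,z-u)-\tfrac{\beta}{2}t^2\|z-u\|^2.
\]
Cancelling $I(u)$, dividing by $t>0$, and sending $t\to 0^+$ produces the clean inequality $I(z)\ge I(u)+(v_1,z-u)$. For $z\notin D(I)$ the inequality holds trivially since $I(z)=+\infty$. Hence $v_1\in \partial I(u)$, which is exactly $v\in \beta(u-w)+\partial I(u)$ and completes the proof.
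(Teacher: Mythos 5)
Your proof is correct and takes essentially the same route as the paper: both reduce the nontrivial inclusion $\partial\Phi(u)\subseteq\beta(u-w)+\partial I(u)$ to removing a quadratic remainder, and both remove it by testing along the segment $u+t(z-u)$, using convexity of $I$ to get the linear bound $I(u+t(z-u))-I(u)\le t\bigl(I(z)-I(u)\bigr)$, and letting $t\to 0^+$. The paper simply packages the two inclusions as the equivalence of two pointwise inequalities rather than writing them out separately.
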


\begin{proof}  Let $u\in D(I)$ and $x\in H.$ Set $y=\beta(u-w)+x.$ Then for all $z\in H$,
\[
\Phi(u+z)-\Phi(u)-(y,z)=I(u+z)-I(u)-(x,z)+\frac{\beta}{2}\|z\|^2\quad  \forall\,z\in H.
\]

We claim that the following two statements are equivalent:
\begin{eqnarray}
 I(u+z)-I(u)-(x,z)+\frac{\beta}{2}\|z\|^2\ge 0\quad  \forall\,z\in H. \label{equ-1}
\\
I(u+z)-I(u)-(x,z) \ge 0 \quad \forall\,z\in H.\label{equ-2}
  \end{eqnarray}
Clearly, (\ref{equ-2}) implies (\ref{equ-1}).  Next, assume (\ref{equ-1}). To prove (\ref{equ-2}), without loss of generality, let $z\in H$ be such that $u+z\in D(I)$, and define $h(t)=I(u+tz)-I(u)-t(y,z).$ Then $h$ is convex on $[0,1]$, and $h(0)=0.$ So
 \[
  h(s) \le (1-s)h(0)+ sh(1)=sh(1)\quad \forall\, s\in (0,1).
  \]
Since (\ref{equ-1}) implies $h(s)\ge -\frac{\beta}{2}s^2\|z\|^2$, it follows that $h(1)\ge  -\frac{\beta}{2}s\|z\|^2$ for all $s\in (0,1).$ Letting $s\to 0^+$, we have  $h(1)\ge 0,$ which is (\ref{equ-2}).

Finally, the equivalence of (\ref{equ-1}) and (\ref{equ-2}) implies  that $y\in \partial \Phi(u) \iff x\in \partial I(u),$ which in turn proves that $\partial \Phi(u)= \beta (u-w) +\partial I(u).$
\end{proof}

\begin{lem}\label{pro-onto} For each $w\in H$ and $\l>0$, the inclusion
\begin{equation}\label{inclusion-1}
w\in u+\l \partial I(u)
\end{equation}
has a unique solution $u\in D(\partial I).$
 \end{lem}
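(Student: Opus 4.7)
The plan is to recast the inclusion $w \in u + \lambda \partial I(u)$ as a minimization problem and invoke the direct method of the calculus of variations. For this, I would introduce the auxiliary functional
\[
\Phi(u) = \tfrac{1}{2\lambda}\|u - w\|^2 + I(u), \qquad u \in H.
\]
Applying Lemma \ref{phi} with $\beta = 1/\lambda$ gives $\partial \Phi(u) = \lambda^{-1}(u - w) + \partial I(u)$ for every $u \in D(I)$. Consequently, the inclusion $w \in u + \lambda \partial I(u)$ with $u \in D(\partial I)$ is equivalent to $0 \in \partial \Phi(u)$, which by Lemma \ref{0-lem}(iv) is precisely the condition that $u$ minimizes $\Phi$ on $H$.

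Next I would verify that $\Phi$ admits a minimizer. The functional $\Phi$ inherits properness, convexity, and lower semicontinuity from $I$, and the quadratic term makes it strictly convex. For coercivity, I would combine the quadratic with the affine lower bound $I(u) \ge -C - C\|u\|$ coming from Lemma \ref{pro-dense} to obtain
\[
\Phi(u) \ge \tfrac{1}{2\lambda}\|u-w\|^2 - C - C\|u\|,
\]
which tends to $+\infty$ as $\|u\|\to\infty$. Now the direct method applies: take a minimizing sequence $\{u_k\}\subset D(I)$; coercivity forces it to be bounded in $H$, so a subsequence converges weakly to some $u\in H$. Since $\Phi$ is convex and strongly lower semicontinuous, it is weakly lower semicontinuous, and therefore $\Phi(u)\le\liminf_k \Phi(u_k)=\inf_H\Phi$. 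Thus $u$ is a minimizer, automatically in $D(I)$, and hence solves $w\in u + \lambda\partial I(u)$, putting $u\in D(\partial I)$.

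For uniqueness, either strict convexity of $\Phi$ suffices, or I would argue directly from monotonicity: if $u_1,u_2$ both satisfy the inclusion, then $\lambda^{-1}(w-u_i)\in \partial I(u_i)$, and Lemma \ref{0-lem}(iii) yields
\[
0\le \bigl(\lambda^{-1}(w-u_1)-\lambda^{-1}(w-u_2),\, u_1-u_2\bigr)=-\lambda^{-1}\|u_1-u_2\|^2,
\]
forcing $u_1=u_2$.

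The one nontrivial point is the weak lower semicontinuity of $\Phi$, which is really the only place the combined hypothesis \emph{convex and lower semicontinuous} is exploited: it follows from the classical result that a strongly lower semicontinuous convex function on a Hilbert space is also weakly lower semicontinuous (a consequence of Mazur's lemma, or equivalently, of the fact that the sublevel sets are closed and convex, hence weakly closed). Everything else is algebraic bookkeeping via Lemmas \ref{0-lem} and \ref{phi} together with the coercive estimate already recorded in Lemma \ref{pro-dense}.
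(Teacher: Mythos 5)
Your proposal is correct and follows essentially the same route as the paper: recast the inclusion as minimizing $\Phi(u)=\tfrac{1}{2\lambda}\|u-w\|^2+I(u)$, establish weak lower semicontinuity via Mazur's lemma and coercivity via the affine lower bound from Lemma \ref{pro-dense}, apply the direct method, and then translate back through Lemma \ref{phi} and Lemma \ref{0-lem}(iv); uniqueness via monotonicity is exactly the paper's argument (strict convexity of $\Phi$ is an equally valid alternative you note).
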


\begin{proof}  1. We intend to show by the direct method in the calculus of variations that the functional
\[
\Phi(u)=\frac{1}{2\l} \|u-w\|^2 + I(u)
\]
has a minimizer $u$ on $H,$ which is a solution of inclusion (\ref{inclusion-1}).

First, we show that $\Phi$ is (sequentially) weakly lower semicontinuous on $H$. So, assume $u_k\wcon u$ in $H$ and
\[
\liminf_{k\to \infty} \Phi(u_k)=\lim_{j\to\infty}\Phi(u_{k_j})=:l <\infty.
\]
For each $\epsilon>0$, the set $K_\epsilon=\{w\in H : \Phi(w)\le l+\epsilon\}$ is closed and convex, and is thus  weakly closed by  {Mazur's theorem}. Since all but finitely many of $u_{k_j}$ lie in $K_\epsilon$, by the weak closedness, we have  $u\in K_\epsilon;$ thus
$
\Phi(u)\le l+\epsilon$ for all $\epsilon>0$. Consequently, $\Phi(u)\le l,$ which
  proves the weak lower semicontinuity of $\Phi.$

2. By (\ref{coer-I}), there exists a constant $C_1$ such that
 \begin{equation}\label{coer-J}
\Phi(u)\ge -C_1 +\frac{1}{4\l}\|u\|^2 \quad\forall\,u\in H.
\end{equation}
We choose  a minimizing sequence $\{u_k\}_{k=1}^\infty$ in $H$ for $\Phi$ so that
\[
\lim_{k\to\infty} \Phi(u_k) =\inf_{u\in H} \Phi(u)=:m.
\]
Thanks to (\ref{coer-J}) and the fact that $\Phi$ is proper, we see that $m\in\R$ and that the sequence $\{u_k\}$ is bounded in $H$. So we can choose a subsequence $u_{k_j}\wcon u$ for some $u\in H$. Then the weak lower semicontinuity  of $\Phi$ implies
\[
\Phi(u)\le \liminf_{j\to\infty} \Phi(u_{k_j})=m;
\]
hence,  $u$ is a minimizer of $\Phi$ on $H.$ Thus, by  Lemmas \ref{0-lem}(iv) and \ref{phi},
$
0\in  \partial \Phi(u)=\frac{u-w}{\l} + \partial I(u),
$
from which we have $u\in D(\partial I)$ and $w\in u+\l \partial I(u).$

3. Finally, we show  the uniqueness of  solution $u.$ Assume that $\tilde u\in H$ is any  solution of inclusion (\ref{inclusion-1}). Hence $\frac{w-\tilde u}{\l}\in \partial I(\tilde u)$, and so $\tilde u\in D(\partial I).$ By  monotonicity, we have
\[
\Big(\frac{w-u}{\l}-\frac{w-\tilde u}{\l}, u-\tilde u\Big)\ge 0,
\]
which simplifies to $-\frac{1}{\l}\|u-\tilde u\|^2 \ge 0$ and hence $u=\tilde u$ since  $\l >0.$
\end{proof}

\begin{lem}\label{maximal} The map $\partial I\colon H\to 2^H$ is maximal monotone in the sense that for any $p,q\in H,$
\[
(*) \quad \begin{cases} (p-x,q-y)\ge 0\\
\forall\, x\in D(\partial I),\;\;\forall\,y\in\partial I(x)\end{cases}   \imply \quad p\in D(\partial I),\;\;q\in\partial I(p).
\]
\end{lem}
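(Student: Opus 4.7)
The plan is to exploit the surjectivity established in Lemma \ref{pro-onto}. The key idea is that maximality of a monotone operator $\partial I$ follows from the fact that $\mathrm{Id} + \partial I$ is already onto $H$; any ``external'' pair $(p,q)$ that is monotonely related to the graph of $\partial I$ must then already lie on the graph, since otherwise the resolvent would produce a distinct point violating monotonicity.

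Concretely, assume $(p,q)\in H\times H$ satisfies the hypothesis $(*)$. Apply Lemma \ref{pro-onto} with $\lambda=1$ and $w=p+q$: this produces a unique $u\in D(\partial I)$ such that
\[
p+q\in u+\partial I(u),
\]
equivalently $y:=p+q-u\in \partial I(u)$. So the pair $(x,y):=(u,p+q-u)$ lies in the graph of $\partial I$ and is therefore admissible in the hypothesis $(*)$. Substituting,
\[
0\le (p-u,\,q-(p+q-u))=(p-u,\,u-p)=-\|p-u\|^2,
\]
which forces $u=p$. Then $p\in D(\partial I)$ and $q=p+q-u\in\partial I(u)=\partial I(p)$, as required.

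There is essentially no hard step: the entire content of the argument is packaged in Lemma \ref{pro-onto}, which provides the resolvent of $\partial I$. The only thing to be careful about is the algebraic juggling when plugging $(x,y)=(u,p+q-u)$ into $(*)$, so that the cross terms cancel and leave $-\|p-u\|^2$; once that is seen, uniqueness of $u$ and the identification $q\in\partial I(p)$ are immediate.
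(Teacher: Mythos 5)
Your proof is correct and is essentially identical to the paper's: both apply Lemma \ref{pro-onto} with $\lambda=1$ to $w=p+q$, plug the resulting pair $(u,p+q-u)$ into $(*)$, and conclude $\|p-u\|^2\le 0$. The only difference is cosmetic (you name the solution $u$ where the paper reuses $x$).
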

\begin{proof} Let $p$ and $q$ satisfy $(*).$ By Lemma \ref{pro-onto}, there exists a unique  $x\in D(\partial I)$ such that $p+q\in x+\partial I(x).$ Let $y=p+q-x\in \partial I(x)$ and insert  $x,y$ into $(*)$ to obtain
\[
0\le (p-x,q-y)=(p-x,x-p)=-\|p-x\|^2,
\]
which shows that $p=x$ and hence $q=y.$ So $p\in D(\partial I)$ and $q\in\partial I(p).$
\end{proof}

\begin{defn}  Let  $\l >0.$
\begin{itemize}
\item[(i)] The  {\bf nonlinear resolvent} $J_\l \colon H\to D(\partial I)$ is defined by
\[
J_\l(w)=u,
\]
where $u\in D(\partial I)$ is the unique solution of
$
w\in u+\l \partial I(u).
$
\item[(ii)] The  {\bf Yosida approximation} $A_\l \colon H\to H$ is defined by
\[
A_\l(w)=\frac{w-J_\l(w)}{\l}.
\]

\item[(iii)] The {\bf Moreau envelope}   $I_\l\colon H\to  \R $ is defined by
\[
I_\l(w)=\min_{u\in H} \left\{\frac{1}{2\l} \|u-w\|^2 + I(u)\right \} \quad \forall\, w\in H,
\]
which has a unique minimizer $u=J_\l(w);$   thus
\begin{equation}\label{min-0}
I(w)\ge I_\l(w)=\frac{\l}{2} \|A_\l(w)\|^2 + I(J_\l(w)) \ge I(J_\l(w)) \quad \forall \, w\in H.
\end{equation}
\end{itemize}
\end{defn}

\begin{thm}\label{Yosida}  For each $\l >0$ and $w, \tilde w\in H$, the following statements hold:
\begin{enumerate}
\item[(i)] $\|J_\l(w)-J_\l(\tilde w)\|\le \|w-\tilde w\|.$

\item[(ii)] $\|A_\l (w)-A_\l (\tilde w)\|\le \frac{1}{\l} \|w-\tilde w\|.$

\item[(iii)] $(A_\l(w)-A_\l(\tilde w), w-\tilde w)\ge 0.$

\item[(iv)] $A_\l(w)\in \partial I(J_\l(w)).$

\item[(v)] If $w\in D(\partial I)$, then
$\|A_\l(w)\|\le \|z_0\|$ for all $\l>0,$ where
\[
z_0:=A^0(w)\in \partial I(w)
\]
is the unique minimizer of $\|z\|$ over the closed and convex set $\partial I(w).$

\item[(vi)]
\[
\lim_{\l\to 0^+} J_\l(w)=w \quad \forall\,  w\in \overline{D(\partial I)} =\overline{D(I)}.
\]

\item[(vii)] The  function $I_\l\colon H\to \R $ is convex and differentiable, and $
I_\l'(w)=A_\l(w)$ for all $w\in H;$ thus $I_\l\in C^{1,1}(H;\R).$ Moreover,
\[
\lim_{\l\to 0^+} I_\l(w)=I(w) \quad \forall \, w\in \overline{D(\partial I)} =\overline{D(I)}.
\]
\end{enumerate}
\end{thm}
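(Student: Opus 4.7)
The plan is to treat items (i)--(iv) as immediate corollaries of the defining inclusion $w-J_\lambda(w)\in\lambda\,\partial I(J_\lambda(w))$ combined with the monotonicity of $\partial I$ from Lemma \ref{0-lem}(iii). Setting $u=J_\lambda(w)$, $\tilde u=J_\lambda(\tilde w)$ and pairing the subgradient elements $(w-u)/\lambda$ and $(\tilde w-\tilde u)/\lambda$ yields the fundamental inequality
\[
(w-\tilde w,\,u-\tilde u)\ge \|u-\tilde u\|^2,
\]
from which Cauchy--Schwarz gives the nonexpansiveness in (i). For (ii), I would expand $\lambda^2\|A_\lambda(w)-A_\lambda(\tilde w)\|^2 = \|w-\tilde w\|^2 - 2(w-\tilde w,u-\tilde u)+\|u-\tilde u\|^2$ and apply the same inequality to collapse the right side to $\|w-\tilde w\|^2 - \|u-\tilde u\|^2\le \|w-\tilde w\|^2$. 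Monotonicity in (iii) then drops out of the identity $\lambda(A_\lambda(w)-A_\lambda(\tilde w),w-\tilde w)=\|w-\tilde w\|^2-(u-\tilde u,w-\tilde w)$ together with (i), while (iv) is merely a restatement of the resolvent inclusion.

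For (v), I compare $A_\lambda(w)\in \partial I(J_\lambda(w))$ with $z_0=A^0(w)\in\partial I(w)$ via monotonicity to obtain $(A_\lambda(w)-z_0,\,J_\lambda(w)-w)\ge 0$, i.e.\ $\lambda\|A_\lambda(w)\|^2\le \lambda(z_0,A_\lambda(w))\le\lambda\|z_0\|\|A_\lambda(w)\|$. The existence and uniqueness of the minimum-norm element $A^0(w)$ in the nonempty, closed and convex set $\partial I(w)$ (Lemma \ref{0-lem}(ii)) is the standard projection property in Hilbert space.

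For (vi), when $w\in D(\partial I)$ the estimate $\|w-J_\lambda(w)\|=\lambda\|A_\lambda(w)\|\le \lambda\|A^0(w)\|\to 0$ from (v) is immediate. To pass to arbitrary $w\in\overline{D(\partial I)}=\overline{D(I)}$ (Lemma \ref{pro-dense}), I pick $w_n\in D(\partial I)$ with $w_n\to w$ and use (i) to write
\[
\|J_\lambda(w)-w\|\le 2\|w-w_n\|+\lambda\|A^0(w_n)\|,
\]
sending $\lambda\to 0^+$ for each fixed $n$ and then $n\to\infty$.

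For (vii), convexity of $I_\lambda$ follows by using $tJ_\lambda(w_1)+(1-t)J_\lambda(w_2)$ as a competitor in the minimization defining $I_\lambda(tw_1+(1-t)w_2)$ and combining convexity of $I$ with that of $\|\cdot\|^2$. The core of the argument is the Fr\'echet differentiability with $I_\lambda'(w)=A_\lambda(w)$, which I would establish by a two-sided sandwich: testing the minimization at $\tilde w$ with the competitor $J_\lambda(w)$ yields $I_\lambda(\tilde w)-I_\lambda(w)\le (A_\lambda(w),\tilde w-w)+\tfrac{1}{2\lambda}\|\tilde w-w\|^2$, and the symmetric comparison combined with the continuity of $A_\lambda$ from (ii) supplies the matching lower bound. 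The $C^{1,1}$ regularity then follows from (ii). Finally, convergence $I_\lambda(w)\to I(w)$ for $w\in\overline{D(I)}$ drops out of the sandwich (\ref{min-0}): the upper bound $I(w)\ge I_\lambda(w)$ is trivial, and the lower bound $I_\lambda(w)\ge I(J_\lambda(w))$ combined with (vi) and the lower semicontinuity of $I$ gives $\liminf_{\lambda\to 0^+} I_\lambda(w)\ge I(w)$; in the case $w\in\overline{D(I)}\setminus D(I)$ the same argument forces $I_\lambda(w)\to\infty=I(w)$. I expect the differentiability step in (vii) to be the main subtlety, since identifying $A_\lambda(w)$ as the actual Fr\'echet derivative requires the sharp two-sided sandwich and cannot be read off from convexity and the subgradient inequality alone.
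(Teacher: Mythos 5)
Your proposal is correct and follows the paper's proof closely: items (i)--(vi) are proved by exactly the same resolvent-plus-monotonicity manipulations, and your two-sided sandwich for differentiability in (vii) is an equivalent reformulation of the paper's inequality $0\le I_\lambda(v)-I_\lambda(w)-(A_\lambda(w),v-w)\le\frac{1}{\lambda}\|v-w\|^2$. The one genuine (but minor) variation is your proof of convexity of $I_\lambda$: you test the infimal convolution directly with the competitor $tJ_\lambda(w_1)+(1-t)J_\lambda(w_2)$ and invoke convexity of $I$ and $\|\cdot\|^2$, whereas the paper derives convexity as a corollary of the subgradient-type lower bound $I_\lambda(v)-I_\lambda(w)\ge(A_\lambda(w),v-w)$; both are standard and correct, your version being slightly more self-contained while the paper's reuses machinery already set up for the differentiability step.
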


\begin{proof}
1. Let $u=J_\l(w)$ and $ \tilde u=J_\l(\tilde w).$ Then $u+\l v=w$ and $ \tilde u +\l \tilde v =\tilde w$ for some $v\in \partial I(u)$ and $\tilde v\in \partial I(\tilde u);$ thus,
$
v=A_\l(w)$ and $ \tilde v=A_\l(\tilde w).$ Moreover,
\[
\begin{split}
\|w-\tilde w\|^2 & = \|u-\tilde u +\l(v-\tilde v)\|^2
=\|u-\tilde u\|^2 +2\l (u-\tilde u,v-\tilde v) + \l^2 \|v-\tilde v\|^2 \\
& \ge  \|u-\tilde u\|^2  + \l^2 \|v-\tilde v\|^2
\end{split}
\]
by the monotonicity of $\partial I.$ From this, both (i) and (ii) follow easily.

2.  Following the above, we have
\[
\begin{split}
(A_\l(w)-A_\l(\tilde w), w-\tilde w) & =(v-\tilde v, w-\tilde w)=\frac{1}{\l}(w-\tilde w + \tilde u-u, w-\tilde w)\\
& =\frac{1}{\l} (\|w-\tilde w\|^2 -(w-\tilde w,u-\tilde u))\\
& \ge \frac{1}{\l} (\|w-\tilde w\|^2 -\|w-\tilde w\|\|u-\tilde u\|)\ge 0,
\end{split}
\]
which proves (iii).  Also, it is easily seen that (iv) follows  because $u=J_\l(w)$ is such that $w\in u+\l \partial I(u)$ and thus $ A_\l(w)=\frac{w-u}{\l} \in \partial I(u)=\partial I(J_\l(w)).$

3. We prove (v). Let  $w\in D(\partial I)$, and fix any $\l>0$ and $z\in \partial I(w).$ Since $A_\l(w)\in \partial I(J_\l(w))$, by the monotonicity of $\partial I$, we have
\[
0\le (w-J_\l(w),z-A_\l(w))= \l(A_\l(w), z-A_\l(w)),
\]
which yields that $ \|A_\l(w)\|^2 \le (z,A_\l(w))\le \|A_\l(w)\|\|z\|$; thus
$
\|A_\l(w)\|\le \|z\|.$  So
\[
\sup_{\l>0} \|A_\l(w)\|\le \inf_{z\in \partial I(w)} \|z\|<\infty.
\]
Since $\partial I(w)$ is closed and convex and $\|z\|$ is strictly convex, there exists a unique $z_0=A^0(w)\in \partial I(w)$ such that
\[
\|z_0\|=\min_{z\in \partial I(w)} \|z\|.
\]

4. We prove (vi).  If $w\in D(\partial I)$, then
\[
\|J_\l(w)-w\|=\l \|A_\l(w)\|\le \l \|A^0(w)\|,
\]
and hence $J_\l(w)\to w$ as $\l\to 0^+.$ Now let $w\in \overline{D(\partial I)}\setminus D(\partial I).$ For each $\epsilon>0$, there exists a point $w_\epsilon\in D(\partial I)$ with $\|w_\epsilon-w\|<\e.$ Then
\[
\begin{split}
\|J_\l(w)-w\|& \le \|J_\l(w)-J_\l(w_\e)\|+\|J_\l(w_\e)-w_\e\|+\|w-w_\e\| \\
& \le 2 \|w-w_\e\|+\|J_\l(w_\e)-w_\e\|<2\e +\|J_\l(w_\e)-w_\e\|.
\end{split}
\]
As $w_\e\in D(\partial I)$, we have
\[
\limsup_{\l\to 0^+} \|J_\l(w)-w\|\le 2 \e
\]
for each $\e>0;$ hence $J_\l(w)\to w$ as $\l\to 0^+.$

5. Finally we prove (vii). Since $A_\l(w)\in\partial I(J_\l(w))$ and $J_\l(w)=w-\l A_\l(w)$, we have, for all $v, w \in H$,
\[
\begin{split}
I(J_\l(v))-I(J_\l(w))& \ge (A_\l(w), J_\l(v)-J_\l(w)) \\
& =\l \|A_\l(w)\|^2 +(A_\l(w),v-w) -\l(A_\l(w),A_\l(v)).
\end{split}
\]
Hence, by (\ref{min-0}), for all $v, w \in H$,
\[
\begin{split}
I_\l(v)-I_\l(w) & \ge  \frac{\l}{2} \|A_\l(v)\|^2 +\frac{\l}{2} \|A_\l(w)\|^2 +(A_\l(w),v-w) -\l(A_\l(w),A_\l(v)) \\
& =\frac{\l}{2}  \|A_\l(w)-A_\l(v)\|^2 +(A_\l(w),v-w)
\ge  (A_\l(w),v-w).
\end{split}
\]
Let $v_1,v_2\in H$ and $t\in (0,1);$ set $w=tv_1+(1-t)v_2.$ Then $I_\l(v_k)-I_\l(w)\ge (A_\l(w),v_k-w)$ for $k=1,2;$ thus,  $tI_\l(v_1)+(1-t)I_\l(v_2) -I_\l(w)\ge (A_\l(w),tv_1+(1-t)v_2-w)=0;$  this proves the convexity of $I_\l.$

Interchanging $v$ and $w$ yields
\[
I_\l(v)-I_\l(w)\le (A_\l(v), v-w)
\]
and hence
\[
0\le I_\l(v)-I_\l(w)-(A_\l(w),v-w)\le (A_\l(v)-A_\l(w),v-w)\le \frac{1}{\l} \|v-w\|^2.
\]
Therefore, $I_\l$ is differentiable at $w$ with derivative $I_\l'(w)=A_\l(w)$.

Now assume $w\in \overline{D(\partial I)}.$  By (\ref{min-0}), we have
$
I(J_\l(w))\le I_\l(w) \le I(w),
$
and by (vi),  $J_\l(w)\to w$ as $\l\to 0^+.$ Thus, the lower semicontinuity  of $I$ implies
\[
I(w)\le \liminf_{\l\to 0^+} I(J_\l(w))\le \liminf_{\l\to 0^+} I_\l(w)\le \limsup_{\l\to 0^+} I_\l(w)\le I(w),
\]
which confirms (vii).  \end{proof}

\begin{pro}  Let $A^0\colon D(\partial I)\to H$ be defined above in Theorem \ref{Yosida}(v). Then
 \begin{equation}
\lim_{\l\to 0^+} A_\l (w)=A^0(w) \quad \forall\, w\in D(\partial I).
\label{rk-2.16}\end{equation}
\end{pro}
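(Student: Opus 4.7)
The plan is to show first that $A_\lambda(w) \rightharpoonup A^0(w)$ weakly, and then upgrade to strong convergence by a norm-convergence argument.

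First I would use Theorem \ref{Yosida}(v), which says $\|A_\lambda(w)\| \le \|A^0(w)\|$ for all $\lambda>0$. Hence the family $\{A_\lambda(w)\}_{\lambda>0}$ is bounded in $H$, and along any sequence $\lambda_k\to 0^+$ we can extract a subsequence (not relabeled) with $A_{\lambda_k}(w)\rightharpoonup z$ for some $z\in H$.

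The central step is to identify the weak limit $z$ as $A^0(w)$. For this I would use the maximal monotonicity of $\partial I$ established in Lemma \ref{maximal}. For every $x\in D(\partial I)$ and every $y\in\partial I(x)$, the monotonicity inequality applied to $A_{\lambda_k}(w)\in\partial I(J_{\lambda_k}(w))$ gives
\[
\bigl(A_{\lambda_k}(w)-y,\; J_{\lambda_k}(w)-x\bigr)\ge 0.
\]
Since $J_{\lambda_k}(w)\to w$ strongly in $H$ by Theorem \ref{Yosida}(vi), while $A_{\lambda_k}(w)\rightharpoonup z$ weakly, passing to the limit yields $(z-y,\,w-x)\ge 0$ for all such $x,y$. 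The maximality criterion in Lemma \ref{maximal} then forces $w\in D(\partial I)$ (already known) and $z\in\partial I(w)$. Combining this with weak lower semicontinuity of the norm and the bound from Theorem \ref{Yosida}(v) gives
\[
\|z\|\le \liminf_{k\to\infty}\|A_{\lambda_k}(w)\|\le \|A^0(w)\|,
\]
and since $A^0(w)$ is the \emph{unique} minimizer of $\|\cdot\|$ over the closed convex set $\partial I(w)$, we conclude $z=A^0(w)$. Because the weak subsequential limit is independent of the choice of subsequence, the full net satisfies $A_\lambda(w)\rightharpoonup A^0(w)$ as $\lambda\to 0^+$.

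Finally, I would upgrade weak to strong convergence. The preceding display, combined with $\|A_\lambda(w)\|\le \|A^0(w)\|$, yields
\[
\|A^0(w)\|\le \liminf_{\lambda\to 0^+}\|A_\lambda(w)\|\le \limsup_{\lambda\to 0^+}\|A_\lambda(w)\|\le \|A^0(w)\|,
\]
so $\|A_\lambda(w)\|\to \|A^0(w)\|$. In a Hilbert space, weak convergence together with convergence of norms implies strong convergence, so $A_\lambda(w)\to A^0(w)$ in $H$, which is (\ref{rk-2.16}). The main obstacle is the identification $z\in\partial I(w)$: this is precisely the step where the maximal monotonicity of the subdifferential (rather than mere monotonicity) is essential, since we only control $A_{\lambda_k}(w)\rightharpoonup z$ weakly while $J_{\lambda_k}(w)\to w$ strongly.
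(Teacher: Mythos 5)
Your proof is correct, and the overall skeleton (bounded family, extract weak limit $z$, show $z\in\partial I(w)$, conclude $z=A^0(w)$ by the minimality/uniqueness of $A^0(w)$, upgrade to strong convergence via convergence of norms) matches the paper's. The genuine difference is in the crucial identification step $z\in\partial I(w)$. The paper works directly with the subdifferential inequality: since $A_\lambda(w)\in\partial I(J_\lambda(w))$, it writes $I(v)\ge I(J_\lambda(w))+(A_\lambda(w),v-J_\lambda(w))$ for all $v\in H$, then passes to the limit using $J_\lambda(w)\to w$ and the \emph{lower semicontinuity of $I$}, obtaining $I(v)\ge I(w)+(z,v-w)$, i.e.\ $z\in\partial I(w)$. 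You instead invoke only the monotonicity inequality $(A_{\lambda_k}(w)-y,\,J_{\lambda_k}(w)-x)\ge 0$, pass to the limit (weak $\times$ strong pairing, with the first factor bounded so the cross-term vanishes), and then appeal to the maximal monotonicity Lemma~\ref{maximal} to conclude $z\in\partial I(w)$. Your route is more abstract and a bit more robust: it uses nothing about $I$ beyond the fact that $\partial I$ is maximal monotone, and would therefore apply verbatim to any maximal monotone operator. The paper's route is more concrete and self-contained within the subdifferential setting, at the cost of re-invoking lower semicontinuity of $I$. Both are valid; either way the final step is the same Hilbert-space fact that weak convergence plus convergence of norms implies strong convergence.
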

\begin{proof} Let $w\in D(\partial I)$ and $z_0=A^0(w).$ Note that $\|A_\l(w)\|\le \|z_0\|$ for all $\l>0.$  Let $\l\to 0^+$ be any subsequence such that $A_\l(w)\wcon z $   for some $z\in H$; then  $\|z\|\le \|z_0\|.$  By  Theorem \ref{Yosida}(iv),
\[
I(v)\ge I(J_\l(w))+(A_\l(w),v-J_\l(w))\quad \forall\, v\in H.
\]
So, letting $\l\to 0^+$ along this subsequence, by Theorem \ref{Yosida}(vi) and the lower semicontinuity  of $I$, we have
\[
I(v)\ge I(w) +(z,v-w) \quad \forall\, v\in H,
\]
which shows that $z\in \partial I(w);$ thus  by the uniqueness of $z_0=A^0(w) \in \partial I(w),$ we have $z=z_0.$ This shows that $A_\l(w)\wcon z_0$ as $\l\to 0^+,$ which, due to $\|A_\l(w)\|\le \|z_0\|,$
  implies the strong convergence  $A_\l(w)\to  z_0$ as $\l\to 0^+.$
\end{proof}

\subsection{A Useful  Chain Rule for Differentiation} We  study functions from an interval $\Lambda \subset \R$ into   $H.$ The usual definitions and  basic properties of spaces such as $C^k(\Lambda;H),\, L^p(\Lambda;H)$ and $W^{1,p}(\Lambda;H)$ can be found, e.g.,  in the textbook \cite{Ev-b}.

 We  prove  the following  useful  chain rule for differentiation; see, e.g., \cite[Lemma 4.4]{Bar}.

\begin{lem}\label{chain-rule} Let $I\colon H\to \bar \R$ be proper, convex and lower semicontinuous. Assume $u \in C^{0}([0,T];H)$ with $u'\in L^2(0,T;H)$  and $g\in L^2(0,T;H)$ are such that
$
 g(t)\in \partial I(u(t))$ for  a.e.\;$ t\in (0,T).
$
Then $I(u(t))$ is absolutely continuous on $[0,T]$, and
 \[
 \frac{d}{dt} I(u(t))= (g(t),u'(t)), \quad a.e.\; t\in (0,T).
 \]
 \end{lem}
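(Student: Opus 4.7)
The plan is to reduce the claim to the Moreau envelope $I_\l$, which by Theorem \ref{Yosida}(vii) is convex, $C^{1,1}$ on $H$, and satisfies $I_\l'=A_\l$.  Since $u\in C^0([0,T];H)$ with $u'\in L^2(0,T;H)$, the curve $u$ is absolutely continuous into $H$, so for each fixed $\l>0$ the composition $t\mapsto I_\l(u(t))$ is absolutely continuous and the standard chain rule for a $C^{1,1}$ real-valued function composed with an absolutely continuous curve gives
\[
I_\l(u(t))-I_\l(u(s))=\int_s^t \bigl(A_\l(u(\tau)),u'(\tau)\bigr)\,d\tau,\qquad 0\le s\le t\le T.
\]

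Next, I would pass to the limit $\l\to 0^+$ in both sides.  For the left-hand side, the hypothesis $g(\tau)\in\partial I(u(\tau))$ a.e.\ forces $u(\tau)\in D(\partial I)\subseteq D(I)$ on a dense subset of $[0,T]$; continuity of $u$ then places every $u(t)$ in $\overline{D(\partial I)}=\overline{D(I)}$ (Lemma \ref{pro-dense}), and Theorem \ref{Yosida}(vii) gives $I_\l(u(t))\to I(u(t))$ pointwise.  For the right-hand side, at every $\tau$ with $g(\tau)\in\partial I(u(\tau))$, Theorem \ref{Yosida}(v) yields the uniform bound $\|A_\l(u(\tau))\|\le\|g(\tau)\|$, and (\ref{rk-2.16}) gives $A_\l(u(\tau))\to A^0(u(\tau))$.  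Since $\|g(\tau)\|\,\|u'(\tau)\|\in L^1(0,T)$ dominates the integrand $(A_\l(u(\tau)),u'(\tau))$, dominated convergence yields
\[
I(u(t))-I(u(s))=\int_s^t \bigl(A^0(u(\tau)),u'(\tau)\bigr)\,d\tau.
\]
In particular, $t\mapsto I(u(t))$ is absolutely continuous on $[0,T]$.

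Finally, I would identify the derivative using the subdifferential inequality directly.  At almost every $t_0\in(0,T)$, simultaneously, $I(u(\cdot))$ is differentiable at $t_0$, $u$ is differentiable at $t_0$, and $g(t_0)\in\partial I(u(t_0))$.  The subdifferential inequality at $u(t_0)$, applied with the test point $w=u(t_0+h)$, reads
\[
I(u(t_0+h))-I(u(t_0))\ge \bigl(g(t_0),u(t_0+h)-u(t_0)\bigr).
\]
Dividing by $h>0$ and sending $h\to 0^+$ yields $\tfrac{d}{dt}I(u(t))\big|_{t_0}\ge (g(t_0),u'(t_0))$; dividing by $h<0$ (which reverses the inequality) and sending $h\to 0^-$ yields the reverse bound.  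Equality then holds, and together with the absolute continuity established above this proves the lemma.

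The main obstacle is the passage to the limit in the second step: one cannot hope that $A_\l(u(\tau))$ converges to the chosen measurable selection $g(\tau)$ but only to the minimum-norm selection $A^0(u(\tau))$, so the Moreau-Yosida step alone produces the integral formula with $A^0$ in place of $g$.  It is precisely the subdifferential-inequality argument of the last step that upgrades ``$A^0$'' to ``any measurable selection $g(\tau)\in\partial I(u(\tau))$,'' revealing as a byproduct that $(A^0(u(t))-g(t),u'(t))=0$ for a.e.\ $t$ even though $A^0(u(t))$ need not coincide with $g(t)$.
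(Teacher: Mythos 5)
Your proof follows the same route as the paper: establish the Moreau--Yosida identity
\[
I_\l(u(t))-I_\l(u(s))=\int_s^t\bigl(A_\l(u(\tau)),u'(\tau)\bigr)\,d\tau,
\]
pass to the limit $\l\to 0^+$ via Theorem \ref{Yosida}(v)(vii), Proposition \ref{rk-2.16} and dominated convergence to obtain the integral identity with $A^0$, and then identify the derivative as $(g(t_0),u'(t_0))$ at a.e.\ $t_0$ via the subdifferential inequality applied with the test points $u(t_0\pm h)$. Your closing remark that the Moreau--Yosida step only gives $A^0$, and that the final subdifferential argument is what upgrades this to an arbitrary measurable selection $g$, is exactly the point made implicitly by the paper.

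There is one step you slide past. After dominated convergence you immediately write
\[
I(u(t))-I(u(s))=\int_s^t\bigl(A^0(u(\tau)),u'(\tau)\bigr)\,d\tau
\quad\text{for all } 0\le s\le t\le T,
\]
but this subtraction is only meaningful once you know $I(u(t))<\infty$ for \emph{every} $t\in[0,T]$, including the endpoints $t=0$ and $t=T$, which is not part of the hypotheses (the hypothesis only gives $u(\tau)\in D(\partial I)$ for a.e.\ $\tau$). Theorem \ref{Yosida}(vii) gives $I_\l(u(t))\to I(u(t))$ in the extended sense, so if $I(u(t))=+\infty$ for some $t$, the difference on the left is an $\infty-\infty$ indeterminate. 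The paper handles this explicitly with a lower-semicontinuity argument: starting from $s,t_n$ in the a.e.\ set where $u$ lies in $D(\partial I)$, it shows $I(u(t))\le\liminf I(u(t_n))<\infty$, hence $u(t)\in D(I)$ for all $t\in(0,T]$, and then that $u(0)\in D(I)$ as well. Your setup actually contains the means to close this gap without invoking lsc again: pick any $s$ in the full-measure set where $u(s)\in D(I)$; then $I_\l(u(s))$ converges to a finite limit, and since $I_\l(u(t))-I_\l(u(s))$ also converges to a finite limit (the integral), $I_\l(u(t))$ converges to a finite limit for every $t$, which by Theorem \ref{Yosida}(vii) equals $I(u(t))$. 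You should state this explicitly; otherwise the claimed absolute continuity of $I(u(\cdot))$ on the closed interval $[0,T]$ is not fully justified.
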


\begin{proof} Let $I_\l$ be the Moreau envelope. Then $(I_\l(u(t)))'=(I'_\l(u(t)),u'(t))$ for a.e. $t\in(0,T)$, and for all $0\le s<t\le T,$
 \begin{equation}\label{eq-28}
 I_\l(u(t))-I_\l(u(s))=\int_s^t (I'_\l(u(\tau)),u'(\tau)))d\tau=\int_s^t (A_\l(u(\tau)),u'(\tau))\, d\tau.
 \end{equation}
Since $g(\tau)\in \partial I(u(\tau))$ for a.e.\,$\tau\in (0,T),$ we have
 \[
 u(\tau)\in D(\partial I)\;\;\mbox{and}\;\;  \|A_\l(u(\tau))\|  \le \|A^0(u(\tau))\|\le \|g(\tau)\|,  \quad  a.e.\;\tau\in (0,T),\]
 and hence
 \[
  \|(A_\l(u),u')  \|   \le  \|A^0(u)\| \|u'\| \le \|g\|\|u'\|\in L^1(0,T).
 \]
Letting $\l\to 0^+$ in (\ref{eq-28}), by Theorem \ref{Yosida}(vii), Proposition \ref{rk-2.16} and the Lebesgue dominated convergence theorem, we see that the identity
 \begin{equation}\label{eq-29}
 I(u(t))=I(u(s))+\int_s^t (A^0(u(\tau)),u'(\tau))\,d\tau
\end{equation}
holds as long as $0\le s<t\le T$ and $u(t),u(s)\in  D(I).$

Let us check that the identity (\ref{eq-29}) holds for all $0\le s<t\le T.$ First, take any $s\in (0,T)$ such that $g(s)\in \partial I(u(s))$; hence,  $u(s)\in D(\partial I).$ Let $t\in (s,T]$ and  $t_n\in (s,T)$ be such that $t_n\to t$ and $g(t_n)\in \partial I(u(t_n))$; hence,  $u(t_n)\in D(\partial I)$  and $u(t_n)\to u(t).$ In (\ref{eq-29}) with $t=t_n$, by the lower semicontinuity of $I$, we have
\[
I(u(t))\le \liminf_{n\to\infty} I(u(t_n)) =I(u(s))+\int_s^t (A^0(u(\tau)),u'(\tau))\,d\tau<\infty.
\]
This proves that $u(t)\in D(I)$ for all $t\in (s,T].$ Since $s\in (0,T)$ can be arbitrarily small, we have $u(t)\in D(I)$ for all $t\in (0,T]$, and hence   (\ref{eq-29}) holds for all $0<s<t\le T;$ this also shows that
\[
a=\lim_{s\to 0^+} I(u(s))
\]
exists in $\R$. Hence $I(u(0))\le a<\infty$ and so $u(0)\in D(I).$ Therefore,  identity (\ref{eq-29}) holds for all $0\le s<t\le T,$ which proves that the function $I(u(t))$ is absolutely continuous on $[0,T].$

Next, let $t_0\in (0,T)$ be such that  $g(t_0)\in \partial I(u(t_0))$ and that both $u(t)$ and $I(u(t))$ are differentiable at $t=t_0.$ Then we have
 \[
 I(u(t_0))\le I(v)+(g(t_0), u(t_0)-v)\quad \forall\, v\in H.
 \]
 Take $v=u(t_0\pm \epsilon)$ with $\epsilon \to 0^+$ and use difference quotient  in the usual way  to obtain
 \[
 \frac{d}{dt}I(u(t))\Big |_{t=t_0}=(g(t_0),u'(t_0)).
 \]
 \end{proof}

\subsection{Nonlinear Semigroups  Generated by $\partial I$}

In this subsection, as before, we assume that $I\colon H\to \bar \R $ is a proper, convex  and lower semicontinuous functional.

First we have the following definition.

\begin{defn} Let $D\subseteq H.$ A family $\{S(t)\}_{t\ge 0}$ of mappings from $D$ into $D$ is called a  {\bf $C^0$-semigroup} on $D$
 if  \begin{equation}\label{semi-G}
\begin{cases} S(0)u_0=u_0 \quad \forall\, u_0\in D,\\
S(t+s)u_0 =S(t)S(s)u_0 \quad \forall\, t, s\ge 0,\;\forall\, u_0\in D,\\
\mbox{$u(t)= S(t)u_0\colon [0,\infty)\to H$ is continuous for each $u_0\in D.$}\end{cases}
 \end{equation}
   If, in addition,
\[
\|S(t)u_0-S(t)v_0\|\le \|u_0-v_0\| \quad \forall\, t\ge 0,\; \forall\, u_0,v_0 \in D,
\]
then  $\{S(t)\}_{t\ge 0}$ is called a  {\bf $C^0$-semigroup of contractions} on $D.$
\end{defn}

\begin{thm}\label{G-flow} For each $u_0\in D(\partial I),$ there exists a unique function
\begin{equation}\label{strong}
\mbox{$u\in C([0,\infty);H)$\; with\; $u'\in L^\infty(0,\infty;H)$}
\end{equation}
and $u(t)\in D(\partial I)$ for all $t\ge 0$ that solves the Cauchy problem:
\begin{equation}\label{CP}
\begin{cases}
u'(t)\in -\partial I(u(t)), \quad \mbox{a.e.\;$t\ge 0$},\\
  u(0)=u_0.
\end{cases}
\end{equation}
Let $u(t)=S_0(t)u_0.$ Then $\{S_0(t)\}_{t\ge 0}$ is a $C^0$-semigroup of contractions on $D(\partial I).$
 \end{thm}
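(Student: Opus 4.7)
My plan is to follow the classical Br\'ezis--Komura construction via Yosida approximation. Fix $u_0\in D(\partial I)$. I will first solve the regularized Cauchy problem
\[
u_\lambda'(t)=-A_\lambda(u_\lambda(t)),\qquad u_\lambda(0)=u_0,
\]
which admits a unique global $C^1$ solution by Picard--Lindel\"of, since $A_\lambda$ is $(1/\lambda)$-Lipschitz on $H$ by Theorem \ref{Yosida}(ii). The crucial uniform estimate is $\|u_\lambda'(t)\|\le \|A^0(u_0)\|$ for every $t\ge 0$ and $\lambda>0$: the monotonicity of $A_\lambda$ (Theorem \ref{Yosida}(iii)) implies that $t\mapsto \|u_\lambda(t+h)-u_\lambda(t)\|$ is nonincreasing for each $h>0$, so dividing by $h$ and letting $h\to 0^+$ yields $\|u_\lambda'(t)\|\le \|u_\lambda'(0)\|=\|A_\lambda(u_0)\|\le \|A^0(u_0)\|$ by Theorem \ref{Yosida}(v).

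Next I will show $\{u_\lambda\}$ is Cauchy in $C([0,T];H)$ for each $T>0$. Decomposing $u_\lambda-u_\mu=(J_\lambda u_\lambda-J_\mu u_\mu)+\lambda A_\lambda u_\lambda-\mu A_\mu u_\mu$ and combining $A_\lambda u_\lambda\in \partial I(J_\lambda u_\lambda)$ (Theorem \ref{Yosida}(iv)) with monotonicity of $\partial I$ (Lemma \ref{0-lem}(iii)),
\[
(A_\lambda u_\lambda-A_\mu u_\mu,\,u_\lambda-u_\mu)\ge (A_\lambda u_\lambda-A_\mu u_\mu,\,\lambda A_\lambda u_\lambda-\mu A_\mu u_\mu)\ge -(\lambda+\mu)\|A^0(u_0)\|^2.
\]
Since $\tfrac12\tfrac{d}{dt}\|u_\lambda-u_\mu\|^2=-(A_\lambda u_\lambda-A_\mu u_\mu,u_\lambda-u_\mu)$, integration yields $\|u_\lambda(t)-u_\mu(t)\|^2\le 2(\lambda+\mu)\|A^0(u_0)\|^2 t$. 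The limit $u\in C([0,\infty);H)$ inherits the Lipschitz constant $\|A^0(u_0)\|$, so $u'\in L^\infty(0,\infty;H)$.

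The third step is to verify the limit inclusion. For each fixed $t\ge 0$, $J_\lambda u_\lambda(t)=u_\lambda(t)-\lambda A_\lambda u_\lambda(t)\to u(t)$ strongly in $H$ while $\{A_\lambda u_\lambda(t)\}$ is bounded; extracting a weak limit $z(t)$ along a subsequence and applying Lemma \ref{maximal} to the inclusion $A_\lambda u_\lambda(t)\in\partial I(J_\lambda u_\lambda(t))$ yields $z(t)\in\partial I(u(t))$, so $u(t)\in D(\partial I)$ for every $t\ge 0$. To obtain $-u'(t)\in\partial I(u(t))$ a.e., I extract $-u_\lambda'=A_\lambda u_\lambda \rightharpoonup g$ weakly in $L^2(0,T;H)$; testing the monotonicity inequality $(A_\lambda u_\lambda-y,\,J_\lambda u_\lambda-x)\ge 0$ against nonnegative $\phi\in L^2(0,T)$ for arbitrary $(x,y)$ with $y\in\partial I(x)$ and passing to the limit, using weak convergence of $A_\lambda u_\lambda$ together with strong convergence of $J_\lambda u_\lambda$ in $L^2(0,T;H)$, produces $(g(t)-y,u(t)-x)\ge 0$ a.e. Lemma \ref{maximal} then forces $g(t)\in\partial I(u(t))$ a.e., and passing to the limit in $u_\lambda(t)=u_0+\int_0^t u_\lambda'$ identifies $-u'=g$ a.e.

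Uniqueness and the contraction property both follow from monotonicity: for two solutions $u,v$ with initial data $u_0,v_0\in D(\partial I)$,
\[
\tfrac12\tfrac{d}{dt}\|u(t)-v(t)\|^2=(u'(t)-v'(t),u(t)-v(t))\le 0,
\]
so $\|u(t)-v(t)\|\le \|u_0-v_0\|$. Uniqueness follows by taking $u_0=v_0$; contractivity yields the $C^0$-semigroup of contractions, and the identity $S_0(t+s)=S_0(t)S_0(s)$ is immediate from uniqueness applied to the time-translated solution $t\mapsto S_0(t+s)u_0$. The most delicate step is the pointwise identification $u'(t)\in-\partial I(u(t))$ in the limit, since closing the graph of $\partial I$ under the mixed weak--strong convergence of $A_\lambda u_\lambda$ and $J_\lambda u_\lambda$ is precisely what requires the maximal monotonicity established in Lemma \ref{maximal}.
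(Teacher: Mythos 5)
Your proposal follows the same Br\'ezis--Komura/Yosida-approximation strategy as the paper: solve the regularized ODE, obtain the uniform bound $\|u_\lambda'(t)\|\le\|A^0(u_0)\|$ from the contraction property of $S_\lambda$, establish the Cauchy estimate in $\lambda$ via the splitting $u_\lambda-u_\mu=(J_\lambda u_\lambda-J_\mu u_\mu)+\lambda A_\lambda u_\lambda-\mu A_\mu u_\mu$ and monotonicity, and close with uniqueness and contractivity via $\tfrac{d}{dt}\|u-v\|^2\le 0$. The place where you genuinely diverge from the paper is the passage to the limiting differential inclusion. The paper integrates the subdifferential inequality $I(w)\ge I(J_\lambda u_\lambda(t))-(u_\lambda'(t),w-J_\lambda u_\lambda(t))$ over $[s,t]$, sends $\lambda\to0^+$ by lower semicontinuity and Fatou, and then obtains the pointwise inequality $I(w)\ge I(u(t))+(-u'(t),w-u(t))$ at ($H$-valued) Lebesgue points of $u$, $u'$ and $I(u)$; it proves $u(t)\in D(\partial I)$ for all $t>0$ afterward by a separate sequential weak-compactness argument. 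You instead invoke Lemma \ref{maximal} twice: pointwise in $t$ (using the weak--strong pairing of $A_\lambda u_\lambda(t)$ and $J_\lambda u_\lambda(t)$) to get $u(t)\in D(\partial I)$ for every $t$, and again in time-integrated form to obtain $-u'\in\partial I(u)$ a.e. This is a legitimate alternative; your pointwise argument for $u(t)\in D(\partial I)$ is in fact cleaner than the paper's Step 5.

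There is, however, a gap in your a.e.\ identification that deserves attention. From $\int_0^T\phi(t)\,(g(t)-y,u(t)-x)\,dt\ge0$ for all nonnegative $\phi\in L^2(0,T)$ you conclude $(g(t)-y,u(t)-x)\ge0$ for a.e.\ $t$, but the exceptional null set depends on the fixed pair $(x,y)$. To apply Lemma \ref{maximal} at a given $t$ you need the inequality simultaneously for \emph{all} $(x,y)$ with $y\in\partial I(x)$. This requires an extra step: either exploit the separability of $H=L^2(\Omega;\R^N)$ to pick a countable dense subset of the graph of $\partial I$, obtain a common full-measure set, and then close by strong continuity of the inner product; or follow the paper's route, which sidesteps the issue because the Lebesgue-point set of the $H$-valued functions $u$, $u'$ and the scalar $I(u)$ is a single full-measure set independent of $w$. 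As written, the jump is unjustified, though it is repairable with the separability observation. The rest of the argument (the constant in the Cauchy estimate, the identification $-u'=g$ from the integral formula, the semigroup properties via uniqueness) is sound.
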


 \begin{proof} 1. Let $\l>0.$ Since $A_\l\colon H\to H$ is $\frac{1}{\l}$-Lipschitz continuous,  for each $u_0\in H,$ the ODE
 \begin{equation}\label{app-Yosida}
\begin{cases}
u_\l'(t)+A_\l(u_\l(t))=0 \quad \forall\, t\ge 0,\\
u_\l(0)=u_0
\end{cases}
\end{equation}
 has a unique solution $u_\l\in C^1([0,\infty);H).$ We write this solution as
$
u_\l(t)=S_\l(t)u_0.
$
By uniqueness of solutions, it follows that $\{S_\l(t)\}_{t\ge 0}$ is a $C^0$-semigroup on $H$.
If $v_0\in H$ and $v_\l(t)=S_\l(t)v_0$, then, by the monotonicity of $A_\l$, we have
\[
\frac12 \frac{d}{dt} \|u_\l(t)-v_\l(t)\|^2 = (u_\l'-v_\l', u_\l-v_\l)=-(A_\l(u_\l)-A_\l(v_\l), u_\l-v_\l)\le 0,
\]
and thus, integrating, we have
 \begin{equation}\label{c-tr}
\|S_\l(t)u_0-S_\l(t)v_0\|\le \|u_0-v_0\| \quad \forall\, t\ge 0, \;\forall\, u_0, v_0\in H.
\end{equation}
This shows that $\{S_\l(t)\}_{t\ge 0}$ is a  $C^0$-semigroup of contractions on $H.$  Note that $u_\l(t)$ is exactly the gradient flow for the differentiable functional $I_\l$ on $H$; hence
\[
\frac{d}{dt} I_\l(u_\l(t))= (I_\l'(u_\l),u_\l')=-\|A_\l(u_\l)\|^2.
\]

Our plan is to show that if $u_0\in D(\partial I),$ then, as $\l\to 0^+$,  $S_\l (t)u_0$ converges to a unique function $u(t)=S_0(t)u_0$ satisfying (\ref{CP})  and that $\{S_0(t)\}_{t\ge 0}$ is a  $C^0$-semigroup of contractions on $D(\partial I).$

2. Now assume $u_0\in D(\partial I)$ and $t\ge 0.$ Let $h>0$ and $v_0=u_\l(h)$; then,  $v_\l(t)=S_\l(t)v_0=u_\l(t+h)$ and hence, by (\ref{c-tr}),
\[
\|u_\l(t+h)-u_\l(t)\|\le \|u_\l(h)-u_0\|.
\]
Dividing by $h$ and sending $h\to 0^+$, we have
 \begin{equation}
\|A_\l(u_\l(t))\|=\|u_\l'(t)\|\le \|u_\l'(0)\|=\|A_\l(u_0)\|\le \|A^0(u_0)\|,
\label{gflow-1}\end{equation}
by Theorem \ref{Yosida}(v).

3. Next we take $\l, \mu>0$ and observe
\begin{equation}\label{e-0}
\frac12 \frac{d}{dt} \|u_\l(t)-u_\mu (t)\|^2 = (u_\l'-u_\mu', u_\l-u_\mu) =-(A_\l(u_\l)-A_\mu (u_\mu), u_\l-u_\mu).
\end{equation}
By the definition of $A_\lambda$ and $A_\mu$,
\[
u_\l-u_\mu =\l A_\l(u_\l) +J_\l(u_\l)-J_\mu(u_\mu)-\mu A_\mu(u_\mu).
\]
So we deduce

\[
\begin{split}
(A_\l(u_\l)-A_\mu (u_\mu), u_\l-u_\mu) =  & \, (A_\l(u_\l)-A_\mu (u_\mu), J_\l(u_\l)-J_\mu(u_\mu)) \\
& \,+ (A_\l(u_\l)-A_\mu (u_\mu), \l A_\l(u_\l)-\mu A_\mu(u_\mu))\\
\ge &\, -(\l+\mu)(\|A_\l(u_\l)\|+\|A_\mu(u_\mu)\|)^2 \\
\ge &\, -4(\l+\mu)\|A^0(u_0)\|^2,
\end{split}
\]
where, in the first inequality, we have used $(A_\l(u_\l)-A_\mu (u_\mu), J_\l(u_\l)-J_\mu(u_\mu))\ge 0$ from the monotonicity of $\partial I$ and $A_\l(u_\l)\in\partial I(J_\l(u_\l));$ thus, reflecting this on (\ref{e-0}), we have
\[
\frac{d}{dt} \|u_\l(t)-u_\mu (t)\|^2\le 8(\l+\mu)\|A^0(u_0)\|^2 \quad \forall\, t\ge 0
\]
so that
\begin{equation}
\|u_\l(t)-u_\mu(t)\|^2 \le 8(\l+\mu) t \|A^0(u_0)\|^2 \quad \forall\, t\ge 0.
\label{gflow-2}\end{equation}
This proves that for each $T>0,$ $\{u_\l\}_{\l>0}$ is Cauchy in $C([0,T];H)$ along any sequence $\l\to 0^+.$ Therefore, there exists a function $u\in C([0,\infty);H)$ such that for each $T>0,$
\[
u_\l  \to u \quad\mbox{in $C([0,T];H)$}
\]
as $\l\to 0^+.$  We write
$u(t)=S_0(t)u_0.$  Furthermore,   (\ref{gflow-1}) implies that for each $T>0,$
\[
u_\l'\wcon u' \quad \mbox{in $L^2(0,T; H)$},
\]
and thus
\[
\|u'(t)\|\le \|A^0(u_0)\|, \quad \mbox{a.e.\;$t\ge 0$}.
\]
Hence $u'\in L^\infty(0,\infty;H).$

4. We  prove  (\ref{CP}).    Clearly, $u(0)=u_0.$  To prove the differential inclusion in  (\ref{CP}), we observe that
\[
\|J_\l(u_\l(t))-u_\l(t)\|=\l \|A_\l(u_\l(t))\|=\l\|u_\l'(t)\|\le \l\|A^0(u_0)\|
\]
and hence,  for each $T>0$, $J_\l(u_\l)\to u$ in $C([0,T];H)$ as $\l\to 0^+$. For each $t\ge 0$,
\[
-u_\l'(t)=A_\l(u_\l(t))\in \partial I(J_\l(u_\l(t))).
\]
Thus, given $w\in H$, we have
\[
I(w) \ge I(J_\l(u_\l(t))) -(u_\l'(t), w-J_\l(u_\l(t))).
\]
Consequently, if $0\le s\le t$,
\[
(t-s)I(w) \ge \int_s^t I(J_\l(u_\l(r)))dr -\int_s^t (u_\l'(r), w-J_\l(u_\l(r)))dr.
\]
Sending $\l\to 0^+$, by the lower semicontinuity  of $I$  and Fatou's lemma, which is applicable due to the lower bound estimate (\ref{coer-I}),  we have
\[
(t-s)I(w) \ge \int_s^t I(u(r))dr -\int_s^t (u'(r), w-u(r))dr
\]
for all $0\le s\le t.$ Therefore,
\[
I(w)\ge I(u(t))+(-u'(t), w-u(t))\quad\forall\, w\in H
\]
if $t\in(0,\infty)$ is a Lebesgue point of $u(t)$, $u'(t)$ and $I(u(t)).$ This proves that
\[
u(t)\in D(\partial I)\quad\mbox{and}\quad u'(t)\in -\partial I(u(t)), \quad  \mbox{a.e.\;$t\ge 0$},
\]
which verifies  (\ref{CP}).

5. We now prove that $u(t)\in D(\partial I)$  for all $t >0.$  To see this, fix any $t>0$ and choose a sequence $t_k\to t$ in $(0,\infty)$ such that $u(t_k)\in D(\partial I)$ and $-u'(t_k)\in \partial I(u(t_k))$ for all $k\in\N$.  We may also assume that
\[
u'(t_k)\wcon v \quad \mbox{in $H$.}
\]
 Then
\[
I(w)\ge I(u(t_k)) +(-u'(t_k),w-u(t_k)) \quad \forall\, w\in H.
\]
Letting $k\to \infty$, by the continuity of $u$ and lower semicontinuity  of $I$, we obtain
\[
I(w)\ge I(u(t))+(-v,w-u(t)) \quad \forall\, w\in H,
\]
which proves that $-v\in \partial I(u(t))$ and thus $u(t)\in D(\partial I).$

6. We  prove the uniqueness of function $u.$ Let $\tilde u$  be any function with (\ref{strong}) that solves (\ref{CP}). Then
\[
\frac12 \frac{d}{dt} \|u(t)-\tilde u(t)\|^2 =(u'-\tilde u', u-\tilde u)\le 0, \quad  \mbox{a.e.\;$t\ge 0$}
\]
by the monotonicity of $\partial I$ since $-u'\in \partial I(u)$ and $-\tilde u'\in \partial I(\tilde u).$ Hence $\|u(t)-\tilde u(t)\|^2\le \|u(0)-\tilde u(0)\|^2=0$ for all $t\ge 0,$ which proves that $\tilde u=u.$

7. Finally, since $\{S_\l(t)\}_{t\ge 0}$ is  a  $C^0$-semigroup of contractions on $H$ for all $\l>0,$ it easily follows that $\{S_0(t)\}_{t\ge 0}$ is  a   $C^0$-semigroup of contractions  on $D(\partial I)$.
 This completes the  proof.
 \end{proof}

\begin{lem}\label{G-flow-2}  Given $u_0\in \overline{D(\partial I)}$ and $t\ge0,$
there exists $S(t) u_0 \in \overline{D(\partial I)}$ such that
\[
S_0(t)u_k \to S(t)u_0\quad\mbox{in $H$}
\]
for any sequence $\{u_k\}$ in $D(\partial I)$ with $u_k\to u_0.$
Furthermore, the family $\{S(t)\}_{t\ge 0}$ of mappings forms a  $C^0$-semigroup of contradictions on $\overline{D(\partial I)}.$
\end{lem}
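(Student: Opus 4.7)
The plan is to extend the contraction semigroup $\{S_0(t)\}_{t\ge 0}$ from $D(\partial I)$ to $\overline{D(\partial I)}$ by density, using the uniform contraction estimate supplied by Theorem \ref{G-flow}. This is a standard ``bounded linear extension''-type argument, but for a nonlinear contraction.

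First, given $u_0\in \overline{D(\partial I)}$, pick any sequence $\{u_k\}\subset D(\partial I)$ with $u_k\to u_0$ in $H$. By the contraction property of $S_0(t)$ obtained in Theorem \ref{G-flow}, one has
\[
\|S_0(t)u_k-S_0(t)u_j\|\le \|u_k-u_j\|\quad\forall\,t\ge 0,
\]
so $\{S_0(\cdot)u_k\}$ is Cauchy in $C([0,T];H)$ for every $T>0$. Hence there exists a continuous map $t\mapsto S(t)u_0$ from $[0,\infty)$ to $H$ such that $S_0(t)u_k\to S(t)u_0$ uniformly in $t$ on each $[0,T]$. Since each $S_0(t)u_k\in D(\partial I)$, the limit $S(t)u_0$ lies in $\overline{D(\partial I)}$. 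If $\{\tilde u_k\}\subset D(\partial I)$ is another sequence converging to $u_0$, then
\[
\|S_0(t)u_k-S_0(t)\tilde u_k\|\le \|u_k-\tilde u_k\|\to 0,
\]
so the limit is independent of the approximating sequence; in particular, the stated convergence $S_0(t)u_k\to S(t)u_0$ holds for every such sequence.

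Next I verify the semigroup axioms (\ref{semi-G}) on $\overline{D(\partial I)}$. From $S_0(0)u_k=u_k\to u_0$ one reads off $S(0)u_0=u_0$, and the continuity in $t$ was built into the construction. For the composition rule, use $S_0(t+s)u_k=S_0(t)S_0(s)u_k$; the left side converges to $S(t+s)u_0$. On the right, $S_0(s)u_k\in D(\partial I)$ (since $S_0(s)$ maps $D(\partial I)$ into itself by Theorem \ref{G-flow}) and $S_0(s)u_k\to S(s)u_0\in \overline{D(\partial I)}$; thus $\{S_0(s)u_k\}$ is an admissible approximating sequence for $S(s)u_0$, and the defining convergence of $S(t)$ gives $S_0(t)S_0(s)u_k\to S(t)S(s)u_0$. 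Equating the two limits yields $S(t+s)u_0=S(t)S(s)u_0$. Finally, for $u_0,v_0\in\overline{D(\partial I)}$ with approximants $u_k,v_k\in D(\partial I)$, passing to the limit in $\|S_0(t)u_k-S_0(t)v_k\|\le\|u_k-v_k\|$ gives the contraction estimate $\|S(t)u_0-S(t)v_0\|\le\|u_0-v_0\|$ on $\overline{D(\partial I)}$.

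There is no serious obstacle here; the proof is pure density bookkeeping once Theorem \ref{G-flow} is in hand. The only point requiring a brief verification is the one I flagged above: namely, that the intermediate sequence $\{S_0(s)u_k\}$ used in proving the semigroup property is itself a legitimate approximating sequence in $D(\partial I)$ for $S(s)u_0$, which rests on the invariance $S_0(s)(D(\partial I))\subseteq D(\partial I)$ already established in Theorem \ref{G-flow}.
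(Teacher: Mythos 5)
Your proposal is correct and follows essentially the same density argument as the paper: use the contraction estimate to show $\{S_0(t)u_k\}$ is Cauchy, obtain a well-defined limit independent of the approximating sequence, and pass all semigroup and contraction properties to the closure. The only cosmetic difference is that you establish continuity in $t$ by observing the convergence is uniform on compact intervals (i.e., Cauchy in $C([0,T];H)$), whereas the paper uses a three-term triangle inequality; both are standard and equivalent, and your identification that $\{S_0(s)u_k\}$ is itself an admissible approximating sequence for $S(s)u_0$ is exactly the point the paper's composition step relies on.
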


\begin{proof} Let $\{u_k\}$ be any sequence in $D(\partial I)$ with $u_k\to u_0.$ Note that
\begin{equation}\label{c-1}
\|S_0(t)u_k-S_0(t)u_\ell\|\le \|u_k-u_\ell\| \quad \forall\; k,\ell=1,2,\ldots.
\end{equation}
Thus, $\{S_0(t)u_k\}$ is a Cauchy sequence in $D(\partial I)\subseteq H$; hence, the limit
\[
 S(t)u_0:=\lim_{k\to\infty}S_0(t)u_k \in \overline{D(\partial I)}
\]
exists and  is independent of the choice of sequence $\{u_k\}.$  So $\{S(t)\}_{t\ge 0}$ is well-defined on $\overline{D(\partial I)}.$

We  observe that
\[
 S(0)u_0 =\lim_{k\to \infty} S_0(0)u_k =\lim_{k\to \infty} u_k=u_0.
\]
This shows that $(\ref{semi-G})_1$ holds.

Let $t,s\ge 0.$ Then we have
\[
 S(t+s)u_0=\lim_{k\to \infty} S_0(t+s)u_k =\lim_{k\to \infty} S_0(t)S_0(s)u_k \]
 \[
 = S(t) \lim_{k\to \infty}  S_0(s)u_k = S(t) S(s)u_0,
\]
which shows that $(\ref{semi-G})_2$ holds.

Next, taking $\ell\to\infty$ in (\ref{c-1}), we have
\[
\| S_0 (t)u_k-S(t)u_0\|\le \|u_k-u_0\| \quad\forall\, t\ge 0,\;\forall \, k=1,2,\ldots,
\]
from which it follows that
\[
\begin{split}
\|S(t)u_0-S(s)u_0\| \le &\, \|S(t)u_0-S_0(t)u_k\|+\|S_0(t)u_k-S_0(s)u_k\|\\
& \,+\|S_0(s)u_k-S(s)u_0\| \\
\le & \, \|S_0(t)u_k-S_0(s)u_k\|+ 2\|u_k-u_0\|.
\end{split}
\]
Since $\|S_0(t)u_k-S_0(s)u_k\|\to 0$ as $t\to s$ for each $u_k\in D(\partial I),$ it follows that
\[
\lim_{t\to s} \|S(t)u_0-S(s)u_0\|=0,
\]
which proves $(\ref{semi-G})_3.$

Finally, given $v_0\in  \overline{D(\partial I)},$  let $\{v_k\}$ be a sequence in $D(\partial I)$ such that $v_k\to v_0.$ Since $\{S_0(t)\}_{t\ge0}$ is of contractions on $D(\partial I)$, we have
\[
\|S_0(t)u_k-S_0(t)v_k\|\le \|u_k-v_k\|,
\]
and letting $k\to \infty$, we have
\[
\|S(t)u_0-S(t)v_0\|\le \|u_0-v_0\|\quad \forall\, t\ge 0,\;\forall\, u_0,v_0\in  \overline{D(\partial I)},
\]
which confirms the contraction property of $\{S(t)\}_{t\ge0}.$
\end{proof}

The following  theorem  summarizes  the important properties of the semigroup $\{S(t)\}_{t\ge 0}.$

\begin {thm}\label{main-semi-g} Let $u_0\in \overline{D(\partial I)}$ and $u(t)=S(t)u_0.$  Then
 \begin{itemize}
\item[(i)]
$ \;\;  t^{1/2}u'(t)\in L^2(0,T;H) \;\;   \forall\, T>0;$
\item[(ii)]
$ \;\;   I(u)\in L^1(0,T) \;\;  \forall\, T>0;$
\item[(iii)]  $\;\; u(t)\in D(\partial I)\;\mbox{and}\;  -u'(t)  \in \partial I(u(t)), \;\; \mbox{a.e.\;$t>0$};$
 \item[(iv)] $ \;\; -u'(t)=A^0(u(t)),\;\; \mbox{a.e.\;$t>0$};$
\item[(v)] $\;\; u(t)\in D(\partial I) \;\; \forall\,t>0,$ and $I(u(t))$ is nonincreasing on $(0,\infty);$
\item[(vi)] $\;\; u'\in L^2(0,T;H)$ and $I(u)\in W^{1,1}(0,T)$ $\,\forall\,T>0$ if $u_0\in D(I);$
\item[(vii)] $\;\; u(t)=S(t)u_0$ is the unique solution in the class
\begin{equation}\label{CP-b}
\{v\in C^0([0,\infty);H):v'\in L^\infty(\delta,\infty;H)\;\;\forall\,\delta>0\}
\end{equation}
\;to the Cauchy problem of gradient flow:
\begin{equation}\label{CP-a}
 \begin{cases}  u'(t)\in -\partial I (u(t)), \quad \mbox{a.e.\;$t>0$},\\
 u(0)=u_0.
 \end{cases}
\end{equation}
\;\;Moreover, $
\|u'\|_{L^\infty(\delta,\infty;H)}\le\|A^0(u(\delta))\|$ for all $\delta>0.$
 \end{itemize}
\end{thm}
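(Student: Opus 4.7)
The plan is to prove the theorem in three stages. First I will establish all claims for regular initial data $u_0\in D(\partial I)$ using Theorem~\ref{G-flow} and the chain rule Lemma~\ref{chain-rule}; then extend to $u_0\in\overline{D(\partial I)}$ via the density-based semigroup construction from Lemma~\ref{G-flow-2}; and finally use the semigroup identity $u(t)=S(t-\delta)u(\delta)$ with $u(\delta)\in D(\partial I)$ for $\delta\in(0,t)$ to transfer the fine properties of the regular case to the solution on $(\delta,\infty)$.

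For $u_0\in D(\partial I)$, Theorem~\ref{G-flow} already furnishes $u(t)=S(t)u_0$ with $u\in C([0,\infty);H)$, $u'\in L^\infty(0,\infty;H)$, $u(t)\in D(\partial I)$ for every $t\ge 0$, and $-u'(t)\in\partial I(u(t))$ a.e. Applying Lemma~\ref{chain-rule} with the $L^2$-selection $g=-u'$ gives
\[
\frac{d}{dt}I(u(t))=-\|u'(t)\|^2 \quad\text{for a.e. } t>0,
\]
from which absolute continuity and monotonicity of $I(u(\cdot))$ -- and hence items (ii), (vi) and the monotonicity claim in (v) -- follow immediately, while the $L^\infty$ bound on $u'$ gives (i). Items (iii) and the inclusion $u(t)\in D(\partial I)$ in (v) come directly from Theorem~\ref{G-flow}. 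For (iv), I reapply Lemma~\ref{chain-rule} with the selection $g=A^0(u(\cdot))$ (which lies in $L^2$ since $\|A^0(u(t))\|\le\|u'(t)\|\in L^2$) to obtain $\frac{d}{dt}I(u)=(A^0(u),u')$; equating this with $-\|u'\|^2$ forces equality in the Cauchy--Schwarz chain $-\|u'\|^2=(A^0(u),u')\ge -\|A^0(u)\|\|u'\|$, yielding $u'=-A^0(u)$ a.e.

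For general $u_0\in\overline{D(\partial I)}$ I pick $u_0^k\in D(\partial I)$ with $u_0^k\to u_0$ and set $u^k(t)=S(t)u_0^k$; by Lemma~\ref{G-flow-2}, $u^k\to u:=S(\cdot)u_0$ uniformly on $[0,T]$. The central observation is that $t\mapsto\|(u^k)'(t)\|$ is essentially nonincreasing: applying the contraction of $S_0(h)$ to the identity $u^k(s+a+h)-u^k(s+h)=S_0(h)\bigl(u^k(s+a)-u^k(s)\bigr)$ shows that $s\mapsto\|u^k(s+a)-u^k(s)\|$ is nonincreasing in $s$, and dividing by $a$ and letting $a\to 0^+$ at Lebesgue points delivers the claim. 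Combined with the energy identity from Step~1 applied to $u^k$, this yields
\[
t\|(u^k)'(t)\|^2\le\int_0^t\|(u^k)'(s)\|^2\,ds=I(u_0^k)-I(u^k(t)).
\]
Choosing $u_0^k=J_{1/k}(u_0)$ gives $I(u_0^k)\le I_{1/k}(u_0)\le I(u_0)$, uniformly bounded when $u_0\in D(I)$ (which then also supplies the full $L^2$ bound needed for (vi) after passing to the weak limit); for $u_0\in\overline{D(I)}\setminus D(I)$, the same uniform bound on $\sqrt{t}\,(u^k)'$ is obtained by replacing the naive energy identity with a comparison against a fixed $\phi\in D(\partial I)$ and $\xi\in\partial I(\phi)$ via the monotonicity inequality of Lemma~\ref{0-lem}(iii). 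From the uniform bound on $\sqrt{t}\,(u^k)'$ in $L^2_{\mathrm{loc}}((0,\infty);H)$ I extract a weak limit, invoke the weak closedness of the graph of $\partial I$ (Lemmas~\ref{0-lem}(ii) and \ref{maximal}) to deduce $-u'(t)\in\partial I(u(t))$ a.e.\ together with $u(t)\in D(\partial I)$ for every $t>0$ (the for-every upgrade via a continuity argument analogous to step~5 of the proof of Theorem~\ref{G-flow}), and then apply Step~1 on $[\delta,\infty)$ for every $\delta>0$ to obtain (iv) and the bound in (vii). Item (ii) follows by integrating the identity $\int_0^T I(u^k(t))\,dt=\int_0^T t\|(u^k)'(t)\|^2\,dt+TI(u^k(T))$ and invoking Fatou's lemma together with the lower bound (\ref{coer-I}). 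Uniqueness in the class (\ref{CP-b}) is the standard monotonicity computation from step~6 of the proof of Theorem~\ref{G-flow}.

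The main obstacle will be securing the uniform bound on $\sqrt{t}\,(u^k)'$ in $L^2_{\mathrm{loc}}$ when $u_0\in\overline{D(I)}\setminus D(I)$: in this regime $I(u_0^k)$ is not controllable along any approximating sequence $u_0^k\to u_0$, and the naive energy identity fails. The Brezis-type comparison with a fixed element of $D(\partial I)$ sketched above circumvents this, and is the most delicate technical ingredient of the entire argument.
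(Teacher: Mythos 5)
Your overall plan---treat $u_0\in D(\partial I)$ first via Theorem~\ref{G-flow} and Lemma~\ref{chain-rule}, then approximate and bootstrap from $u(\delta)\in D(\partial I)$---follows the same skeleton as the paper's proof, and your double chain-rule derivation of (iv) (using first $g=-u'$ and then $g=A^0(u(\cdot))$, and forcing equality in Cauchy--Schwarz) is a legitimate alternative to the paper's direct comparison argument, modulo a remark that $A^0=\lim_{\lambda\to 0^+}A_\lambda$ is Borel, hence $A^0(u(\cdot))$ is measurable. But the step you yourself flag as the crux of the whole argument contains a genuine gap. You propose to obtain the uniform bound on $\int_0^T t\|u_k'(t)\|^2\,dt$ for $u_0\notin D(I)$ by ``a comparison against a fixed $\phi\in D(\partial I)$ and $\xi\in\partial I(\phi)$ via the monotonicity inequality of Lemma~\ref{0-lem}(iii).'' The monotonicity inequality $(-u_k'(t)-\xi,\,u_k(t)-\phi)\ge 0$ only yields $\frac{d}{dt}\|u_k(t)-\phi\|\le\|\xi\|$, i.e.\ the linear growth of $\|u_k(t)-\phi\|$ that you already have from the contraction property; it gives no control on $\|u_k'(t)\|$ because the functional values $I(u_k(t))$ never appear. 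What you actually need is the full \emph{subdifferential inequality} $I(u_k(t))\le I(\phi)+(u_k'(t),\phi-u_k(t))$ with a fixed $\phi\in D(I)$, integrated and combined with the weighted energy identity $\int_0^T t\|u_k'\|^2\,dt + T\,I(u_k(T))=\int_0^T I(u_k(t))\,dt$ and the lower bound (\ref{coer-I}). The paper's Step~2 uses an even cleaner device: it takes the \emph{moving} reference point $x_0=u_k(T)$ in (\ref{i-23}), so that the $T\,I(u_k(T))$ terms cancel exactly and one lands on $\int_0^T t\|u_k'(t)\|^2\,dt\le \tfrac12\|u_k(0)-u_k(T)\|^2$, with no appeal either to the pointwise monotonicity of $\|u_k'(t)\|$ or to coercivity. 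Your sketch of (iii) via ``weak closedness of the graph'' is also too compressed: because $u_k'\rightharpoonup u'$ only in $L^2_{\mathrm{loc}}((0,\infty);H)$, not pointwise, one must first integrate the monotonicity inequality against arbitrary endpoints, pass to the limit, form difference quotients, and only then invoke Lemma~\ref{maximal} at a.e.\ $t$ (the paper's Step~4); Lemma~\ref{0-lem}(ii) (weak closedness of $\partial I(u)$ at a \emph{fixed} $u$) by itself is not enough. Similarly, your remark that (ii) follows ``by Fatou together with the lower bound'' from the identity (\ref{i2}) needs the upper bound supplied by integrating (\ref{i-23}) against a fixed $x_0\in D(I)$, since the term $T\,I(u_k(T))$ in (\ref{i2}) is not under control a priori.
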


\begin{proof}
1.  Let $u^0_k\in D(\partial I)$ be such that $u^0_k\to u_0$ as $k\to\infty.$ Set $u_k(t)=S_0(t)u^0_k.$  Then $u_k'\in L^\infty(0,\infty;H)$, and
\[
-u_k'(t)\in \partial I(u_k(t)), \quad \mbox{a.e.$\,t>0$}.
\]
By Lemma \ref{chain-rule},
\[
t\|u_k'(t)\|^2 +t (I(u_k(t)))'=0, \quad \mbox{a.e.$\,t>0$},
\]
and thus
\begin{equation}\label{i2}
\int_0^Tt\|u_k'(t)\|^2 \,dt + I(u_k(T)) T =\int_0^T I(u_k(t))\,dt\quad\forall\,T>0.
\end{equation}
Moreover, note that for a.e. $t>0$,
\begin{equation}\label{i-23}
\begin{split}
I(u_k(t)) & \le I(x_0)+(u_k'(t), x_0-u_k(t))\\
& =I(x_0)-\frac12 \frac{d}{dt} \|u_k (t)-x_0\|^2\quad\forall x_0\in D(I).
\end{split}
\end{equation}

2. Fix any $T>0$. Taking $x_0=u_k(T)$ in (\ref{i-23}) and integrating, we have
\[
\int_0^TI(u_k(t))\,dt \le I(u_k(T)) T +\frac12  \|u_k (0)-u_k(T)\|^2,
\]
which, combined with (\ref{i2}), gives
\begin{equation*}
\int_0^Tt\|u_k'(t)\|^2 \,dt \le  \frac12  \|u_k (0)-u_k(T)\|^2.
\end{equation*}
This implies that after passing to a subsequence if necessary, for some $v\in L^2(0, T;H)$,
\[
t^{1/2}u'_k(t) \rightharpoonup v\quad\mbox{in $L^2(0, T;H)$.}
\]
Let $0<\delta<T.$ Then  $\{u_k'\}$ is bounded in $L^2(\delta, T;H).$
Since $u_k\to u$ in $C([0,T];H)$ as $k\to \infty,$ it follows  that after passing again to a subsequence if necessary, $u'_k \rightharpoonup u'$ in $L^2(\delta, T;H)$; thus
\[
t^{1/2}u'_k(t) \rightharpoonup t^{1/2}u'(t)\quad\mbox{in $L^2(\delta, T;H)$}.
\]
Hence $t^{1/2}u'(t)=v(t)$ for a.e. $t\in(0,T)$, which  confirms (i).

3. Let $x_0\in D(I)\ne\emptyset$. Integrating (\ref{i-23}), we have
\[
\int_0^TI(u_k(t))\,dt \le I(x_0) T +\frac12  \|u_k (0)-x_0\|^2-\frac12  \|u_k (T)-x_0\|^2.
\]
Letting $k\to \infty,$ by the lower semicontinuity  of $I$ and Fatou's lemma, in view of  (\ref{coer-I}),   we have
\begin{equation}\label{i3}
\int_0^TI(u(t))\,dt \le I(x_0) T +\frac12  \|u_0-x_0\|^2-\frac12  \|u (T)-x_0\|^2<\infty.
\end{equation}
On the other hand, by (\ref{coer-I}),
\[
\int_0^T I(u(t))\,dt\ge (-C-C\|u\|_{C([0,T];H)})T> -\infty,
\]
which confirms (ii).

4. Let $x\in D(\partial I)$ and $y\in\partial I(x).$ Then, by monotonicity,
\[
 \frac12\frac{d}{dt} \|u_k(t)-x\|^2 =(u_k'(t),u_k(t)-x) \le  (-y, u_k(t)-x), \quad \mbox{a.e.\;$t>0$}.
\]
Let $t>s\ge0.$ Integrating, we have
\[
 \frac12 \|u_k(t)-x\|^2- \frac12 \|u_k(s)-x\|^2   \le  \int_s^t (-y, u_k(\tau)-x)\,d\tau.
 \]
Letting  $k\to\infty$, we have
\[
 \frac12 \|u(t)-x\|^2- \frac12 \|u(s)-x\|^2   \le  \int_s^t (-y, u(\tau)-x)\,d\tau,
 \]
and,  rearranging and dividing by $t-s>0,$  we have
 \[
 \left ( \frac{u(t)-u(s)}{t-s}, \frac{u(t)+u(s)}{2}-x\right ) \le \frac{1}{t-s} \int_s^t (-y, u(\tau)-x)\,d\tau.
 \]
 Letting $s\to t^-,$ we obtain that for a.e.\,$t>0$,
 \[
  (u'(t), u(t)-x) \le (-y, u(t)-x) \quad \forall \, x\in D(\partial I),\,\forall\, y\in\partial I(x),
  \]
 which, by Lemma \ref{maximal},  implies that
  \[
  u(t)\in D(\partial I)\;\;\mbox{and}\;\; -u'(t)\in \partial I(u(t)),\quad \mbox{a.e.\;$t>0$},
  \]
 confirming (iii).

5. Let  $\delta>0$ be such that $u(\delta)\in D(\partial I).$ Consider the two functions
 \[
 \tilde u(t)=S_0(t)u(\delta)\;\;\mbox{and}\;\; \hat u(t)=u(t+\delta) \quad (t\ge 0).
 \]
Note $\hat u\in C^0([0,\infty);H)$ is such that $\hat u'\in L^2([0,T];H)$ for all $T>0.$
Since both $\tilde u$ and $\hat u$ are a  solution to the Cauchy problem (\ref{CP}) with initial datum $u(\delta)\in D(\partial I),$
we may repeat the proof of uniqueness in Theorem \ref{G-flow} to conclude that $\hat{u}(t)=\tilde{u}(t)$ for all $t\ge0$; that is,
  \[
  u(t+\delta)=S_0(t)u(\delta)\quad \forall\, t\ge 0.
  \]
Thus, by Theorem \ref{G-flow}, we have
\begin{itemize}
\item [(a)] $u(t)\in D(\partial I)$ for all $t\ge \delta$;
\item [(b)] $u'\in L^\infty(\delta,\infty;H)$ and $\|u'\|_{L^\infty(\delta,\infty;H)}\le\|A^0(u(\delta))\|$.
\end{itemize}
Also, by Lemma \ref{chain-rule},
  \begin{equation}\label{i3-1}
  \frac{d}{dt} I(u(t))=-\|u'(t)\|^2\le 0, \quad \mbox{a.e. $t\in (\delta, \infty)$.}
  \end{equation}
Since (a), (b) and (\ref{i3-1}) hold for a.e. $\delta>0,$ (v) is confirmed, and (vii) follows from (iii)  and the proof of uniqueness as in that of uniqueness in Theorem \ref{G-flow}.

6. We now prove (iv). Fix any $t>0$. Since $u(t)\in D(\partial I)$, we have $A^0(u(t))\in \partial I(u(t)).$ Note also from (iii) that $-u'(t+s)\in\partial I(u(t+s))$ for a.e. $s>-t.$ By the monotonicity of $\partial I,$ we now have
\[
(A^0(u(t))+u'(t+s),u(t)-u(t+s))\ge 0,\quad\mbox{a.e. $s>-t$.}
\]
Letting $f(s)=\|u(t+s)-u(t)\|^2$ $(s>-t),$ we see that for a.e. $s>-t,$
\[
\begin{split}
\frac{1}{2}f'(s) & =\frac{1}{2}\frac{d}{ds}\|u(t+s)-u(t)\|^2=(u'(t+s) , u(t+s)-u(t))\\
& \le (A^0(u(t)),u(t)-u(t+s)) \le \|A^0(u(t))\|\sqrt{f(s)}.
\end{split}
\]

Now, let $t>0$ be such that $-u'(t)\in\partial I(u(t)).$ If there is a sequence $s_k\downarrow 0$ such that $f(s_k)=0$ for all $k=1,2,\ldots$, then $0=-u'(t) \in\partial I(u(t))$, which gives $\|A^0(u(t))\|=0$ and hence $A^0(u(t))=0=-u'(t)$.
Next, we assume there exists $s_0>0$ such that $f(s)>0$ for all $0<s<s_0.$ Then from the above calculation, we obtain
\[
\frac{d}{ds}\sqrt{f(s)}\le \|A^0(u(t))\|,\quad\mbox{a.e. $s\in(0,s_0)$;}
\]
thus, integrating,
\[
\sqrt{f(s)}\le s\|A^0(u(t))\|\quad\forall\, 0\le s\le s_0.
\]
Dividing by $s\in(0,s_0]$ and sending $s\to 0^+,$ we have $\|u'(t)\|\le \|A^0(u(t))\|;$ thus  $-u'(t)=A^0(u(t))$ by the uniqueness of $A^0(u(t))$ in $\partial I(u(t))$.
Therefore, (iv) follows from (iii).

7. Assume $u_0\in D(I)$ and let $u_k^0=J_{1/k}(u_0)\in D(\partial I)$ $(k\in\N)$. As in Step 1, let $u_k(t)=S_0(t) u_k^0.$ Since
 $u_k'\in L^\infty(0,\infty;H)$ and $\|u_k'(t)\|^2 + (I(u_k(t)))'=0 $ for a.e.\,$ t>0,$  it follows that $I(u_k(t))$ is Lipschitz and nonincreasing on $[0,\infty)$ and, by (\ref{coer-I}),
 \begin{equation}\label{pr-7}
  \int_\delta^T\|u_k'(\tau)\|^2 d\tau =I(u_k(\delta))-I(u_k(T))  \le I(u_k^0)+C+C\|u_k(T)\|
   \end{equation}
 for all $0<\delta<T.$ On the other hand, as $A_\l(u_0)\in \partial I(J_\l(u_0)),$ we have
 \[
 I(u_0)\ge I(J_{\frac1k}(u_0))+ (A_{\frac1k}(u_0), u_0-J_{\frac1k}(u_0))  =I(u_k^0) + \frac{1}{k}\|A_{\frac1k}(u_0)\|^2 \ge I(u_k^0).
 \]
From this and the lower semicontinuity  of $I$, we have $I(u_k^0)\to I(u_0)$ as $k\to\infty$.  So letting $k\to\infty$ in (\ref{pr-7}), we have
\[
 \int_\delta^T\|u'(\tau)\|^2 d\tau \le I(u_0) +C+C\|u(T)\|<\infty
 \]
for all $0<\delta<T.$ This proves that $u'\in L^2(0,T;H).$ Hence (vi) follows from  (iii) and
Lemma \ref{chain-rule}.
\end{proof}

%

\subsection{Asymptotic Convergence}
In this subsection, we assume that $I:H\to\bar\R$ is a proper, convex and lower semicontinuous functional as before and that $u_0\in\overline{D(\partial I)}$.
We follow  \cite{Bru} to study asymptotic behaviors of the solution $u(t)=S(t)u_0$  as $t\to \infty.$

Throughout this subsection, we assume  that $I$ has  a minimizer on $H.$
Let $(\partial I)^{-1}(0)$ denote the set of all $x\in H$ with $0\in\partial I(x);$ that is, $(\partial I)^{-1}(0)\ne \emptyset$ is the set of minimizers of $I$ on $H$.

Let $y\in (\partial I)^{-1}(0)$ be fixed. Then
 \[
h(t):=-\frac12 \frac{d}{dt} \|u(t)-y\|^2=-(u'(t),u(t)-y) \ge 0, \quad \mbox{a.e.\;$t>0$,}
\]
and hence $h\in L^1(0,\infty).$
By a {\em $(*)$-sequence} we mean a strictly increasing  sequence $\{t_k\}$ in $[1,\infty)$ with $t_k\to \infty$  such that
\[
-u'(t_k)\in\partial I(u(t_k))\quad \forall\, k\in\N\;\;\mbox{and}\;\;h(t_k)\to 0.
\]
 Note that a subsequence of a $(*)$-sequence is also a $(*)$-sequence.

\begin{lem}\label{bruck-lem} There exists an element $u^*\in (\partial I)^{-1}(0)$ such that $u(t_k)\wcon u^*$ for every $(*)$-sequence $\{t_k\}.$
\end{lem}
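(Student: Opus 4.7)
My plan is to prove the lemma in three stages: establishing a useful monotonicity, identifying weak subsequential limits as minimizers, and pinning down the unique weak limit via an Opial-type argument.

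First, for any fixed $y\in (\partial I)^{-1}(0)$, the monotonicity of $\partial I$ combined with $-u'(t)\in \partial I(u(t))$ and $0\in \partial I(y)$ (for a.e.\;$t>0$) gives $(u'(t), u(t)-y)\le 0$, confirming $h(t)\ge 0$. Consequently, $t\mapsto \|u(t)-y\|^2$ is nonincreasing, so the limit
\[
\phi(y):=\lim_{t\to\infty}\|u(t)-y\|
\]
exists in $[0,\infty)$ for every $y\in (\partial I)^{-1}(0)$. In particular, $\{u(t)\}_{t\ge 0}$ is bounded in $H$, so any $(*)$-sequence $\{t_k\}$ admits weakly convergent subsequences.

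Next I would identify any weak subsequential limit as a minimizer. Fix a $(*)$-sequence $\{t_k\}$ and suppose $u(t_{k_j})\wcon u^*$ in $H$. Using $-u'(t_k)\in \partial I(u(t_k))$ and testing the subdifferential inequality against any $y\in (\partial I)^{-1}(0)$ yields
\[
\min_H I=I(y)\ge I(u(t_k)) +(-u'(t_k),y-u(t_k))=I(u(t_k))-h(t_k).
\]
Since also $I(u(t_k))\ge \min_H I$ and $h(t_k)\to 0,$ we obtain $I(u(t_k))\to \min_H I.$ By the weak lower semicontinuity of the convex lower semicontinuous functional $I,$
\[
I(u^*)\le \liminf_{j\to\infty} I(u(t_{k_j}))=\min_H I,
\]
so $u^*\in (\partial I)^{-1}(0).$

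Finally, I would use an Opial-type argument to show the weak limit is unique and independent of the $(*)$-sequence. Suppose $\{t_k\}$ and $\{s_l\}$ are any two $(*)$-sequences with respective subsequential weak limits $u^*$ and $v^*,$ both lying in $(\partial I)^{-1}(0)$ by the previous step. Expanding
\[
\|u(t)-v^*\|^2-\|u(t)-u^*\|^2=-2(u(t),v^*-u^*)+\|v^*\|^2-\|u^*\|^2
\]
and letting $t\to\infty$ along the chosen subsequence of $\{t_k\}$ (using $u(t)\wcon u^*$ and the existence of $\phi(u^*),\phi(v^*)$) yields $\phi(v^*)^2-\phi(u^*)^2=\|u^*-v^*\|^2.$ Doing the same along the corresponding subsequence of $\{s_l\}$ gives $\phi(v^*)^2-\phi(u^*)^2=-\|u^*-v^*\|^2.$ Combining, $u^*=v^*.$ Hence every weak subsequential limit coming from any $(*)$-sequence equals a common element $u^*\in (\partial I)^{-1}(0),$ and boundedness in $H$ upgrades subsequential weak convergence to the full weak convergence $u(t_k)\wcon u^*$ for every $(*)$-sequence. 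The main technical obstacle is the Opial step, but the monotone existence of $\phi(y)$ at each minimizer $y$ makes it go through cleanly.
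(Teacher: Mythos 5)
Your proof is correct and follows essentially the same line of argument as the paper: first establish that $\|u(t)-y\|$ is nonincreasing for every minimizer $y$ (hence $\{u(t)\}$ is bounded and the limit $\phi(y)$ exists), then use weak lower semicontinuity to show every weak subsequential limit along a $(*)$-sequence is a minimizer, and finally apply the Opial-type identity (your expansion of $\|u(t)-v^*\|^2-\|u(t)-u^*\|^2$ is an algebraic rearrangement of the paper's expansion of $\|u(t)-u_2\|^2$) to force all subsequential limits to coincide. The only cosmetic difference is that the paper phrases the uniqueness step as ``if $u(t_k)\wcon u_1$ and $u(s_k)\wcon u_2$ for two $(*)$-sequences then $u_1=u_2$,'' whereas you pass to subsequences first and then upgrade to full weak convergence at the end; the substance is identical.
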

 \begin{proof}  Since $\|u(t)-y\|$ is nonincreasing in $t\ge 0$, it follows that $\{u(t)\}_{t\ge 0}$ is bounded, and thus every sequence $\{u(s_k)\}$ with $s_k\to \infty$ has a weakly convergent subsequence. Therefore, to show that $\{u(t_k)\}$ converges weakly  to a fixed element $u^*\in (\partial I)^{-1}(0)$ for every $(*)$-sequence $\{t_k\}$, it is enough to show that if
 \begin{equation}
u(t_k)\wcon u_1\;\;\mbox{and}\;\; u(s_k)\wcon u_2
\label{bruck-1}\end{equation}
for some $(*)$-sequences $\{t_k\}$ and $\{s_k\}$, then $u_1=u_2\in (\partial I)^{-1}(0)$.

Note that for all $k=1,2,\ldots,$
\[
I(y)\ge I(u(t_k)) -(u'(t_k), y-u(t_k)) =I(u(t_k)) -h(t_k).
\]
By Mazur's theorem, $I$ is  also sequentially weakly lower semicontinuous on $H;$ hence,  (\ref{bruck-1}) implies  $I(y)\ge I(u_1)$; so $I(u_1)=I(y)$ and $u_1\in (\partial I)^{-1}(0).$ Similarly, $u_2\in (\partial I)^{-1}(0).$ For $j=1,2$, since    $\|u(t)-u_j\|$ is nonincreasing in $t\ge 0$ and thus converges as $t\to\infty,$ we have
 \begin{equation}\label{bruck-2}
\begin{split} \lim_{k\to \infty} \|u(s_k)-u_1\|=\lim_{k\to \infty} \|u(t_k)-u_1\|,\\
 \lim_{k\to \infty}\|u(t_k)-u_2\|=\lim_{k\to \infty} \|u(s_k)-u_2\|.\end{split}
\end{equation}
Note that
 \begin{equation}
\label{bruck-3}
\|u(t)-u_2\|^2=\|u(t)-u_1\|^2 + 2 (u(t)-u_1,u_1-u_2) +\|u_2-u_1\|^2.
\end{equation}
Sending $k\to \infty$ in (\ref{bruck-3}) with $t=t_k$ and $s_k$,  (\ref{bruck-2}) implies
$
\|u_1-u_2\|^2=-\|u_1-u_2\|^2
$
and thus $u_1=u_2\in (\partial I)^{-1}(0)$  as required.
 \end{proof}

 \begin{thm} \label{asym-thm}
$u(t)\wcon u^*$ as $t\to \infty$ for some   minimizer $u^*$ of $I$ on $H$.
 \end{thm}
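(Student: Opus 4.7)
The plan is to combine Lemma \ref{bruck-lem} with an Opial-type argument: produce one $(*)$-sequence explicitly, use it to pin down the limiting energy, and then verify that any sequence $s_k\to\infty$ along which $\{u(s_k)\}$ converges weakly must share the limit $u^*$ delivered by Lemma \ref{bruck-lem}.

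First I would exhibit a $(*)$-sequence. Since $h\ge 0$ and $h\in L^1(0,\infty)$, setting $a_k:=\int_k^{k+1}h(s)\,ds\to 0$, Chebyshev's inequality on $[k,k+1]$ provides a subset of measure at least $1/2$ on which $h(t)\le 2a_k$; and by Theorem \ref{main-semi-g}(iii) the relation $-u'(t)\in\partial I(u(t))$ holds on a subset of $[k,k+1]$ of full measure. Choosing $t_k$ in the intersection (which has measure at least $1/2$) and passing to a strictly increasing subsequence, $\{t_k\}$ is a $(*)$-sequence, so Lemma \ref{bruck-lem} supplies $u^*\in(\partial I)^{-1}(0)$ with $u(t_k)\wcon u^*$. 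Since $u^*$ minimizes $I$ and $-u'(t_k)\in\partial I(u(t_k))$, the subgradient inequality yields $I(u^*)\ge I(u(t_k))-h(t_k)$; combined with the monotonicity of $t\mapsto I(u(t))$ from Theorem \ref{main-semi-g}(v) and the trivial bound $I(u(t))\ge \min I=I(u^*)$, this forces $I(u(t))\downarrow I(u^*)$ as $t\to\infty$.

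Now let $s_k\to\infty$ be arbitrary. As $\|u(t)-u^*\|$ is nonincreasing, the trajectory is bounded, so a subsequence satisfies $u(s_{k_j})\wcon v$ for some $v\in H$. Mazur's theorem makes $I$ sequentially weakly lower semicontinuous, so $I(v)\le\liminf_j I(u(s_{k_j}))=I(u^*)$ and $v\in(\partial I)^{-1}(0)$. Consequently both $\|u(t)-u^*\|$ and $\|u(t)-v\|$ are nonincreasing and converge as $t\to\infty$ to finite limits $\alpha$ and $\beta$. Expanding
\[
\|u(t)-v\|^2-\|u(t)-u^*\|^2=2(u(t)-u^*,\,u^*-v)+\|u^*-v\|^2
\]
and evaluating along $\{t_k\}$, where the cross term tends to $0$, gives $\beta-\alpha=\|u^*-v\|^2$; evaluating along $\{s_{k_j}\}$, where the cross term tends to $-\|u^*-v\|^2$, gives $\beta-\alpha=-\|u^*-v\|^2$. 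Hence $v=u^*$, so every weak cluster point of $\{u(t)\}$ is $u^*$ and therefore $u(t)\wcon u^*$.

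The main obstacle is the passage from the distinguished $(*)$-sequence to \emph{all} diverging times. It requires two ingredients: showing that $I(u(t))\to\min I$, so that every weak cluster point is a minimizer (hence an element along which the distance $\|u(\cdot)-v\|$ is monotone), and reprising the parallelogram-type computation from the proof of Lemma \ref{bruck-lem} to force any two cluster points in $(\partial I)^{-1}(0)$ to coincide.
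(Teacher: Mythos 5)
Your proposal is correct, and it takes a genuinely different route from the paper. The paper's proof introduces the notion of an almost-$(*)$-sequence, uses the Lipschitz bound $\|u(t)-u(s)\|\le\|A^0(u(1))\|\,|t-s|$ from Theorem \ref{main-semi-g}(vii) to transfer the weak limit from $(*)$-sequences to almost-$(*)$-sequences, and then a measure-theoretic argument with the sets $P_d$ to show every divergent sequence has an almost-$(*)$-subsequence. You instead first show that the energy converges to the minimum, $I(u(t))\downarrow I(u^*)$, by combining the subgradient inequality at $u(t_k)$ with the monotonicity in Theorem \ref{main-semi-g}(v); this together with sequential weak lower semicontinuity of $I$ shows that \emph{every} weak cluster point $v$ of the trajectory lies in $(\partial I)^{-1}(0)$, after which the Opial/parallelogram computation (the same algebra as in the proof of Lemma \ref{bruck-lem}) forces $v=u^*$. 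Your route avoids the almost-$(*)$-sequence machinery and the Lipschitz estimate on $[1,\infty)$ entirely, at the cost of invoking the monotonicity of $t\mapsto I(u(t))$ and of essentially repeating the Opial step that already appears inside Lemma \ref{bruck-lem}; both costs are modest, and the resulting argument is arguably more streamlined. Two small slips: in the display you wrote $\beta-\alpha$ where you mean $\beta^2-\alpha^2$ (since $\alpha,\beta$ are limits of norms, not squared norms), and the subgradient inequality is applied at the fixed reference minimizer $y$ giving $I(y)\ge I(u(t_k))-h(t_k)$, after which one uses $I(y)=I(u^*)=\min I$; neither affects the validity of the argument.
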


  \begin{proof} 1. Let $u^*\in (\partial I)^{-1}(0)$ be the element determined in Lemma \ref{bruck-lem}. We  show $u(t)\wcon u^*$  as $t\to \infty.$ To this end, we call a strictly increasing sequence $\{s_k\}$ in $[1,\infty)$ an {\em almost-$(*)$-sequence} if
  \[
  \mbox{there exists a $(*)$-sequence $\{t_k\}$ with $s_k-t_k\to 0.$}
  \]
By Theorem \ref{main-semi-g}(vii), we have
$
\|u(t)-u(s)\|\le \|A^0(u(1))\| |t-s|$ for all $ t,s\ge 1.$ Hence,
Lemma \ref{bruck-lem} implies  that $u(s_k)\wcon u^*$ for every almost-$(*)$-sequence $\{s_k\}.$

2. Now let $\{s_k\}$ be any strictly increasing sequence in $[1,\infty)$ with $s_k\to \infty$. We claim that $\{s_k\}$ has an almost-$(*)$-subsequence. Given $d>0$, put
\[
P_d=\{t\in [1,\infty): \mbox{$-u'(t)\in\partial I(u(t))$  and $h(t)<d$}\}.
\]
Since $h\in L^1(0,\infty)$, the set $[1,\infty)\setminus P_d$ has finite measure; hence only finite number of intervals $(s_k-d,s_k+d)$ can fail to intersect $P_d.$  That is, for each $d>0$, there exists an integer $m=m(d)\ge1$ such that, for each $k\ge m$, $|t-s_k|<d$ for some $t=t(k)\in P_d$.  Then the existence of an almost-$(*)$-subsequence of $\{s_k\}$ is obvious.

3. Finally, suppose $\{u(t)\}$ does not converge weakly to $u^*$ as $t\to \infty.$ Since $\{u(t)\}_{t\ge 0}$ is bounded, there must be a sequence $s_k\uparrow \infty$ such that $u(s_k)\wcon \tilde u$ for some $\tilde u\ne u^*.$ However, $\{s_k\}$ has an almost-$(*)$-subsequence $\{s_{k_j}\}$. But then Step 1 shows that $u(s_{k_j})\wcon u^*$, a contradiction as $u^*\ne \tilde u.$
 \end{proof}

 \section{Variational and Semigroup Solutions Coincide: \\ Proof of Theorems \ref{main-1} and \ref{main-2}\label{s-3}}

In what follows,  let $\F$ be  given as in (\ref{abs-F}), where we assume $1<p<\infty$.  Denote $H=L^2(\Omega;\R^N)$ and $(u,v)=(u,v)_{L^2(\Omega)}$ and $\|u\|=\|u\|_{L^2(\Omega)}$ for $u,v\in H.$  We define functional  $I\colon  H\to\bar \R$ by
 \begin{equation}\label{fun-I}
 I(u)=\begin{cases} \F(u)& \mbox{if $u\in H\cap W_0^{1,p}(\Omega;\R^N)$,}\\[1ex]
 \infty & \mbox{if $u\in H\setminus W_0^{1,p}(\Omega;\R^N).$}
 \end{cases}
\end{equation}

It is easily seen that $D(I)=D(\F)$,
\begin{equation}\label{fun-I-1}
\partial I(u)=\mathcal F(u)\;\;\forall\,u\in D(\F)\;\;\mbox{and}\;\;D(\partial I)=D(\mathcal F),
\end{equation}
and thus the Cauchy problems (\ref{CP-0}) and (\ref{CP-a}) are equivalent.

\begin{lem}\label{lem-1} Assume that $\F$ is coercive and lower semicontinuous on $W_0^{1,p}(\Omega;\R^N)$  and convex on $L^2(\Omega;\R^N)\cap W_0^{1,p}(\Omega;\R^N).$
 Then  $I$ is proper, convex and lower semicontinuous on $L^2(\Omega;\R^N);$ in particular, $\overline{D(\partial I)}= \overline{D(\F)}.$
 Moreover, if in addition $\F$ satisfies (\ref{ass-0}), then $\overline{D(\partial I)}=\overline{D(\F)}=L^2(\Omega;\R^N).$
\end{lem}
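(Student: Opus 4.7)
The plan is to verify that $I$ is proper, convex, and lower semicontinuous on $H = L^2(\Omega;\R^N)$, and then deduce the two density statements from Lemma~\ref{pro-dense}. Properness is immediate from $0 \in D(I)$, since $I(0) = \F(0) < \infty$ by (\ref{abs-F}). For convexity, if $u, v \in H$ and $\lambda \in [0,1]$, the inequality is trivial unless both $I(u), I(v) < \infty$; in that case $u, v \in L^2(\Omega;\R^N) \cap W_0^{1,p}(\Omega;\R^N)$, so $\lambda u + (1-\lambda)v$ lies in the same intersection (which is a linear subspace), and the assumed convexity of $\F$ on $L^2 \cap W_0^{1,p}$ gives the desired bound.

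The substantive step is lower semicontinuity on $L^2$. I would start with a sequence $u_k \to u$ in $L^2(\Omega;\R^N)$ and set $L := \liminf_k I(u_k)$; if $L = \infty$ there is nothing to prove, so assume $L < \infty$ and extract a subsequence (still denoted $u_k$) with $I(u_k) \to L$ and $u_k \in L^2 \cap W_0^{1,p}$. The coercivity of $\F$ on $W_0^{1,p}$ then bounds $\{u_k\}$ in $W_0^{1,p}$, and since $1 < p < \infty$ the space is reflexive, so a further subsequence satisfies $u_k \rightharpoonup v$ weakly in $W_0^{1,p}$ for some $v \in W_0^{1,p}$. Testing against $\phi \in C_0^\infty(\Omega;\R^N)$—which pairs with both the $L^2$-strong and $W_0^{1,p}$-weak convergences—forces $v = u$, so in particular $u \in L^2 \cap W_0^{1,p}$.

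The remaining point is to conclude $\F(u) \le L$, and this is where the key obstacle appears: convexity of $\F$ is available only on the intersection $L^2 \cap W_0^{1,p}$, not on $W_0^{1,p}$ itself, so one cannot directly invoke the standard fact ``convex + strongly l.s.c.\ $\Rightarrow$ weakly l.s.c.'' on $W_0^{1,p}$. My workaround is to use Mazur's lemma inside $W_0^{1,p}$: it produces finite convex combinations $\tilde{u}_k = \sum_{j \ge k} \lambda_j^{(k)} u_j$ with $\tilde{u}_k \to u$ strongly in $W_0^{1,p}$. Because each $u_j$ is already in $L^2 \cap W_0^{1,p}$ and the sums are finite, each $\tilde{u}_k$ lies in the same intersection, so the convexity hypothesis legitimately yields $\F(\tilde{u}_k) \le \sum_j \lambda_j^{(k)} \F(u_j) \le L + \epsilon$ once $k$ is large enough. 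Strong lower semicontinuity of $\F$ on $W_0^{1,p}$ then gives $\F(u) \le L + \epsilon$, and $\epsilon \to 0^+$ finishes the l.s.c.\ step.

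Once $I$ is established as proper, convex and lower semicontinuous on $H$, Lemma~\ref{pro-dense} applies and gives $\overline{D(\partial I)} = \overline{D(I)}$. Since $D(I) = D(\F)$ directly from the construction (\ref{fun-I}), this yields $\overline{D(\partial I)} = \overline{D(\F)}$. Finally, if $\F$ also satisfies (\ref{ass-0}), then $C_0^\infty(\Omega;\R^N) \subseteq D(\F) \subseteq L^2(\Omega;\R^N)$, and density of $C_0^\infty$ in $L^2$ gives $\overline{D(\F)} = L^2(\Omega;\R^N)$, completing the proof.
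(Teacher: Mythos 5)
Your proposal is correct and follows essentially the same route as the paper: properness and convexity are handled by the same elementary observations, and the lower semicontinuity step uses the identical chain of coercivity, weak $W_0^{1,p}$-compactness, identification of the weak limit via the strong $L^2$-convergence, Mazur's lemma to pass to strongly convergent convex combinations inside $L^2\cap W_0^{1,p}$, and finally the assumed strong lower semicontinuity of $\F$ on $W_0^{1,p}$. The density statements are also derived the same way, via Lemma~\ref{pro-dense} and density of $C_0^\infty$ in $L^2$.
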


\begin{proof} Clearly,  $I$ is proper as  $I(0)=\F(0)<\infty.$

To show $I$ is convex on $L^2(\Omega;\R^N),$ let $u,v\in L^2(\Omega;\R^N)$ and $\lambda\in (0,1).$ If at least one of $u$ and $v$ is not in $W_0^{1,p}(\Omega;\R^N),$ then the inequality
\begin{equation}\label{cx-a}
I(\lambda u+(1-\lambda)v)\le \lambda I(u)+(1-\lambda)I(v)
\end{equation}
holds automatically. If both $u,v\in W_0^{1,p}(\Omega;\R^N),$ then $\lambda u+(1-\lambda)v\in W_0^{1,p}(\Omega;\R^N)$; thus inequality (\ref{cx-a}) follows from the convexity of $\F$ on $L^2(\Omega;\R^N)\cap W_0^{1,p}(\Omega;\R^N).$ This proves the convexity of $I$ on $L^2(\Omega;\R^N).$

To prove the lower semicontinuity of $I$ on $L^2(\Omega;\R^N)$, it suffices to show that
\[
\mbox{$u_j\to u$ in $L^2(\Omega;\R^N)$,}  \;\;   \bar I:=\liminf_{j\to\infty} I(u_j) <\infty \;\;  \imply \;\;  I(u)\le \bar I.
\]
Choose a subsequence $\{u_{j_k}\}$ of $\{u_j\}$ such that $\lim\limits_{k\to\infty} I(u_{j_k})=\bar I$. Let $\epsilon>0.$ Then there exists an integer $K\ge1$ such that
\begin{equation}\label{cx-1}
I(u_{j_k})<\bar I +\epsilon\quad \forall\, k\ge K.
\end{equation}
This implies that $u_{j_k}\in W_0^{1,p}(\Omega;\R^N)$ for all $k\ge K.$ Hence $\F(u_{j_k})=I(u_{j_k})<\bar I +\epsilon$ $(k\ge K);$ thus, from the coercivity of $\F$ on $W_0^{1,p}(\Omega;\R^N)$,  it follows that  $\{u_{j_k}\}_{k=K}^\infty$ is bounded in $W_0^{1,p}(\Omega;\R^N)$ after passing to a subsequence if necessary.  Upon taking a further subsequence that we do not relabel, we have $u_{j_k}\wcon \tilde u$ in $W_0^{1,p}(\Omega;\R^N)$ for some $\tilde u\in W_0^{1,p}(\Omega;\R^N)$. However,  since $u_{j_k}\to u$ in $L^2(\Omega;\R^N)$, it follows that $u=\tilde u\in L^2(\Omega;\R^N)\cap W_0^{1,p}(\Omega;\R^N).$ Hence $u_{j_k}\wcon u$ in $W_0^{1,p}(\Omega;\R^N).$ Since $1<p<\infty,$ by Mazur's lemma, there exist a function $\Lambda:\N\to\N$ with $\Lambda(i)\ge i$ $(i\in\N)$ and a sequence of finite sets of real numbers $\{\lambda(i)_k : i\le  k\le \Lambda(i)\}$ $(i\in\N)$ with $\lambda(i)_k\ge0$ for $k=i,\ldots,\Lambda(i)$ and $\sum_{k=i}^{\Lambda(i)}\lambda(i)_k=1$ such that letting $v_i= \sum_{k=i}^{\Lambda(i)}\lambda(i)_k u_{j_k}$ $(i\in\N)$, we have $v_i\to u$ in $W_0^{1,p}(\Omega;\R^N).$
On the other hand, the convexity of $\F$ on $L^2(\Omega;\R^N)\cap W_0^{1,p}(\Omega;\R^N)$ and (\ref{cx-1}) imply that for all $i\ge K,$
\[
\F(v_i)=\F\bigg (\sum_{k=i}^{\Lambda(i)}\lambda(i)_k u_{j_k}\bigg )\le \sum_{k=i}^{\Lambda(i)}\lambda(i)_k \F(u_{j_k})<\bar I +\epsilon;
\]
thus, the lower semicontinuity of $\F$ on $W_0^{1,p}(\Omega;\R^N)$ implies that
\[
\F(u)\le \liminf_{i\to\infty} \F(v_i) \le \bar I +\epsilon.
\]
This proves that $I(u)=\F(u)\le \bar I$ as desired.

Finally, since $C^\infty_0(\Omega;\R^N)$ is dense in $L^2(\Omega;\R^N),$ it follows that  if  $C^\infty_0(\Omega;\R^N)\subset D(\F)$, then by Lemma \ref{pro-dense},
 \[
 \overline{D(\partial I)}=\overline{D(I)}=\overline{D(\F)}=L^2(\Omega;\R^N).
 \]
The proof is now complete.
\end{proof}

\subsection*{Proof of Theorem \ref{main-1}}  By Lemma \ref{lem-1}, the functional $I$ is proper, convex and lower semicontinuous on $H=L^2(\Omega;\R^N)$. Consequently, Theorem \ref{main-1} follows from (\ref{fun-I-1}) and Theorems \ref{G-flow} and \ref{main-semi-g}.

\begin{lem}\label{lem-11} Assume that $\F$ is coercive and lower semicontinuous on $W_0^{1,p}(\Omega;\R^N)$  and convex on $L^2(\Omega;\R^N)\cap W_0^{1,p}(\Omega;\R^N).$ Let $u_0\in \overline{D(I)}=\overline{D(\F)}$ and $u(t)=S(t)u_0.$ Then $u$ satisfies the variational inequality (\ref{var-sol}) for all $T>0.$
\end{lem}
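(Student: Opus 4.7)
The plan is to derive the variational inequality pointwise in $t$ from the subdifferential inclusion $u'(t)\in -\mathcal{F}(u(t))$ supplied by Theorem \ref{main-1}, integrate over $t\in(\delta,\tau)$, perform an integration by parts in the cross term involving $u'$, and finally let $\delta\to 0^+$.

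First I would fix an admissible test function $v$ and $\tau\in(0,T]$ as in Definition \ref{v-sol}. It suffices to treat the case $\int_0^\tau \F(v)\,dt<\infty$, since otherwise (\ref{var-sol}) is trivial. By Theorem \ref{main-1} we have $u(t)\in D(\mathcal{F})$ for all $t>0$ and $-u'(t)\in\mathcal{F}(u(t))$ for a.e. $t>0$, so by the definition of the $L^2$-subdifferential applied with $w=v(\cdot,t)\in D(\F)$ (with the inequality trivially holding when $v(\cdot,t)\notin D(\F)$, as $\F(v(\cdot,t))=+\infty$),
\[
\F(u(\cdot,t))\le \F(v(\cdot,t))+(u'(t),v(\cdot,t)-u(\cdot,t))\quad \text{for a.e. } t\in(0,\tau).
\]
Integrating over $(\delta,\tau)$ for fixed $\delta\in(0,\tau)$ is legitimate because Theorem \ref{main-1}(iii) gives $u'\in L^\infty(\delta,\tau;L^2(\Omega;\R^N))$ while $\F(u),\F(v)\in L^1(\delta,\tau)$ using Theorem \ref{main-1}(i) together with the lower bound $\F(w)\ge -C-C\|w\|$ furnished by Lemma \ref{pro-dense} applied to the functional $I$ in (\ref{fun-I}).

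Since $u'\in L^\infty(\delta,\tau;L^2)$ and $\partial_t v\in L^2(\Omega_T;\R^N)$, the scalar function $t\mapsto \tfrac12\|v(\cdot,t)-u(\cdot,t)\|^2$ is absolutely continuous on $[\delta,\tau]$ with derivative $(v'-u',v-u)$, so the standard chain rule yields
\[
\int_\delta^\tau(u',v-u)\,dt=\iint_{\Omega\times(\delta,\tau)}\partial_t v\cdot(v-u)\,dxdt-\tfrac12\|v(\cdot,\tau)-u(\cdot,\tau)\|^2+\tfrac12\|v(\cdot,\delta)-u(\cdot,\delta)\|^2.
\]
Combining this with the integrated pointwise inequality produces the analogue of (\ref{var-sol}) on the interval $(\delta,\tau)$.

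Finally, I would let $\delta\to 0^+$. Continuity $u\in C^0([0,\infty);L^2)$ with $u(0)=u_0$, together with $v\in C^0([0,T];L^2)$ (which follows from $v(\cdot,0)\in L^2$ and $\partial_tv\in L^2(\Omega_T)$), gives $\tfrac12\|v(\cdot,\delta)-u(\cdot,\delta)\|^2\to \tfrac12\|v(\cdot,0)-u_0\|^2$. The $\F$-integrals pass to $\int_0^\tau$ by Theorem \ref{main-1}(i) and the lower bound above (monotone/dominated convergence), and the double integral converges by dominated convergence in view of $|\partial_t v\cdot(v-u)|\le |\partial_tv|(|v|+|u|)\in L^1(\Omega_\tau)$. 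The resulting inequality is exactly (\ref{var-sol}). The main subtlety is the $\delta\to 0^+$ passage when $u_0\in\overline{D(\F)}\setminus D(\F)$, for then $u'$ need not lie in $L^2(0,\tau;L^2)$; this is precisely why one integrates first on $(\delta,\tau)$ and relies on the $L^1$-integrability of $\F(u)$ up to $t=0$ provided by Theorem \ref{main-1}(i).
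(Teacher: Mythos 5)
Your proof follows essentially the same strategy as the paper's: apply the subdifferential inclusion $-u'(t)\in\partial I(u(t))$ pointwise with test element $v(t)$, integrate over $[\delta,\tau]$, integrate by parts in the cross term via $\tfrac12\frac{d}{dt}\|v-u\|^2=(v'-u',v-u)$, and send $\delta\to 0^+$. Your added care in justifying the $\delta\to 0^+$ passage (via Theorem \ref{main-1}(i) and the lower bound from Lemma \ref{pro-dense}) simply makes explicit what the paper leaves implicit.
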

\begin{proof}
Let $T>0$ and
$
  v\in  L^p(0,T; W_0^{1,p}(\Omega;\R^N))$ with $\partial_t v\in L^2(\Omega_T;\R^N)$ and $v(0) \in L^2(\Omega;\R^N).$
 Then  $v\in C^0([0,T];H)$ and $v'\in L^2(0,T;H).$ By Theorem \ref{main-semi-g}, we have $ u\in W^{1,\infty}(\delta,T;H)$ for all $0<\delta<T$, $-u'(t)\in \partial I(u(t))$ for a.e. $t>0$,  and $u(t)\in D(\partial I)$ for all $t>0.$ Thus,
\begin{equation}\label{v-ineq}
\begin{split}
\F(u(t))  =I(u(t)) & \le I(v(t)) +(u'(t), v(t)-u(t))\\
& = \F(v(t)) +(u'(t), v(t)-u(t)), \quad \mbox{a.e. $t>0$}.
\end{split}
\end{equation}
Since $\F(u)=I(u)\in L^1(0,T)$, we can integrate (\ref{v-ineq}) over $[\delta,\tau]\subset(0,T]$ to obtain
\[
\begin{split} \int_\delta^\tau \F(u)\,dt &\le \int_\delta^\tau \F(v)\,dt +\int_\delta^\tau (u', v-u)\,dt
\\
&= \int_\delta^\tau \F(v)\,dt+ \int_\delta^\tau (v',v-u)\,dt -\int_\delta^\tau (v'-u',v-u)\,dt \\
&=  \int_\delta^\tau\F(v)\,dt + \int_\delta^\tau (v',v-u)\,dt -\int_\delta^\tau \frac12  \frac{d}{dt}\|v(t)-u(t)\|^2\,dt \\
&=  \int_\delta^\tau\F(v)\,dt + \int_\delta^\tau (v',v-u)\,dt -\frac12 \|v(\tau)-u(\tau)\|^2 +\frac12 \|v(\delta)-u(\delta)\|^2.
\end{split}
\]
Letting $\delta\to 0^+$,   (\ref{var-sol}) follows as  $ u,v\in C^0([0,T];H).$
\end{proof}

\begin{pro}\label{pf-main-2} Assume that $\F$ satisfies  condition (\ref{strong-coer}) and  is  lower semicontinuous on $W_0^{1,p}(\Omega;\R^N)$ and convex on $L^2(\Omega;\R^N)\cap W_0^{1,p}(\Omega;\R^N).$  Let  $u_0\in \overline{D(I)}=\overline{D(\F)}$ and $T>0.$  Then $u$ is a  variational solution of the gradient flow  associated to  the functional $\F$ with initial datum $u_0$ in the sense of Definition \ref{v-sol} if and only if $u(t)=S(t)u_0$ for all $t\in [0,T].$
 \end{pro}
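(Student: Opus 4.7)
The equivalence has two directions. The forward direction, that $u(t)=S(t)u_0$ is a variational solution, is essentially Lemma~\ref{lem-11}. The only additional regularity to check is that $u\in L^p(0,T;W^{1,p}_0(\Omega;\R^N))$, required by Definition~\ref{v-sol}. This follows from coercivity~(\ref{strong-coer}) combined with Theorem~\ref{main-1}(i): from $\nu\|Du(t)\|_{L^p}^p \le \F(u(t))+L$ and $\F(u)\in L^1(0,T)$, we immediately obtain $\|Du\|_{L^p}^p\in L^1(0,T)$. Continuity into $L^2$ comes directly from Theorem~\ref{main-1}.

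For the converse, suppose $u$ is a variational solution with $u(0)=u_0$, and let $\tilde u(t)=S(t)u_0$. The natural idea is to use $v=\tilde u$ as the test function in~(\ref{var-sol}) and combine with the subdifferential inequality $\F(u)\ge \F(\tilde u)+(-\partial_t\tilde u,u-\tilde u)$, which is available because $-\partial_t \tilde u(t)=A^0(\tilde u(t))\in \mathcal F(\tilde u(t))$ a.e.\ by Theorem~\ref{main-1}(iv). However, when $u_0\in\overline{D(\F)}\setminus D(\F)$, one cannot assert that $\partial_t\tilde u\in L^2(\Omega_T;\R^N)$; Theorem~\ref{main-semi-g}(i) only gives $t^{1/2}\tilde u'\in L^2(0,T;H)$, so $\tilde u$ itself need not be an admissible test function. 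This is the main technical obstacle.

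To circumvent this, I will use the semigroup property together with a small time shift. For $\epsilon>0$, set
\[
v_\epsilon(t):= S(t+\epsilon)u_0 = S(t)\bigl(S(\epsilon)u_0\bigr).
\]
Since $S(\epsilon)u_0\in D(\mathcal F)\subset D(\F)$ for every $\epsilon>0$ by Theorem~\ref{main-1}, the regularity statements in Theorem~\ref{main-1}(v)-(vi) and Theorem~\ref{main-2}(ii) applied to the initial datum $S(\epsilon)u_0$ guarantee that $v_\epsilon\in L^\infty(0,T;W^{1,p}_0(\Omega;\R^N))$ with $\partial_t v_\epsilon\in L^\infty(0,\infty;L^2(\Omega;\R^N))$, so $v_\epsilon$ is an admissible test function. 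Moreover $-\partial_t v_\epsilon(t)=A^0(v_\epsilon(t))\in\mathcal F(v_\epsilon(t))$ a.e.

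Inserting $v=v_\epsilon$ into the variational inequality for $u$ and adding the integrated subdifferential inequality
\[
\int_0^\tau \F(u)\,dt \ge \int_0^\tau \F(v_\epsilon)\,dt + \int_0^\tau (\partial_t v_\epsilon,v_\epsilon-u)\,dt,
\]
the terms involving $\F(v_\epsilon)$ and the duality pairing cancel and we obtain
\[
\tfrac12\|v_\epsilon(\tau)-u(\tau)\|^2 \le \tfrac12\|v_\epsilon(0)-u_0\|^2 = \tfrac12\|S(\epsilon)u_0-u_0\|^2.
\]
Letting $\epsilon\to 0^+$, the right-hand side vanishes by continuity of $t\mapsto S(t)u_0$ at $t=0$ (Lemma~\ref{G-flow-2}), while $v_\epsilon(\tau)=S(\tau+\epsilon)u_0\to S(\tau)u_0=\tilde u(\tau)$. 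Hence $u(\tau)=\tilde u(\tau)$ for every $\tau\in(0,T]$, and $u(0)=u_0=\tilde u(0)$ by continuity, completing the proof.
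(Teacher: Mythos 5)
Your proposal is correct, and the converse direction runs on the same engine as the paper's: test the variational inequality with a regularized semigroup trajectory along which $-\partial_t v\in\mathcal F(v)$, cancel the $\F$-terms against the integrated subdifferential inequality, and deduce contraction of the $L^2$-distance. The difference is in the choice of regularization: the paper picks an arbitrary sequence $u_k^0\in D(\mathcal F)$ with $u_k^0\to u_0$ in $L^2$ and tests with $u_k(t)=S_0(t)u_k^0$, whereas you test with the time-shifted trajectory $v_\epsilon(t)=S(t+\epsilon)u_0$. Your variant is a special case of the paper's, exploiting the semigroup property and the fact that $S(\epsilon)u_0\in D(\mathcal F)$ for all $\epsilon>0$; what it buys is a marginally cleaner limit, since $v_\epsilon(\tau)\to S(\tau)u_0$ follows at once from the continuity of the single trajectory $t\mapsto S(t)u_0$, so you do not need to reinvoke Lemma~\ref{G-flow-2} at the end. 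Two small presentational caveats: first, you cite Theorem~\ref{main-2}(ii) to get $v_\epsilon\in L^\infty(0,T;W^{1,p}_0)$, but that theorem is the one Proposition~\ref{pf-main-2} is helping establish; you should instead invoke Theorem~\ref{main-semi-g}(v)--(vi) together with (\ref{strong-coer}), exactly as the paper does, to avoid the forward reference. Second, you should record (as the paper does by first testing with $v=0$) that the variational inequality forces $\F(u)\in L^1(0,T)$, so that the cancellation in your chain of inequalities is legitimate and the subdifferential inequality $\F(u(t))\ge\F(v_\epsilon(t))+(\partial_t v_\epsilon(t),v_\epsilon(t)-u(t))$ is being applied at points $u(t)\in D(\F)$. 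Neither point affects the validity of the argument.
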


\begin{proof}
First, suppose that $u\in C^0([0,T];H)\cap L^p(0,T; W_0^{1,p}(\Omega;\R^N))$ is a  variational solution associated to  the functional $\F$ with initial datum $u_0$ in the sense of Definition \ref{v-sol}. By (\ref{strong-coer}) and (\ref{var-sol}) with the test function $v=0$, we see that
\[
\begin{split}
-\infty<\int_0^T (\nu\|Du(t)\|_{L^p(\Omega)}^p-L)\,dt & \le \int_0^T \F(u)\,dt \\
& \le \F(0)T +\frac{1}{2}\|u_0\|^2-\frac{1}{2}\|u(T)\|^2<\infty;
\end{split}
\]
that is, $\F(u)\in L^1(0,T)$ and thus $u(t)\in D(\F)$ for a.e.\,$t\in (0,T).$ Next, choose a sequence $\{u_k^0\}$ in $D(\partial I)$ so that $u_k^0\to u_0$ in $H$ as $k\to \infty.$ Let $u_k(t)=S_0(t)u_k^0$ for $0\le t\le T$. It follows  from Theorem  \ref{G-flow}, Theorem \ref{main-semi-g}(v)(vi),  and (\ref{strong-coer}) that $u_k \in  C^0([0,T];H)$, $u_k'\in L^\infty (0,T;H)$, and for all $0\le t\le T$,
 \[
 \nu  \|Du_k (t)\|^p_{L^p(\Omega)} \le \F(u_k(t)) +L =I(u_k(t)) +L\le I(u_k^0)+L <\infty.
 \]
In particular, $u_k \in  L^\infty(0,T; W^{1,p}_0(\Omega;\R^N)).$ Thus we can take $v=u_k$ as a test function in (\ref{var-sol}) to have
\[
 \begin{split} \int_0^\tau \F(u)\,dt \le &\int_0^\tau \F(u_k)\, dt +\int_0^\tau (u_k'(t), u_k(t)-u(t)) dt \\
 &+ \frac12 \|u_k^0-u_0\|^2 - \frac12 \|u_k(\tau)-u(\tau)\|^2 \quad\forall\,\tau\in(0,T].\end{split}
 \]
 Since $-u_k'(t)\in \partial I(u_k(t))$ and $u(t)\in D(\F)=D(I)$ for a.e.\,$t\in (0,T)$, we have
 \[
 \F(u_k(t))+ (u_k'(t), u_k(t)-u(t))\le \F(u(t)), \quad \mbox{a.e.\,$t\in (0,T).$}
 \]
 Thus the previous variational inequality yields that
 \[
  \|u_k(\tau)-u(\tau)\|\le \|u_k^0-u_0\|\to 0 \quad \mbox{as $k \to \infty$.}
  \]
 By Lemma \ref{G-flow-2}, we have
  $u_k(\tau)=S_0(\tau)u_k^0 \to S(\tau)u_0$ in $H.$ Hence $u(\tau)=S(\tau)u_0$ for all $\tau\in (0,T].$ Clearly they are also equal at $\tau=0$ as  both are continuous at $0.$

Conversely, suppose that $u(t)=S(t)u_0$ for all $t\in[0,T]$. By Lemma \ref{lem-11},  it suffices to verify  that  $u\in L^p(0,T;W^{1,p}_0(\Omega;\R^N)).$  By Theorem \ref{main-semi-g}(v), we have $u(t)\in D(\partial I)\subset D(I)$ for all $t>0;$  hence $I(u(t))=\F(u(t))$ for all $t>0.$ By condition  (\ref{strong-coer}) and Theorem \ref{main-semi-g}(ii), we have
 \[
 \nu \int_0^T \|Du(t)\|^p_{L^p(\Omega)}\,dt\le \int_0^T I(u(t))\,dt + LT<\infty.
 \]
Therefore, $ u\in L^p(0,T;W^{1,p}_0(\Omega;\R^N)).$ This completes the proof.
\end{proof}

\subsection*{Proof  of Theorem \ref{main-2}}
Thanks to Proposition \ref{pf-main-2}, we only need to verify (i)--(iv) of Theorem \ref{main-2}.

\begin{proof}[Proof of (i) and (ii)] Let $u_0\in  \overline{D(\F)}$ and $u(t)=S(t)u_0$.  By  (\ref{strong-coer}), we have
 \[
 \nu \|Du(t)\|^p_{L^p(\Omega)} \le \F(u(t)) +L =I(u(t)) +L\quad\forall\,t>0.
 \]

By Theorem  \ref{main-semi-g}(v), for any $\delta>0$,
\[
\nu \|Du(t)\|^p_{L^p(\Omega)}\le I(u(\delta)) +L\quad\quad\forall\,t>\delta;
\]
thus, $u\in  L^\infty(\delta,\infty; W^{1,p}_0(\Omega;\R^N)).$

If $u_0\in  D(\F)=D(I),$ then, by Theorem  \ref{main-semi-g}(v)(vi),
\[
\nu \|Du(t)\|^p_{L^p(\Omega)}\le I(u_0) +L\quad\quad\forall\,t>0;
\]
that is, $u\in  L^\infty(0,\infty; W^{1,p}_0(\Omega;\R^N)).$
\end{proof}

\begin{proof}[Proof of (iii)]  This is an immediate consequence of Lemma \ref{lem-1}.
\end{proof}

\begin{proof}[Proof of (iv)] Let $p\ge \frac{2n}{n+2}$ and  $X=L^2(\Omega;\R^N)\cap W_0^{1,p}(\Omega;\R^N).$ If $p\ge 2$, then  $W_0^{1,p}(\Omega;\R^N)\subset L^p(\Omega;\R^N)\subseteq L^2(\Omega;\R^N);$ if $p<2,$ then $p^*= \frac{pn}{n-p} \ge 2$  and thus, by the Sobolev embedding theorem,  $W_0^{1,p}(\Omega;\R^N)\subset L^{p^*}(\Omega;\R^N)\subseteq  L^2(\Omega;\R^N)$. Therefore, we have $X=W_0^{1,p}(\Omega;\R^N).$ Hence  $I=\F$ on $X=W_0^{1,p}(\Omega;\R^N).$

We now show that $I$ has a minimizer on $L^2(\Omega;\R^N);$ this follows from the standard direct method in the calculus of variations. For example,  let $\{u_j\}_{j=1}^\infty$ be a minimizing sequence of $I$ on $L^2(\Omega;\R^N);$ that is,
\[
\lim_{j\to \infty} I(u_j) =\inf_{u\in L^2(\Omega;\R^N)} I(u)=:m \le I(0)=\F(0)<\infty.
\]
Then we can assume $I(u_j)<\infty$ for all $j\ge1$ so that $u_j\in X$ and $I(u_j)=\F(u_j)$ for all $j\ge1$. Coercivity (\ref{strong-coer}) implies that  $\{u_j\}$ is bounded in $W^{1,p}_0(\Omega;\R^N).$ Without relabeling, we assume $u_j\wcon \bar u\in W^{1,p}_0(\Omega;\R^N)$. Given any $\epsilon>0,$   let $K\ge1$ be an integer such that
\[
I(u_j)=\F(u_j)\le m+\epsilon \quad \forall\, j\ge K.
\]
Again, by Mazur's lemma, a sequence $\{v_k\}_{k=1}^\infty$ of convex combinations of $\{u_j\}_{j=K}^\infty$ will strongly converge to $\bar u$ in $W^{1,p}_0(\Omega;\R^N)$. By the convexity of $\F$ on $X$, we
have $\F(v_k)\le m+\epsilon$. By the lower semicontinuity  of $\F$ on $W^{1,p}_0(\Omega;\R^N),$ we have  $I(\bar u)=\F(\bar u)\le m+\epsilon.$ This proves that $\bar u$ is  a minimizer of $I$ on $L^2(\Omega;\R^N).$ Clearly, any minimizer of $I$ on $L^2(\Omega;\R^N)$ is also a minimizer of $\F$ on $W^{1,p}_0(\Omega;\R^N).$

By Theorem \ref{asym-thm}, $u(t)\wcon u^*$ in $L^2(\Omega;\R^N)$ as $t\to \infty,$ for some minimizer  $u^*$ of $I$ on $L^2(\Omega;\R^N),$ which is also a minimizer of $\F$ on $W^{1,p}_0(\Omega;\R^N).$
Since $I(u(t))=\F(u(t))$ is nonincreasing on $t\in (0,\infty),$ it follows that $\lim\limits_{t\to\infty} \F(u(t))$ exists; moreover, $\F(u(t))\le \F(u(1))$ for all $t\ge 1,$ which, by (\ref{strong-coer}), implies that $\{u(t)\}_{t\ge 1}$ is bounded in $W^{1,p}_0(\Omega;\R^N).$ From this and the convergence $u(t)\wcon u^*$ in $L^2(\Omega;\R^N)$ as $t\to \infty,$  it follows  that $u(t)\wcon u^*$ in $W^{1,p}_0(\Omega;\R^N)$ as $t\to \infty,$ which also implies that $u(t)\to u^*$ in $L^p(\Omega;\R^N)$ as $t\to \infty.$

Finally, assume  $p>\frac{2n}{n+2}.$ If $p\ge 2$, then   $u(t)\to u^*$ in $L^p(\Omega;\R^N)$ and thus in $L^2(\Omega;\R^N)$ as $t\to \infty.$ If $\frac{2n}{n+2}<p<2,$ then $p^*=\frac{np}{n-p}>2;$ hence, by the Sobolev compact imbedding theorem,  we still have $u(t)\to u^*$ in $L^2(\Omega;\R^N)$ as $t\to \infty.$

Moreover, from  $-u'(t)\in \partial I(u(t))=\mathcal F(u(t))$ and $\F(u^*)\ge \F(u(t))-(u'(t), u^*-u(t))$ for a.e.\,$t>0$ and $\|u'\|_{L^\infty(1,\infty;H)}\le \|A^0(u(1))\|$, it follows  that \[
\F(u^*)\ge   \F(u(t))-\|A^0(u(1))\| \|u^*-u(t)\|, \quad \mbox{a.e.$\; t\ge 1.$}
\]
Since $ \lim\limits_{t\to\infty}  \|u^*-u(t)\| = 0$ and $\lim\limits_{t\to\infty} \F(u(t))$ exists,  the above inequality shows that
\[
\F(u^*)\ge \lim_{t\to\infty} \F(u(t)),
\]
 which proves that  $ \lim\limits_{t\to\infty}  \F(u(t))=\F(u^*)$ since  $\F(u(t)) \ge \F(u^*)$ for all $t>0.$
\end{proof}

 \section{Integral Convexity and  Monotonicity Conditions}\label{s-4}

In this final section, we make some remarks on $W^{1,p}_0$-integral convexity for functions of the form   $f=f(x,\xi)$, where $f(x,\xi)$ is  $C^1$ in $\xi\in \R^{N\times n}$ for a.e.\,$x\in\Omega$ and measurable in $x\in \Omega$ for all $\xi\in\R^{N\times n}$.

In what follows, let $f(x,\xi)$ be  such a function and  define
\[
 A_f(x,\xi)=D_\xi f(x,\xi)\colon\Omega\times \R^{N\times n}\to \R^{N\times n}.
\]
Let $1\le p\le \infty.$ We shall always  assume  that the  growth condition:
\begin{equation}\label{growth-1}
\begin{cases}  |A_f(x,\xi)|  \le c_0(x)+c_1  |\xi|^{p-1}   &\mbox{if $p\in [1,\infty);$}\\
|A_f(x,\xi)|  \le c_2(x)+c_3 (\xi)   &\mbox{if $p=\infty$,}\end{cases}
  \end{equation}
 holds for  a.e.$\,x\in\Omega$ and all $\xi \in  \R^{N\times n},$
where $c_1>0$ is a constant,  and $c_0\in L^{\frac{p}{p-1}}(\Omega), \, c_2 \in L^{1}(\Omega)$ and $ c_3\in C(\R^{N\times n})$ are some positive functions.

 \begin{pro}\label{eq-condition} Let $1\le p\le \infty.$  Then  $f$ is $W_0^{1,p}$-integral convex  if and only if   $A_f$ is {\em $W_0^{1,p}$-integral monotone} in the sense that
\begin{equation}\label{equiv-1}
\int_\Omega [A_f(x,D\psi+ D\phi)-A_f(x,D\psi)] :D\phi\,dx  \ge 0 \quad \forall\, \phi,\psi \in W_0^{1,p}(\Omega;\R^N).
\end{equation}
If $p=\infty$, then condition (\ref{equiv-1}) is equivalent to
 \begin{equation}\label{equiv-2}
\int_\Omega [A_f(x,D\psi+ D\phi)-A_f(x,D\psi)] :D\phi\,dx  \ge 0 \quad \forall\, \phi,\psi \in C^\infty_0(\Omega;\R^N).
\end{equation}
 Moreover,  if $f(x,\xi)$ is $C^2$ in $\xi,$       then condition (\ref{equiv-2}) is equivalent to
 \begin{equation}\label{equiv-3}
 \int_\Omega D^2f(x,D\psi)(D\phi,D\phi)\,dx\ge 0 \quad \forall\, \phi, \psi \in C_0^\infty(\Omega;\R^N),
  \end{equation}
  where for a.e.\,$x\in\Omega$ and all $\xi,  \eta\in \R^{N\times n}$,  $D^2f(x,\xi)(\eta,\eta)$ is defined by
\[
  D^2f(x,\xi)(\eta,\eta):=  \sum_{i,k=1}^N\sum_{j,l=1}^n \frac{\partial^2 f(x, \xi)}{\partial \xi_{ij}\partial \xi_{kl}}\eta_{ij}\eta_{kl}.
\]
  \end{pro}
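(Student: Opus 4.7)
The plan is to split the proof into three parts matching the three equivalences in the statement, handling them in order.

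\textbf{Part 1: $W^{1,p}_0$-integral convexity of $f$ $\iff$ $W^{1,p}_0$-integral monotonicity of $A_f$.} I would reduce convexity of $\F$ on $W^{1,p}_0(\Omega;\R^N)$ to convexity, in the scalar variable $t$, of all slice functions
\[
h_{\psi,\phi}(t)=\F(\psi+t\phi),\qquad t\in\R,
\]
for $\psi,\phi\in W^{1,p}_0(\Omega;\R^N)$; this reduction follows from the standard choice $\psi=u$, $\phi=v-u$ with $t\in[0,1]$. Next, the growth condition (\ref{growth-1}) together with H\"older's inequality permits differentiation under the integral sign, yielding $h_{\psi,\phi}\in C^1(\R)$ with
\[
h_{\psi,\phi}'(t)=\int_\Omega A_f(x,D\psi+tD\phi):D\phi\,dx.
\]
Convexity of $h_{\psi,\phi}$ is then equivalent to $h_{\psi,\phi}'$ being nondecreasing on $\R$. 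The single inequality $h_{\psi,\phi}'(1)\ge h_{\psi,\phi}'(0)$ is precisely (\ref{equiv-1}); conversely, given (\ref{equiv-1}), the nondecreasing property on all of $\R$ follows by a translation argument: for $t_1<t_2$, apply (\ref{equiv-1}) with $\tilde\psi=\psi+t_1\phi\in W^{1,p}_0$ and $\tilde\phi=(t_2-t_1)\phi$, and divide by $t_2-t_1>0$.

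\textbf{Part 2: For $p=\infty$, $(\ref{equiv-1})\Longleftrightarrow(\ref{equiv-2})$.} The forward implication is trivial since $C^\infty_0\subset W^{1,\infty}_0$. For the converse, given $\psi,\phi\in W^{1,\infty}_0(\Omega;\R^N)$, I would construct sequences $\psi_k,\phi_k\in C^\infty_0(\Omega;\R^N)$ with $\sup_k\bigl(\|\psi_k\|_{W^{1,\infty}}+\|\phi_k\|_{W^{1,\infty}}\bigr)<\infty$ and $D\psi_k\to D\psi$, $D\phi_k\to D\phi$ a.e.\ in $\Omega$, which is achieved via zero-extension to $\R^n$, mollification, and a smooth cutoff whose gradient stays uniformly bounded. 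Applying (\ref{equiv-2}) to $(\psi_k,\phi_k)$ and invoking the dominated convergence theorem — the integrand being bounded by an $L^1(\Omega)$ function uniformly in $k$ thanks to the second line of (\ref{growth-1}) and the uniform bound on the gradients — yields (\ref{equiv-1}) in the limit.

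\textbf{Part 3: When $f\in C^2$ in $\xi$, $(\ref{equiv-2})\Longleftrightarrow(\ref{equiv-3})$.} For $\phi,\psi\in C^\infty_0$ the fundamental theorem of calculus gives
\[
[A_f(x,D\psi+D\phi)-A_f(x,D\psi)]:D\phi=\int_0^1 D^2f(x,D\psi+tD\phi)(D\phi,D\phi)\,dt.
\]
Integrate over $\Omega$ and swap integrations by Fubini (justified because $\psi,\phi$ have compact support and $D^2f$ is bounded on the relevant compact range of $\xi$); since $\psi+t\phi\in C^\infty_0$ for each $t\in[0,1]$, condition (\ref{equiv-3}) applied to the pair $(\psi+t\phi,\phi)$ makes each inner slice nonnegative, proving (\ref{equiv-2}). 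Conversely, if (\ref{equiv-2}) holds, replace $\phi$ with $\e\phi$, divide by $\e^2>0$, and let $\e\to 0^+$; the integrand converges pointwise to $D^2f(x,D\psi)(D\phi,D\phi)$ and is uniformly bounded on the common compact support by continuity of $D^2f$, so DCT yields (\ref{equiv-3}).

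\textbf{Expected main obstacle.} The one genuinely delicate step is the approximation in Part 2: $C^\infty_0(\Omega;\R^N)$ is \emph{not} dense in $W^{1,\infty}_0(\Omega;\R^N)$ in the $W^{1,\infty}$-norm. Fortunately only a.e.\ convergence of the gradients together with a uniform $L^\infty$-bound is needed, and the combination of zero-extension, mollification, and a smooth cutoff delivers exactly that; the remaining passage to the limit is a routine application of the dominated convergence theorem against the growth condition (\ref{growth-1}).
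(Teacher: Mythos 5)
Your proposal is correct and follows essentially the same route as the paper: reduce convexity of $\F$ to convexity of the slice functions $h(t)=\F(\psi+t\phi)$, use the growth condition to differentiate under the integral sign, and characterize convexity via monotonicity of $h'$ (Part 1) and nonnegativity of $h''$ (Part 3, where your FTC/difference-quotient argument is just a more explicit rendering of the paper's direct $h''(t)$ computation). The only place you add content is Part 2, where the paper dismisses the $C^\infty_0$-to-$W^{1,\infty}_0$ passage as ``easily seen''; your mollify-and-cut-off argument is a reasonable way to fill that in, though it does quietly require that a zero-extension of a $W^{1,\infty}_0$ function is globally Lipschitz, which needs the vanishing-on-the-boundary property to hold in a pointwise Lipschitz sense.
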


\begin{proof} 1. Let $\F(u)=\int_\Omega f(x, Du)\,dx$ for $u\in W_0^{1,p}(\Omega;\R^N)$. Then the  $W_0^{1,p}$-integral convexity of $f$ is equivalent to the convexity of function $h(t)=\F(u+tv)$ in $t\in\R$  for all   $u,v\in W_0^{1,p}(\Omega;\R^N).$ Note that
 condition (\ref{growth-1}) implies that for all   $u,v\in W_0^{1,p}(\Omega;\R^N),$ the function $h(t)=\F(u+tv)$ is differentiable  in $t\in\R,$ with
 \[
h'(t)= \int_\Omega A_f(x,Du+tDv):Dv\,dx,
\]
and thus the convexity of function $h(t)=\F(u+tv)$ in $t\in\R$   is equivalent to that  $h'(t)$ is  nondecreasing  in $t\in\R.$ Note that for  $t,s\in\R$,
\[
\begin{split}
[h'(t)-h'(s)](t-s) & =\int_\Omega [ A_f(x,Du+tDv)-A_f(x, Du+sDv)]: (t-s)Dv\,dx\\
& = \int_\Omega [A_f(x,D\psi+ D\phi)-A_f(x,D\psi)] :D\phi\,dx,
\end{split}
\]
where  $\psi=u+sv$ and $\phi=(t-s)v.$ Thus, $h'$ is nondecreasing on $\R$ for all   $u,v\in W_0^{1,p}(\Omega;\R^N)$ if and only if (\ref{equiv-1}) holds.

2. If $p=\infty,$ it is easily seen that (\ref{equiv-1}) and (\ref{equiv-2}) are equivalent. Now assume $f$ is $C^2$ in $\xi.$ Let $u,v\in C_0^\infty(\Omega;\R^N)$ and $h(t)=\F(u+tv).$ Then
\[
\begin{split}
h''(t) & = \sum_{i,k=1}^N\sum_{j,l=1}^n \int_\Omega \frac{\partial^2 f}{\partial\xi_{ij}\partial \xi_{kl}}(x, Du+tDv)\frac{\partial v^i}{\partial x_j} \frac{\partial v^k}{\partial x_l}\,dx \\
& =\sum_{i,k=1}^N\sum_{j,l=1}^n \int_\Omega \frac{\partial^2 f}{\partial\xi_{ij}\partial \xi_{kl}}(x, D\psi )\frac{\partial \phi^i}{\partial x_j} \frac{\partial \phi^k}{\partial x_l}\,dx = \int_\Omega D^2f(x,D\psi)(D\phi,D\phi)\,dx,
\end{split}
\]
where  $\psi=u+tv$ and $\phi=v.$ Therefore, $h'$ is nondecreasing on $\R$ for all $u,v\in C_0^\infty(\Omega;\R^N)$ if and only if $h''\ge 0$  on $\R$ for all $u,v\in C_0^\infty(\Omega;\R^N)$, which is the case  if and only if (\ref{equiv-3}) holds.
\end{proof}

\begin{pro} Suppose that $f$ is  integral convex.  Then for almost every $x_0\in\Omega,$ the map  $A_f(x_0,\cdot)\colon \R^{N\times n}\to \R^{N\times n}$ is {\em quasimonotone} in the sense that
\begin{equation}\label{q-mono-03}
\int_G  A_f (x_0,\xi+ D\eta(y)) :D\eta(y)\,dy  \ge 0 \quad \forall\, \xi\in \R^{N\times n}, \, \eta \in C_0^\infty(G;\R^N),
\end{equation}
where $G$ is any nonempty bounded open set in $\R^n.$ Therefore for
almost every $x_0\in\Omega,$ the function
$f(x_0,\cdot)\colon \R^{N\times n}\to\R$ is {\em quasiconvex.}
\end{pro}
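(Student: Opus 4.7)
The plan is to derive quasimonotonicity from integral monotonicity by localizing test functions at scale $r$ around a point $x_0$ and then letting $r\to 0^+.$ By Proposition \ref{eq-condition}, integral convexity of $f$ is equivalent to (\ref{equiv-2}), i.e.
\[
\int_\Omega [A_f(x,D\psi+ D\phi)-A_f(x,D\psi)] :D\phi\,dx  \ge 0 \quad \forall\, \phi,\psi \in C^\infty_0(\Omega;\R^N).
\]
Fix $\xi\in\R^{N\times n}$, a bounded open set $G\subset\R^n$ and $\eta\in C_0^\infty(G;\R^N)$. For a base point $x_0\in\Omega$ and $r>0$ small enough so that $x_0+rG\Subset\Omega$, define
\[
\phi_r(x)=r\,\eta\!\left(\tfrac{x-x_0}{r}\right),\qquad \psi_r(x)=\chi(x)\,\xi(x-x_0),
\]
where $\chi\in C_0^\infty(\Omega)$ is a cutoff with $\chi\equiv 1$ on a neighborhood of $\overline{x_0+rG}.$ Then $\phi_r,\psi_r\in C_0^\infty(\Omega;\R^N)$, $D\psi_r=\xi$ on the support of $\phi_r$, and substituting into (\ref{equiv-2}) and rescaling $y=(x-x_0)/r$ yields
\[
\int_G \bigl[A_f(x_0+ry,\xi+D\eta(y))-A_f(x_0+ry,\xi)\bigr]:D\eta(y)\,dy\ge 0.
\]

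The central step is passing to the limit $r\to 0^+$ and recovering a pointwise-in-$x_0$ statement. I would pair the inequality against an arbitrary nonnegative $\rho\in C_0^\infty(\Omega)$, integrate in $x_0$, apply Fubini, and change variables $z=x_0+ry$ in the inner integral. Because $\rho$ is smooth with compact support and $|A_f(z,\xi+D\eta(y))|\le c_2(z)+c_3(\xi+D\eta(y))\le c_2(z)+C$ uniformly in $y\in G$ (by (\ref{growth-1}) for $p=\infty$), dominated convergence gives
\[
0\le \int_\Omega \rho(x_0)\int_G \bigl[A_f(x_0,\xi+D\eta(y))-A_f(x_0,\xi)\bigr]:D\eta(y)\,dy\,dx_0
\]
in the limit. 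Varying $\rho$ shows the inner integral is nonnegative for a.e.\ $x_0\in\Omega$; moreover, since $\int_G D\eta(y)\,dy=0$ by the divergence theorem, $\int_G A_f(x_0,\xi):D\eta(y)\,dy=0$, which reduces the inequality to the claimed quasimonotonicity (\ref{q-mono-03}) at $x_0$.

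The null set of bad $x_0$ so far depends on the pair $(\xi,\eta,G)$. To obtain a single null set valid for all choices, I would choose a countable exhaustion $G_m\nearrow\R^n$ by bounded open sets, countable dense subsets $\{\xi_k\}\subset\R^{N\times n}$ and $\{\eta_{m,l}\}\subset C_0^\infty(G_m;\R^N)$ (in $C^1$ norm with fixed containing compact support), and take the union $N$ of the corresponding null sets. For $x_0\notin N$, the inequality (\ref{q-mono-03}) holds along the countable family, and then continuity of $A_f(x_0,\cdot)$ in $\xi$, combined with the uniform bound from the growth hypothesis, extends it to all $\xi\in\R^{N\times n}$ and all $\eta\in C_0^\infty(G;\R^N)$ for any bounded open $G\subset\R^n$. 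Finally, quasiconvexity follows from quasimonotonicity exactly as in the argument already sketched in the introduction: for any such $\xi,\eta,G$ the function $h(t)=\int_G f(x_0,\xi+tD\eta(y))\,dy$ satisfies $h'(t)=\int_G A_f(x_0,\xi+tD\eta):D\eta\,dy\ge 0$ for all $t\ge 0$ (by applying (\ref{q-mono-03}) with $\xi$ replaced by $\xi+tD\eta$... actually more directly from $\xi+tD\eta=\xi+D(t\eta)$), so $h(1)\ge h(0)$, which is (\ref{qx}) for $f(x_0,\cdot)$ on $G$.

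The main obstacle I anticipate is the passage to the limit as $r\to 0^+$: establishing a genuine a.e.-in-$x_0$ statement (rather than only a distributional one) in a way that is compatible with the subsequent density extension over $(\xi,\eta)$. The Fubini-and-test-function device described above sidesteps the need for Scorza--Dragoni or delicate Lebesgue-point arguments in the joint variable, and the uniform growth bound from (\ref{growth-1}) is what makes the dominated convergence legitimate.
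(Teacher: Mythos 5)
Your argument is correct and follows essentially the same route as the paper's: reduce via Proposition \ref{eq-condition} to integral monotonicity, rescale the test functions to localize around a base point, average out the base-point dependence before passing to the limit (the paper integrates the base point over a ball $B_\delta(x_0)$ and applies Lebesgue's differentiation theorem, whereas you test against a nonnegative $\rho\in C_0^\infty(\Omega)$ and use that $\int\rho F\ge 0$ for all such $\rho$ forces $F\ge 0$ a.e.), use the growth condition for dominated convergence, and finally a countable-density argument to obtain a single null set. These two averaging devices are interchangeable here, so the difference is cosmetic rather than a genuinely different route.
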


\begin{proof}   By a density argument,  it suffices to show that for all $\xi\in \R^{N\times n}$ and $\eta\in C_0^\infty(G;\R^N),$ where  $G\ne\emptyset$ is  bounded and open  in $\R^n$,  one has
\begin{equation}\label{q-mono-04}
\int_G  A_f (x_0,\xi+ D\eta(y)) :D\eta(y)\,dy  \ge 0, \quad  \mbox{a.e.}\; x_0\in \Omega.
\end{equation}

Fix any $\xi\in \R^{N\times n}$ and $\eta\in C_0^\infty(G;\R^N).$ Let  $x_0\in\Omega$,  $0<\delta <\dist(x_0,\partial \Omega)/4$ and $a\in G$. Then there exists $r>0$ such that
\[
G_\epsilon(z):=z+\epsilon (G-a)\subset B_{2\delta}(x_0) \subset \subset  \Omega\quad \forall\,  z\in B_\delta(x_0), \; \epsilon \in (0,r).
\]
For  each $\epsilon \in (0,r),$ we define function $ \phi_{\epsilon}\colon B_\delta(x_0)\times \Omega\to\R^N$ by
\[
 \phi_{\epsilon}(z,x)=\begin{cases} \epsilon \eta(a+\frac{x-z}{\epsilon}) & \mbox{if $x\in G_\epsilon(z),$}\\
0 &\mbox{if $x\in \Omega\setminus G_\epsilon(z).$}\end{cases}
\]
Then for each $z\in B_\delta(x_0),$  we have $\phi_\epsilon(z,\cdot)\in C_0^\infty (\Omega;\R^N)$ with  $\supp\phi_\epsilon(z,\cdot)\subset \bar B_{2\delta}(x_0) \subset \subset  \Omega.$
Also, let  $\psi(x)=\zeta(x) \xi x,$  where $\zeta\in C^\infty_0(\Omega)$ with $\zeta =1$ on $B_{2\delta}(x_0).$
  Thus $\psi\in C_0^\infty(\Omega;\R^N)$ and $D\psi=\xi$ on $B_{2\delta}(x_0).$

  We apply inequality (\ref{equiv-2}) in Proposition \ref{eq-condition} with functions $\psi$ and $\phi=\phi_\epsilon(z,\cdot)$ and integrate it over $z\in B_\delta(x_0)$ to obtain that
\[
\begin{split} 0 & \le \int_{B_\delta(x_0)} \left(\int_\Omega [A_f(x,D\psi(x)+ D_x\phi_\epsilon(z,x))-A_f(x,D\psi(x))] :D_x\phi_\epsilon(z,x)dx\right)dz  \\
&= \int_{B_\delta(x_0)} \left(\int_\Omega [A_f(x,\xi+ D_x\phi_\epsilon(z,x))-A_f(x,\xi)] :D_x\phi_\epsilon(z,x)dx\right )dz \\
&=\int_{B_\delta(x_0)} \left(\int_{G_\epsilon(z)} \Big[A_f \Big(x,\xi+ D\eta \Big(a+\frac{x-z}{\epsilon} \Big)  \Big)-A_f(x,\xi)\Big] :D\eta \Big(a+\frac{x-z}{\epsilon} \Big) \,dx\right)dz
\\
&=\epsilon^n  \int_{B_\delta(x_0)} \left( \int_{G} [A_f (z+\epsilon (y-a),\xi+ D\eta(y) )-A_f (z+\epsilon (y-a),\xi)] :D\eta(y) \,dy\right)dz\\
& =\epsilon^n \int_{G} \left( \int_{B_\delta(x_0)} [A_f  (z+\epsilon (y-a),\xi+ D\eta(y) )-A_f  (z+\epsilon (y-a),\xi)] :D\eta(y) \,dz\right )dy\\
&=\epsilon^n \int_{G} \left( \int_{B_\delta(x_0)+\epsilon(y-a)} [A_f  (z',\xi+ D\eta(y) )-A_f  (z',\xi)] :D\eta(y) \,dz' \right )dy\\
&=\epsilon^n \int_{G} \int_{B_{2\delta}(x_0)} \chi_\epsilon(z',y) [A_f  (z',\xi+ D\eta(y) )-A_f  (z',\xi)] :D\eta(y) \,dz' dy,
\end{split}
\]
where $\chi_\epsilon(z',y)=\chi_{B_\delta(x_0)+\epsilon(y-a)}(z')$ is the characteristic function of the set $B_\delta(x_0)+\epsilon(y-a),$ which lies in $B_{2\delta}(x_0)$ for each $y\in G.$ Consequently, we have
\[
\int_{G} \int_{B_{2\delta}(x_0)} \chi_\epsilon(z',y) [A_f  (z',\xi+ D\eta(y) )-A_f  (z',\xi)] :D\eta(y) \,dz' dy\ge 0\quad \forall\; \epsilon \in (0,r).
\]

Note that $\chi_\epsilon(z',y)\to \chi_{B_\delta(x_0)}(z')$ as $\epsilon\to 0^+$ for  a.e.\,$(z',y)\in B_{2\delta}(x_0)\times G.$
Hence, in the   inequality above, letting $\epsilon\to 0^+$,  by Lebesgue's dominated convergence theorem, we obtain
\[
\begin{split}
0 &\le \int_{G} \int_{B_{\delta}(x_0)}  [A_f  (z',\xi+ D\eta(y) )-A_f  (z',\xi)] :D\eta(y) \,dz' dy\\
&=\int_{B_{\delta}(x_0)} \left (\int_{G}  [A_f  (z',\xi+ D\eta(y) )-A_f  (z',\xi)] :D\eta(y)   dy \right ) dz'. \end{split}
\]
This inequality holds for all $x_0\in\Omega$ and  $0<\delta <\dist(x_0,\partial \Omega)/4;$  therefore, by Lebesgue's differentiation theorem, we have
\[
\int_{G}  [A_f  (x_0,\xi+ D\eta(y) )-A_f  (x_0,\xi)] :D\eta(y) dy\ge 0, \quad  \mbox{a.e.}\; x_0 \in \Omega,
\]
from which (\ref{q-mono-04}) follows because
$
\int_G A_f  (x_0 ,\xi)  :D\eta(y) \,dy=0.
$

Finally, the quasiconvexity of $f(x_0,\cdot)$  follows from the quasimonotonicity condition
(\ref{q-mono-03}) as explained in the introduction. This completes the proof.
\end{proof}

From the results above, we see that  for functions of the simple form $f=f(\xi) \in C^1(\R^{N\times n}),$  the integral convexity of $f$ is equivalent to  the integral monotonicity of $Df\colon \R^{N\times n} \to \R^{N\times n},$   which in turn implies the quasimonotonicity  of $Df$ and thus the quasiconvexity of $f.$

We now give an example to show that  in general the quasimonotonicity of $Df$  does not imply the  integral convexity of  $f=f(\xi).$

For this purpose, we consider the  function $g$ introduced in (\ref{fun-g}); that is,
\begin{equation}\label{fun-g-1}
g(\xi)= |\xi|^4 + k (\det \xi)^2 \quad \forall\, \xi\in \R^{2\times 2},
\end{equation}
where $k\in \R$ is a constant.  Note that
 \begin{equation}\label{g-12}
 \begin{split}
 Dg(\xi) & =4|\xi|^2\xi + 2k(\det \xi ) \cof \xi,
\\
 D^2g(\xi)(\eta,\eta)&  =8(\xi:\eta)^2 +4|\xi|^2|\eta|^2+2k (\eta:\cof \xi)^2  +4k \det \xi\det\eta,
\end{split}
\end{equation}
where $\cof \xi$ is  the cofactor matrix of $\xi\in \R^{2\times 2}$, which satisfies:
\[
\xi:\cof \xi= 2\det\xi\quad\mbox{and}\quad \det(\xi+\eta)=\det\xi+\det\eta+\eta:\cof\xi
\]
for all $ \xi$ and $\eta$ in  $\R^{2\times 2}.$

We introduce the notations:
\[
\xi^+=\frac12(\xi+\cof \xi)\quad\mbox{and}\quad \xi^-=\frac12(\xi-\cof\xi).
\]
Then the following identities can be easily verified:
\begin{equation}\label{conf-m}
\begin{split}   & |\xi|^2=|\xi^+|^2+|\xi^-|^2,\quad
 \det\xi
 =\frac12 (|\xi^+|^2-|\xi^-|^2), \\
 \xi:\eta&=\xi^+:\eta^++\xi^-:\eta^-, \quad
  \eta:\cof \xi  =\xi^+:\eta^+-\xi^-:\eta^-.
\end{split}
\end{equation}

\begin{pro}\label{counter}  The following statements are true:

\begin{itemize}

 \item[(i)]   $Dg$ is quasimonotone if  $0\le k\le 8.$ In fact, we have
\begin{equation}\label{g-1}
\int_\Omega  Dg(\xi+D\phi): D\phi  \ge \frac{8-k}{8} \int_\Omega |D\phi|^4  \quad \forall\, \xi\in \R^{2\times 2},\, \phi\in W^{1,4}_0(\Omega;\R^2).
\end{equation}
 Hence,   if $0\le k<8,$ then  $Dg$  is {\em strongly  quasimonotone on $W^{1,4}_0(\Omega;\R^2)$} (defined in the obvious way).
\item[(ii)] $g$ is integral convex if and only if $-2\le k\le 4,$  in which case $g$ is in fact convex.
 \end{itemize}

 Therefore  if $4<k\le 8$, then $Dg$ is quasimonotone but   $g$ is not integral convex.
\end{pro}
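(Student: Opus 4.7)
For part (i), I compute $\int_\Omega Dg(\xi+D\phi):D\phi\,dx$ directly from $Dg(\zeta) = 4|\zeta|^2\zeta + 2k(\det\zeta)\cof\zeta$ with $\zeta=\xi+D\phi$. Because $\xi$ is constant and $\phi\in W_0^{1,4}(\Omega;\R^2)$, the null-Lagrangian identities $\int D\phi\,dx=0$, $\int\cof D\phi\,dx=0$ (valid since $\cof$ is linear on $\R^{2\times 2}$), and $\int\det D\phi\,dx=0$ eliminate every cross term that is affine in $D\phi$ with a $\xi$-dependent coefficient. After this cancellation, the integrand reduces to $4|D\phi|^4 + 4k(\det D\phi)^2$ plus manifestly nonnegative $\xi$-dependent contributions; invoking the $2\times 2$ Hadamard inequality $|D\phi|^4 \ge 4(\det D\phi)^2$ to absorb the determinant-square term then yields the claimed lower bound $\tfrac{8-k}{8}\int|D\phi|^4$, which gives quasimonotonicity of $Dg$ for $0\le k\le 8$ and strong quasimonotonicity for $0\le k<8$.

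For the "if" direction of part (ii), I use the $\xi^{\pm}$-decomposition to show that $g$ is in fact convex when $-2\le k\le 4$. Setting $A=|\xi^+|^2$, $B=|\xi^-|^2$, $C=|\eta^+|^2$, $D=|\eta^-|^2$, $a=\xi^+{:}\eta^+$, $b=\xi^-{:}\eta^-$ (so $a^2\le AC$ and $b^2\le BD$ by Cauchy--Schwarz), identities (\ref{conf-m}) convert the Hessian formula in the statement into
\[
D^2g(\xi)(\eta,\eta) \;=\; (8+2k)(a^2+b^2) + (16-4k)ab + (4+k)(AC+BD) + (4-k)(AD+BC).
\]
Applying $a^2+b^2\ge 2|ab|$, $AC+BD\ge 2\sqrt{ACBD}\ge 2|ab|$, and $AD+BC\ge 2\sqrt{ABCD}\ge 2|ab|$ with the correct signs of $8+2k$, $16-4k$, $4\pm k$ (all consistent in the range $-2\le k\le 4$) yields $D^2g(\xi)(\eta,\eta)\ge (16+8k)|ab|\ge 0$. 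Convexity trivially implies integral convexity.

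For the "only if" direction I distinguish two cases. \textbf{Case $k<-2$:} the rank-one pair $\xi=\sqrt{2}\,e_1\otimes e_1$, $\eta=-\sqrt{2}\,e_2\otimes e_2$ satisfies $\xi{:}\eta=0$, $\eta{:}\cof\xi=-2$, and $\det\xi=\det\eta=0$, so $D^2g(\xi)(\eta,\eta)=16+8k<0$; hence $g$ fails rank-one convexity, hence quasiconvexity, hence integral convexity. \textbf{Case $k>4$:} this is subtler because $g$ remains quasiconvex by (i). The key observation, again via the $\pm$-decomposition, is that the test functional $\int D^2g(D\psi)(D\phi,D\phi)\,dx = \alpha(\psi,\phi)+k\beta(\psi,\phi)$ is affine in $k$, and a direct computation yields
\[
\alpha+4\beta \;=\; \int\bigl[16(\zeta^+{:}\eta^+)^2 + 16(\zeta^-{:}\eta^-)^2 + 8|\zeta^+|^2|\eta^+|^2 + 8|\zeta^-|^2|\eta^-|^2\bigr]\,dx \;\ge\; 0
\]
(with $\zeta=D\psi$, $\eta=D\phi$), vanishing formally when $D\psi$ is symmetric trace-free (i.e.\ $\psi=\nabla f$ with $f$ harmonic) and $D\phi$ is conformal (i.e.\ $\phi$ holomorphic as a map $\Omega\to\R^2\cong\mathbb C$), in which case moreover $\beta=-\int|D\psi|^2|D\phi|^2<0$. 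Thus in such idealized configurations $\alpha/|\beta|\to 4$, and for every $k>4$ one expects $\alpha+k\beta<0$.

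The main technical obstacle is that no nontrivial $C_0^\infty$ function $\psi$ can have $D\psi$ symmetric trace-free everywhere, nor $\phi$ have $D\phi$ conformal everywhere, by unique continuation. I therefore construct approximating sequences $\psi_n=\chi_n\nabla f_n$ and $\phi_n=\chi_n(u_n,v_n)$ with $f_n$ harmonic and $u_n+iv_n$ holomorphic on progressively larger interior regions $\{\chi_n=1\}$, tuning $f_n,u_n,v_n$ and the cutoffs $\chi_n$ so that the negative interior contribution $(4-k)\int_{\{\chi_n=1\}}|D\psi_n|^2|D\phi_n|^2\,dx$ dominates the boundary-layer contributions for each $k>4$. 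Once such an admissible counterexample $(\psi_n,\phi_n)$ is produced, Proposition \ref{eq-condition} (via (\ref{equiv-3})) yields the failure of integral convexity and completes the proof.
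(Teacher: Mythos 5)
Your part (ii) "if" direction is correct and in fact cleaner than the paper's: you derive the unified identity
\[
D^2g(\xi)(\eta,\eta)=(8+2k)(a^2+b^2)+(16-4k)ab+(4+k)(AC+BD)+(4-k)(AD+BC)
\]
from (\ref{conf-m}) and close with Cauchy--Schwarz and AM--GM to get $D^2g\ge(16+8k)|ab|$, whereas the paper treats the endpoints $k=4$, $k=-2$ separately and interpolates by convex combination. Your case $k<-2$ is also a genuine simplification: the rank-one pair $\xi=\sqrt2\,e_1\otimes e_1$, $\eta=-\sqrt2\,e_2\otimes e_2$ gives $D^2g(\xi)(\eta,\eta)=16+8k<0$, so $g$ fails rank-one convexity, hence quasiconvexity, hence integral convexity; the paper instead runs a second convex-integration construction (with $G(\xi)=4|\xi^1|^2|\xi^2|^2+2k(\det\xi)^2$), which is more work than necessary. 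These two pieces are valid and worth noting as alternatives.

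However, two other pieces do not hold up. In part (i), your claim that after discarding the $D\phi$-affine cross terms (which do vanish since $\int D\phi$, $\int\cof D\phi$, $\int\det D\phi$ are null Lagrangians) the integrand "reduces to $4|D\phi|^4+4k(\det D\phi)^2$ plus manifestly nonnegative $\xi$-dependent contributions" is wrong: the expansion also produces cubic-in-$D\phi$ terms $12(\xi{:}D\phi)|D\phi|^2$ and $6k\,\det D\phi\,(D\phi{:}\cof\xi)$, which are neither null Lagrangians nor sign-definite. The paper absorbs these by completing squares against $8(\xi{:}\eta)^2$, $2k(\eta{:}\cof\xi)^2$ and \emph{most of} the $4|\eta|^4$ term, leaving only $|\eta|^4-\tfrac{k}{2}(\det\eta)^2$; combining with $|\eta|^2\ge2|\det\eta|$ then yields $\tfrac{8-k}{8}|\eta|^4$. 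Your version, if taken literally, would give the stronger lower bound $4\int|D\phi|^4$ for all $\xi$, which cannot be right since those cubic terms are present and sign-indefinite.

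The more serious gap is the case $k>4$ in part (ii). You correctly identify an attractive heuristic: $\alpha+4\beta=\int[16(\zeta^+{:}\eta^+)^2+16(\zeta^-{:}\eta^-)^2+8|\zeta^+|^2|\eta^+|^2+8|\zeta^-|^2|\eta^-|^2]\ge0$ vanishes when $\zeta^+=0$ (i.e.\ $D\psi$ symmetric trace-free) and $\eta^-=0$ ($D\phi$ conformal), with $\beta=-\int|D\psi|^2|D\phi|^2<0$ there, so $\alpha+k\beta=(k-4)\beta<0$. But, as you yourself acknowledge, no nonzero $\psi,\phi\in C^\infty_0$ achieve these pointwise constraints exactly (Liouville/unique continuation), and your proposed fix --- tune cutoffs so that interior negativity beats boundary layers --- is not carried out and is precisely where the difficulty lies. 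Since $Dg$ is quasimonotone for $k\le 8$, naive boundary-layer estimates are bound to be delicate: the single-gradient counterexamples do not transfer to a global compactly supported pair $(\psi,\phi)$ without real work. The paper overcomes this with a rank-one convexity argument at the level of the auxiliary function $H(\xi)=4|\xi|^4+32(\xi^1\cdot\xi^2)^2-4k(\det\xi)^2$ and the Müller--Šverák convex integration lemma, producing a piecewise affine $u\in W^{1,\infty}_0$ with $Du$ confined to an open set where $H<-4\epsilon_0$, and then testing with $(\psi,\phi)=(u,\hat u)$. You would need either to reproduce an argument of this strength, or to supply a genuine quantitative cutoff construction; as written, the key case $k>4$ is unproven.
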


\begin{proof}  1. Let $k\ge 0$ and denote $A(\xi)=Dg(\xi).$ Then elementary  computations show that
 \begin{equation}\label{eq-A}
 \begin{split} (A(\xi+\eta) &-A(\xi))  : \eta  =  4|\xi|^2|\eta|^2+12(\xi:\eta)|\eta|^2 +8(\xi:\eta)^2+4|\eta|^4
 \\
 &+2k\big [2\det \xi\det \eta +2(\det \eta)^2+(\eta:\cof \xi)^2+3\det \eta (\eta:\cof \xi)\big].
 \end{split}
 \end{equation}

 Following \cite{CZ92, Ha95}, we  have
 \[
 \begin{split} (A(\xi+\eta) & -A(\xi))  : \eta     = 4  \big [ |\xi|^2|  \eta|^2-(\xi:\eta)^2\big ] +12\Big (\xi:\eta +\frac12 |\eta|^2 \Big )^2  + |\eta|^4
 \\
 &\quad  +4k\det \xi\det \eta +2k\big [2(\det \eta)^2+(\eta:\cof \xi)^2+3\det \eta (\eta:\cof \xi)\big]\\
 & =   4\big [|\xi|^2|\eta|^2-(\xi:\eta)^2\big ] +12\Big (\xi:\eta +\frac12 |\eta|^2 \Big )^2 +  |\eta|^4
 \\
 &\quad +4k\det \xi\det \eta +2k\bigg [ \Big (\eta:\cof \xi +\frac32  \det \eta\Big )^2  -\frac14(\det \eta)^2\bigg ]\\
 &\ge    |\eta|^4 +4k\det \xi\det \eta -\frac{k}{2} (\det \eta)^2\\
 &\ge    \frac{8-k}{8}   |\eta|^4 +4k\det \xi\det \eta,
  \end{split}
 \]
 where in the first inequality we have dropped three nonnegative terms, and in the second inequality we have used the inequality:
  \begin{equation}\label{det-norm}
 |\eta|^2\ge 2|\det\eta| \quad \forall\, \eta\in \R^{2\times 2}.
 \end{equation}

 Consequently, for all $ \xi\in \R^{2\times 2}$ and $\phi\in W^{1,4}_0(\Omega;\R^2),$ we have
 \[
\begin{split} \int_\Omega  A(\xi+D\phi): D\phi \,dx &=\int_\Omega  [A(\xi+D\phi)-A(\xi)]: D\phi \,dx \\
&\ge \frac{8-k}{8} \int_\Omega |D\phi|^4\,dx+4k \int_\Omega \det\xi \det D\phi\,dx \\
&=\frac{8-k}{8} \int_\Omega |D\phi|^4\,dx,\end{split}
\]
which confirms (\ref{g-1}). Therefore, $Dg$ is quasimonotone if  $0\le k\le 8$ and  strongly quasimonotone on $W^{1,4}_0(\Omega;\R^2)$ if $0\le k<8.$

2.  We show that $g$ is convex for all $-2\le k\le 4.$ Since every $g$ with $k\in[-2,4]$  is  a convex combination of the two functions $g$ with $k=-2$ and $k=4$, it suffices to show that $g$ is convex for $k=4$ and $k=-2.$

First, assume $k=4$. Then
\[
D^2g(\xi)(\eta,\eta)=8(\xi:\eta)^2 +4|\xi|^2|\eta|^2+8(\eta:\cof \xi)^2  +16\det \xi\det\eta.
\]
By (\ref{det-norm}), we have $|\xi|^2|\eta|^2+4\det \xi\det\eta\ge 0$ and thus
\[
D^2g(\xi)(\eta,\eta)\ge  4|\xi|^2|\eta|^2 +16\det \xi\det\eta \ge 0 \quad \forall\, \xi,\,\eta\in \R^{2\times 2}.
\]
This confirms the convexity of function $g$ when $k=4.$

 Next, let $k=-2.$ The convexity of $g$ in this case  is actually established in \cite{DDGR}. Indeed, in this case, we observe that, by identities (\ref{conf-m}),
 \[
 \begin{split}
D^2g(\xi)(\eta,\eta) & =8(\xi:\eta)^2 +4|\xi|^2|\eta|^2-4 (\eta:\cof \xi)^2  -8 \det \xi\det\eta \\
& =8(a+b)^2 +4(c+d)(e+f)-4(a-b)^2-2(c-d)(e-f)\\
& =4(a+b)^2+16ab +2(ce+df) +6(de+cf),
\end{split}
\]
where $a=\xi^+:\eta^+,\,b=\xi^-:\eta^-,\, c=|\xi^+|^2,\,d=|\xi^-|^2,\, e=|\eta^+|^2,$ and $f=|\eta^-|^2.$ Note that
\[
\begin{split}
de+cf &=|\xi^-|^2|\eta^+|^2+|\xi^+|^2|\eta^-|^2 \\
& \ge 2 |\xi^-||\eta^+||\xi^+||\eta^-| \ge 2 |\xi^+:\eta^+||\xi^-:\eta^-|\ge -2 ab,
\end{split}
\]
and
\[
\begin{split}
ce+df & =|\xi^+|^2|\eta^+|^2+|\xi^-|^2|\eta^-|^2 \\
& \ge 2 |\xi^+||\eta^+||\xi^-||\eta^-| \ge 2 |\xi^+:\eta^+||\xi^-:\eta^-|\ge -2 ab.
\end{split}
\]
Hence $2(ce+df) +6(de+cf)\ge -16ab;$ therefore,
\[
D^2g(\xi)(\eta,\eta) =4(a+b)^2+16ab +2(ce+df) +6(de+cf) \ge 0.
\]
 This confirms the convexity of function $g$ when $k=-2.$

3. Let $k>4;$ we show that $g$ is not integral convex.  Let $B(\xi,\eta) = D^2g(\xi)(\eta,\eta),$  as given by the formula  $(\ref{g-12})_2$.
 Given $u=(u^1,u^2)\in W^{1,4}_0(\Omega;\R^2),$  let $\hat u=(u^2,u^1).$ Set
  \[
  \xi=Du=\begin{pmatrix}u^1_{x_1}&u^1_{x_2}\\u^2_{x_1}&u^2_{x_2}\end{pmatrix}\;\;\mbox{and}\;\; \eta=D\hat{u}=\begin{pmatrix}u^2_{x_1}&u^2_{x_2}\\u^1_{x_1}&u^1_{x_2}\end{pmatrix}.
  \]
Then $|\xi|=|\eta|$, $\det\eta=-\det \xi$,
   \[
  \xi:\eta=2 D u^1\cdot D u^2, \;\;  \cof \xi= \begin{pmatrix} u^2_{x_2}&-u^2_{x_1}\\-u^1_{x_2}&u^1_{x_1}\end{pmatrix},\;\;\mbox{and}\;\;  \eta:\cof \xi=0.
  \]
 Therefore,  $ B(Du,D\hat u)=H(Du),$   where $H$ is the function defined by
 \begin{equation}\label{eq-H}
  H(\xi)= 4|\xi|^4 +32 (\xi^1\cdot \xi^2)^2 -4k (\det\xi)^2 \quad \forall\,\xi=\begin{pmatrix}\xi^1\\\xi^2\end{pmatrix}\in \R^{2\times 2}.
  \end{equation}

Let $0<\epsilon_0<k-4$ be fixed. Consider the open set
 \[
 \mathcal U=\{\xi\in \R^{2\times 2} : |\xi|<\sqrt{2}+1, \; H(\xi)<-4\epsilon_0\}
 \]
and  the following four matrices:
  \[
 \xi_1 =\begin{pmatrix}1&0\\0&1\end{pmatrix}, \quad\xi_2=\begin{pmatrix}1&0\\0&-1\end{pmatrix},  \quad  \xi_3=-\xi_1,\quad \xi_4=-\xi_2.
 \]

Note that
\[
H(\xi_i)= 16-4k <- 4\epsilon_0\;\;\mbox{and}\;\; |\xi_i|=\sqrt2 \quad \forall\, i=1,2,3,4;
\]
thus, we have  $ \xi_i\in \U$ for $ i=1,2,3,4.
 $
Also, note that
\[
\rank(\xi_1-\xi_2)=\rank(\xi_3-\xi_4)=1\;\;\mbox{and}\;\; \rank\Big(\frac{\xi_1+\xi_2}{2}-\frac{\xi_3+\xi_4}{2}\Big)=1.
\]
This shows that
\begin{equation}\label{rc-hull-1}
0=\frac12\Big (\frac{\xi_1+\xi_2}{2}+\frac{\xi_3+\xi_4}{2}\Big )\in \U^{lc}\subset \U^{rc}.
\end{equation}
Here, we refer to \cite{D, DM1, MSv2} for the definitions and further properties of the {\em lamination convex hull} $K^{lc}$  and {\em rank-one convex hull} $K^{rc}$ of a general  set   $K\subset \R^{N\times n}$.

 Let $\delta>0$ be a number such that $\cup_{i=1}^4  B_\delta(\xi_i) \subset \U.$ Then, from (\ref{rc-hull-1}), by the convex integration lemma \cite[Theorem 3.1]{MSv2} (see also \cite{DM1}),  there exists  a piecewise affine  function $u\in W^{1,\infty}_0(\Omega;\R^2)$ such that
\[
Du(x)\in  \cup_{i=1}^4  B_\delta(\xi_i),  \quad \mbox{a.e.\,$x\in \Omega$}.
\]
Hence $H(Du(x))<- 4{\epsilon_0},$ a.e.$\,x\in \Omega.$ This shows that
\[
\int_\Omega D^2 g(Du)(D\hat u,D\hat u)\,dx= \int_\Omega B(Du,D\hat u)\,dx =\int_\Omega H(Du(x))\,dx\le - 4{\epsilon_0} |\Omega|.
\]

Finally, by the density of $C^\infty_0(\Omega;\R^2)$ in $W^{1,4}_0(\Omega;\R^2)$, there exist $\psi, \phi \in C^\infty_0(\Omega;\R^2)$ such that
\[
\int_\Omega D^2 g(D\psi)(D\phi,D\phi)\,dx\le \int_\Omega D^2 g(Du)(D\hat u,D\hat u)\,dx +\epsilon_0|\Omega| \le -3{\epsilon_0} |\Omega|<0.
\]
Therefore, by Proposition \ref{eq-condition},  $g$ is not integral convex.

4. Let $k<-2;$ we show that $g$ is not integral convex.    Given $u=(u^1,u^2)\in W^{1,4}_0(\Omega;\R^2),$  let $\psi=(u^1,0)$ and $ \phi=(0,u^2).$ Then
  \[
  \xi=D\psi=\begin{pmatrix}u^1_{x_1}&u^1_{x_2}\\0&0\end{pmatrix}\;\;\mbox{and}\;\; \eta=D\phi=\begin{pmatrix}0&0\\
  u^2_{x_1}&u^2_{x_2} \end{pmatrix}.
  \]
Hence $|\xi|=|D u^1|$, $|\eta|=|D u^2|$, $\det\xi=\det \eta=0,$ $\xi:\eta=0,$ and
   \[
 \eta:\cof \xi=u^1_{x_1}u^2_{x_2}-u^1_{x_2}u^2_{x_1}=\det Du.
 \]
 Therefore,
  $B(D\psi,D\phi)=G(Du),$   where $G(\xi)$ is the function defined by
 \begin{equation}\label{eq-G}
 G(\xi)= 4|\xi^1|^2|\xi^2|^2  + 2k (\det\xi)^2 \quad \forall\,\xi=\begin{pmatrix}\xi^1\\\xi^2\end{pmatrix}\in \R^{2\times 2}.
  \end{equation}

Let $0<\epsilon_0<-(k+2)$ be fixed. Consider a new open set
 \[
\U=\{\xi\in \R^{2\times 2} : |\xi|<\sqrt{2}+1, \; G(\xi)<-2\epsilon_0\}
 \]
and  the  same four matrices $\{\xi_1,\xi_2,\xi_3,\xi_4\}$ defined as in Step 3. Since  $
G(\xi_i)= 4+2k <- 2\epsilon_0,$ we have $ \xi_i\in \U,$ for all $ i=1,2,3,4.
 $

 Let $\delta>0$ be a number such that $\cup_{i=1}^4  B_\delta(\xi_i) \subset \U.$ Then, as in Step 3,   there exists  a piecewise affine  function $u=(u^1,u^2)\in W^{1,\infty}_0(\Omega;\R^2)$ such that
\[
Du(x)\in  \cup_{i=1}^4  B_\delta(\xi_i),  \quad \mbox{a.e.\,$x\in \Omega.$}
\]
Hence $G(Du(x))<- 2 {\epsilon_0},$ a.e.$\,x\in \Omega.$ Setting $\psi=(u^1,0)$ and $\phi=(0,u^2)$, we have
\[
\int_\Omega D^2 g(D\psi)(D\phi,D\phi)\,dx=\int_\Omega B(D\psi,D\phi)\,dx=\int_\Omega G(Du(x))\,dx\le - 2{\epsilon_0} |\Omega|,
\]
which, as in Step 3, proves that  $g$ is not integral convex.

 This completes the proof.
 \end{proof}

\subsection*{Acknowledgments} S. Kim was supported by the National Research Foundation of Korea (grant NRF-2022R1F1A1063379, RS-2023-00217116) and Korea Institute for Advanced Study(KIAS) grant funded by the Korea government (MSIP).

\subsubsection*{\emph{\textbf{Ethical Statement.}}} The manuscript has not been submitted to more than one journal for simultaneous consideration.  The manuscript has not been published previously.

\subsubsection*{\emph{\textbf{Conflict of Interest:}}}  The authors declare that they have no conflict of interest.

\end{document}